\documentclass[11pt]{aart}

\usepackage[letterpaper, hmargin=1in, top=1.1in, bottom=1.4in, footskip=0.7in]{geometry}

%Inline subsections
\usepackage{titlesec}
\titleformat{\subsection}[runin]{\normalfont\bfseries}{\thesubsection.}{.5em}{}[.]\titlespacing{\subsection}{0pt}{2ex plus .1ex minus .2ex}{.8em}
\titleformat{\subsubsection}[runin]{\normalfont\itshape}{\thesubsubsection.}{.3em}{}[.]\titlespacing{\subsubsection}{0pt}{1ex plus .1ex minus .2ex}{.5em}
\titleformat{\paragraph}[runin]{\normalfont\itshape}{\theparagraph.}{.3em}{}[.]\titlespacing{\paragraph}{0pt}{1ex plus .1ex minus .2ex}{.5em}

%Figure style
%\usepackage[labelfont=sc,font=small,labelsep=period]{caption}
%\setlength{\intextsep}{3em}
%\setlength{\textfloatsep}{3em}
%Command to import pdf_tex figure
%\newcommand{\fig}[1]{{\small \input{#1.pdf_tex}}}

%%%%%%%%%%%%%%%%    Package declarations    %%%%%%%%%%%%%%%%

\usepackage{amsmath}
\usepackage{amssymb}
\usepackage{amsfonts}
\usepackage{latexsym}
\usepackage{amsthm}
\usepackage{amsxtra}
\usepackage{amscd}
\usepackage{bbm}
\usepackage{mathrsfs}
\usepackage{bm}
%\usepackage{mathtools} %contains useful extra commands, such as \bigtimes

%%%%%%%%%%%%%%%%    Coloured comments    %%%%%%%%%%%%%%%%

\usepackage{graphicx, color}

%Commands for marking text in colour
\definecolor{darkred}{rgb}{0.9,0,0.3}
\definecolor{darkblue}{rgb}{0,0.3,0.9}
\definecolor{orange}{rgb}{0.98, 0.6, 0.01}

%Margin comment
%\usepackage{ifthen}
%\def\comment#1{\ifthenelse{\isodd{\value{page}}}{\marginpar{\raggedright\scriptsize{\textcolor{darkred}{#1}}}}{\marginpar{\raggedleft\scriptsize{\textcolor{darkred}{#1}}}}}  

%Hyperlinked references and citations
\definecolor{vdarkred}{rgb}{0.6,0,0.2}
\definecolor{vdarkblue}{rgb}{0,0.2,0.6}
\usepackage[pdftex, colorlinks, linkcolor=vdarkblue,citecolor=vdarkred]{hyperref}

%%%%%%%%%%%%%%%%    Citation and key options    %%%%%%%%%%%%%%%%

\usepackage[nottoc,notlof,notlot]{tocbibind}
\usepackage{cite} %Enabling `[1-4]` - style citing
%\usepackage{xspace} %command to insert a space after a latex command

%\usepackage[notref,notcite]{showkeys}

%%%%%%%%%%%%%%%%    Global style commands    %%%%%%%%%%%%%%%%

\flushbottom
\numberwithin{equation}{section}
\numberwithin{figure}{section}
%\interfootnotelinepenalty=10000 %do not allow LaTeX to break a footnote across multiple pages
% \newcommand{\defrm}[1]{{\fontseries{b}\selectfont#1}}
% \newcommand{\defit}[1]{{\fontseries{bx}\fontshape{it}\selectfont#1}}
%[1]{{\fontseries{bx}\selectfont#1}} %Normal, broader bold, which also works in italic
%\renewcommand{\bfdefault}{b} %Reduce spacing in bold fonts

%\renewcommand{\labelitemi}{\raisebox{0.2ex}{$\scriptstyle{\blacktriangleright}$}}

%\swapnumbers

%%%%%%%%%%%%%%%%    Theorem declarations    %%%%%%%%%%%%%%%%

\theoremstyle{plain} %plain, definition, remark
\newtheorem{theorem}{Theorem}[section]
\newtheorem*{theorem*}{Theorem}
\newtheorem{lemma}[theorem]{Lemma}
\newtheorem*{lemma*}{Lemma}

\newtheorem*{corollary*}{Corollary}
\newtheorem{proposition}[theorem]{Proposition}
\newtheorem*{proposition*}{Proposition}

\newtheorem*{conjecture*}{Conjecture}

\theoremstyle{definition} %plain, definition, remark
\newtheorem{definition}[theorem]{Definition}
\newtheorem*{definition*}{Definition}

\newtheorem*{example*}{Example}
\newtheorem{remark}[theorem]{Remark}
\newtheorem*{remark*}{Remark}
\newtheorem{assumption}[theorem]{Assumption}
\newtheorem*{assumption*}{Assumption}

%%%%%%%%%%%%%%%%    Character modifiers    %%%%%%%%%%%%%%%%

\newcommand{\f}[1]{\boldsymbol{\mathrm{#1}}} %bold
  %upright
 %sans serif
\newcommand{\bb}{\mathbb} %blackboard bold
\renewcommand{\cal}{\mathcal}

\newcommand{\ul}[1]{\underline{#1} \!\,} %underline
\newcommand{\ol}[1]{\overline{#1} \!\,} %overline

\newcommand{\wt}{\widetilde}

%%%%%%%%%%%%%%%%    Blackboard bold characters    %%%%%%%%%%%%%%%%

\renewcommand{\P}{\mathbb{P}}
\newcommand{\E}{\mathbb{E}}

\newcommand{\R}{\mathbb{R}}
\newcommand{\C}{\mathbb{C}}
\newcommand{\N}{\mathbb{N}}

%%%%%%%%%%%%%%%%    Special symbols    %%%%%%%%%%%%%%%%

\newcommand{\ii}{\mathrm{i}}
\newcommand{\dd}{\mathrm{d}}
\newcommand{\col}{\mathrel{\vcenter{\baselineskip0.75ex \lineskiplimit0pt \hbox{.}\hbox{.}}}}
\newcommand*{\deq}{\mathrel{\vcenter{\baselineskip0.65ex \lineskiplimit0pt \hbox{.}\hbox{.}}}=}
\newcommand*{\eqd}{=\mathrel{\vcenter{\baselineskip0.65ex \lineskiplimit0pt \hbox{.}\hbox{.}}}}

 %identity map
%\newcommand{\umat}{\mathbbmss{1}} %unit matrix
\renewcommand{\leq}{\leqslant}
\renewcommand{\le}{\leqslant}
\renewcommand{\geq}{\geqslant}
\renewcommand{\ge}{\geqslant}
\renewcommand{\epsilon}{\varepsilon}

%%%%%%%%%%%%%%%%    Parentheses    %%%%%%%%%%%%%%%%

\newcommand{\qq}[1]{[\![{#1}]\!]}

\newcommand{\ind}[1]{\f 1 (#1)}

\newcommand{\pb}[1]{\bigl({#1}\bigr)}
\newcommand{\pB}[1]{\Bigl({#1}\Bigr)}
\newcommand{\pbb}[1]{\biggl({#1}\biggr)}
\newcommand{\pBB}[1]{\Biggl({#1}\Biggr)}

\newcommand{\qb}[1]{\bigl[{#1}\bigr]}

\newcommand{\qbb}[1]{\biggl[{#1}\biggr]}
\newcommand{\qBB}[1]{\Biggl[{#1}\Biggr]}

\newcommand{\h}[1]{\{{#1}\}}
\newcommand{\hb}[1]{\bigl\{{#1}\bigr\}}

\newcommand{\abs}[1]{\lvert #1 \rvert}
\newcommand{\absb}[1]{\bigl\lvert #1 \bigr\rvert}

\newcommand{\norm}[1]{\lVert #1 \rVert}

\newcommand{\scalar}[2]{\langle{#1} \mspace{2mu}, {#2}\rangle}

%%%%%%%%%%%%%%%%    Mathematical operators    %%%%%%%%%%%%%%%%

%\DeclareMathOperator*{\slim}{s-lim}
%\DeclareMathOperator*{\wlim}{w-lim}
%\DeclareMathOperator*{\wstarlim}{w*-lim}

\DeclareMathOperator{\tr}{Tr}
\DeclareMathOperator{\var}{Var}

\DeclareMathOperator{\re}{Re}
\DeclareMathOperator{\im}{Im}
%\DeclareMathOperator{\dom}{\mathcal{D}}
%\DeclareMathOperator{\domq}{\mathcal{Q}}
%\DeclareMathOperator{\ran}{\mathcal{R}}
%\DeclareMathOperator{\keroperator}{\mathcal{N}}
%\renewcommand{\ker}{\keroperator}

%%%%%%%%%%%%%%%%    Custom commands    %%%%%%%%%%%%%%%%

\title{Isotropic self-consistent equations for mean-field random matrices}

\author{Yukun He\footnote{University of Geneva, Section of Mathematics, {\tt yukun.he@unige.ch}.} \and Antti Knowles\footnote{University of Geneva, Section of Mathematics, {\tt antti.knowles@unige.ch}.} \and Ron Rosenthal\footnote{Technion -  Israel Institute of Technology, Department of Mathematics, {\tt ron.ro@tx.technion.ac.il}.}}

\begin{document}

\maketitle

\begin{abstract}
We present a simple and versatile method for deriving (an)isotropic local laws for general random matrices constructed from independent random variables. Our method is applicable to mean-field random matrices, where all independent variables have comparable variances. It is entirely insensitive to the expectation of the matrix. In this paper we focus on the probabilistic part of the proof -- the derivation of the self-consistent equations. As a concrete application, we settle in complete generality the local law for Wigner matrices with arbitrary expectation.
\end{abstract}

\section{Introduction}

\subsection{Overview}
The empirical eigenvalue measure of a large random matrix is typically well approximated by a deterministic asymptotic measure. For instance, for Wigner matrices this measure is the celebrated Wigner semicircle law \cite{Wig}. This approximation is best formulated using the Green function and Stieltjes transforms. Let $W$ be an $N \times N$ Hermitian random matrix normalized so that its typical eigenvalue spacing is of order $N^{-1}$, and denote by
\begin{equation*}
G(z) \deq (W - z)^{-1}
\end{equation*}
the associated Green function. Here $z = E + \ii \eta$ is a spectral parameter with positive imaginary part $\eta$. Then the Stieltjes transform of the empirical eigenvalue measure is equal to $N^{-1} \tr G(z)$, and the approximation mentioned above may be written informally as
\begin{equation} \label{intr_avg_ll}
\frac{1}{N} \tr G(z) \approx m(z)
\end{equation}
for large $N$ and with high probability. Here $m(z)$ is the Stieltjes transform of the asymptotic eigenvalue measure, which we denote by $\varrho$. We call an estimate of the form \eqref{intr_avg_ll} an \emph{averaged law}.

As may be easily seen by taking the imaginary part of \eqref{intr_avg_ll}, control of the convergence of $N^{-1} \tr G(z)$ yields control of an order $\eta N$ eigenvalues around the point $E$. A \emph{local law} is an estimate of the form \eqref{intr_avg_ll} for all $\eta \gg N^{-1}$. Note that the approximation \eqref{intr_avg_ll} cannot be correct at or below the scale $\eta \asymp N^{-1}$, at which the behaviour of the left-hand side of \eqref{intr_avg_ll} is governed by the fluctuations of individual eigenvalues.

Local laws have become a cornerstone of random matrix theory, starting from the work \cite{ESY2} where a local law was first established for Wigner matrices. Local laws constitute the main tool needed to analyse the distribution of the eigenvalues of random matrices, including the universality of the local spectral statistics and the proof of the celebrated Wigner-Dyson-Mehta conjecture \cite{Meh}, and the eigenvectors of random matrices, including eigenvector delocalization and the distribution of the eigenvector components.

In fact, for the aforementioned applications to the distribution of eigenvalues and eigenvectors, the averaged local law from \eqref{intr_avg_ll} is not sufficient. One has to control not only the normalized trace of $G$ but the matrix $G$ itself, by showing that $G$ is close to some deterministic matrix, $M$, provided that $\eta \gg N^{-1}$. As the dimension $N$ of these matrices is very large, which notion of closeness to use is a nontrivial question. The control of $G - M$ as a matrix was initiated in \cite{EYY1} in the context of Wigner matrices, where the individual matrix entries $G_{ij} - M_{ij}$ are controlled. We call such a result an \emph{entrywise local law}. Subsequently, the entrywise local law of \cite{EYY1} has been refined and significantly generalized \cite{PY, AEK,AEK1,AEK2,AEK3,AEK4,Che,BES15,EYY2,EYY3,KY2,EKYY1,LeeSch,LSSY,EKYY4,BEKYY,EKYY3,KY4,BKY15,LS1}.

More generally, canonical notions are closeness in the \emph{weak operator sense}, \emph{strong operator sense}, and \emph{norm sense}, which entail control of
\begin{equation*}
\scalar{\f w}{(G - M) \f v} \,, \qquad
\abs{(G - M) \f v}\,, \qquad \norm{G - M}
\end{equation*}
respectively, for deterministic $\f v, \f w \in \C^N$. It is easy to see that already convergence in the strong operator sense must fail, as one typically has $\abs{(G - M)\f v} \asymp \frac{1}{\sqrt{N} \eta}$; we refer to \cite[Section 12.1]{BK16} for more details. We conclude that control in the weak operator sense, i.e.\  of  $\scalar{\f w}{(G - M) \f v}$, is the strongest form of control that one may expect to hold for the error $G - M$. Following \cite{KY2, BEKYY, KY4} we call such a result an \emph{(an)isotropic local law}. (An)isotropic local laws have played a central role in the study of deformed matrix ensembles \cite{KY2, BEKYY, KY4, KYY} and the distribution of eigenvector components of random matrices \cite{BoY1, KYY, BHY}.

All proofs of local laws consist of at least the two following major steps, which are completely decoupled.
\begin{enumerate}
\item[(A)]
\emph{A stochastic step} establishing a self-consistent equation for $G$ with a random error term.
\item[(B)]
\emph{A deterministic step} analysing the stability of the self-consistent equation.
\end{enumerate}
Step (A) may be formulated in general as follows. There is a map $\Pi \col \C^{N \times N} \to \C^{N \times N}$ such that $M$ is uniquely charaterized as the solution of the equation $\Pi(M) = 0$ with positive imaginary part. For a large class of random matrices, including those considered in this paper, $\Pi$ is a nonlinear quadratic function. Step (A) provides high-probability bounds on the matrix $\Pi(G)$ (in some suitable sense).

In Step (B) one shows that if $\Pi(G) = \Pi(G) - \Pi(M)$ is small then so is $G - M$.

In general, both steps are nontrivial. In addition, steps (A) and (B) have up to now only been used to derive an entrywise local law; to obtain a full (an)isotropic local law, a further nontrivial argument was necessary; three different techniques for deriving (an)isotropic laws from entrywise laws were developed in \cite{KY2,BEKYY,KY4}.

In this paper we develop a new method for dealing with Step (A), which has the following advantages as compared to previous approaches.
\begin{enumerate}
\item
It can handle very general mean-field random matrices $W$, and is in particular completely insensitive to the expectation $\E W$. (Previous methods heavily rely on the assumption that $\E W$ is diagonal or close to diagonal.)
\item
It automatically yields the (an)isotropic law in one step, without requiring a further intermediate step via an entrywise law, as in \cite{KY2,BEKYY,KY4}.
\item
It is simple and very versatile. It is applicable to many matrix models where previous methods either fail or are very cumbersome.
\end{enumerate}

For conciseness, in this paper we focus on mean-field Hermitian random matrices whose upper-triangular entries are independent. By \emph{mean-field} we mean that $\var (W_{ij}) = O(N^{-1})$ for all $i,j$. We make no assumption on $\E W$.
We remark that our method extends to very general matrices built from independent random variables, such as sample covariance matrices built from populations with arbitrary expectations and covariances.

We now outline our main results. Throughout the paper we split
\begin{equation*}
W = H + A \,, \qquad A \deq \E W\,.
\end{equation*}
We define the map
\begin{equation} \label{Pi_intro}
\Pi(M) \deq I + zM + \cal S(M) M  - AM \,, \qquad \cal S(M) \deq \E [HMH]\,.
\end{equation}
Then it is not hard to show that for $z \in \C_+$, the equation $\Pi(M) = 0$ has a unique solution $M$ with positive imaginary part (see Lemma \ref{lem:sol_Pi} below). Our main result (Theorem \ref{thm:main_general} below) is the derivation of the self-consistent equation $\Pi(G) \approx 0$: we establish high-probability bounds on the quantities
\begin{equation*}
\scalar{\f v}{\Pi(G) \f w}\,, \qquad \tr [B \Pi(G)] \,,
\end{equation*}
where $B$ is a bounded deterministic matrix and $\f v , \f w \in \C^N$ are deterministic.
These bounds are sharp (up to some technicalities in the definition of a high probability bound) throughout the spectrum, in both the bulk and in the vicinity of regular edges where the density of the limiting measure $\varrho$ exhibits the characteristic square-root decay of random matrix theory. We emphasize that we make no assumption on the expectation $A = \E W$.

Our main result performs Step (A) for the general class of mean-field random matrices introduced above. To illustrate how this result may be combined with Step (B) to complete the proof of the (an)isotropic local law, we use our main result to settle in complete generality the local law for Wigner matrices with arbitrary expectation (Theorem \ref{thm:Wigner} below), i.e.\ $\E \abs{H_{ij}}^2 = N^{-1}$ with arbitrary expectation $A$. Previously, the local law for Wigner matrices with nonzero expectation was derived under the assumption that $A$ is diagonal \cite{LSSY} or that all third moments of $H$ vanish \cite{KY4}. Even the averaged local law for general $A$ was not accessible using previous methods.

In this paper we do not perform Step (B) (the stability analysis of the self-consistent equation $\Pi(G) \approx 0$) for the aforementioned general class of mean-field matrices. For general $W$ this is a nontrivial task, and we refer to the works \cite{AEK1, AEK2, AEK3,AEK,AEK4}, where a related analysis is performed to obtain an entrywise law for the case of almost diagonal $A$.

We remark that the map $\Pi$ defined in \eqref{Pi_intro} has been extensively studied in the random matrix literature, starting with \cite{G01, PS11}, where its stability was analysed on the global spectral scale. Recently, it has been the focus of the seminal series of works \cite{AEK1, AEK2, AEK3,AEK,AEK4} mentioned above, where the stability of the matrix equation $\Pi(M) = 0$ was analysed in great detail in order to establish local laws for a general class of random matrix ensembles.

We conclude this introductory section with a few words about the proof. Traditionally, in all of the works \cite{PY, EYY1,AEK,AEK1,AEK2,AEK3,AEK4,Che,BES15,EYY2,EYY3,KY2,EKYY1,LeeSch,LSSY,EKYY4,BEKYY,EKYY3,KY4}, the proof of Step (A) for matrices with independent entries relied on Schur's complement formula. When applicable, Schur's complement formula provides a direct approach to proving Step (A). However, its applicability to random matrix models is heavily dependent on the independence of the entries of $H$ and on the fact that $A$ is diagonal. If the mapping between the independent random variables and the entries of $W$ is nontrivial or if $A$ is not diagonal, the use of Schur's complement formula becomes more cumbersome and possibly even fruitless. Moreover, Schur's complement formula is only effective when deriving entrywise estimates, and it is ill-suited for deriving isotropic estimates. The first proof of a local law without using Schur's complement formula was \cite{BKY15}, where a local law was established for random regular graphs (which are not built from independent random variables). In \cite{BKY15}, Schur's complement formula was replaced by a much more robust expansion using the resolvent identity. Our current approach is in part motivated by this philosophy: instead of working with entire rows and columns, as dictated by Schur's complement formula, we work with individual matrix entries. A natural way of achieving this is to perform a series of expansions using the resolvent identity, where $G$ is expanded in a single entry of $H$. We remark that such ideas have previously been used in the somewhat different context of proving the so-called fluctuation averaging result in random matrix theory; see \cite{EYY3}.

In practice, we choose to replace the resolvent expansion with a cumulant expansion inspired by \cite{KKP, HK, LSY} (see Lemma \ref{lem:cumulant_expansion} below), which effectively performs the same steps but in a more streamlined fashion. A drawback of the cumulant expansion is that it requires working with expectations of random variables instead of just random variables. This leads us to consider high moments of error terms, which are estimated self-consistently. An analogous strategy was first used in \cite{KY2} to prove the isotropic local law for Wigner matrices, although there the moments were estimated by the Green function comparison strategy instead of the cumulant expansion used in this paper. The first use of recursive self-consistent estimates for high moments using the cumulant expansion, including exploiting a cancellation among two leading terms, was \cite{HK}; this strategy constitutes the core of our method. The cumulant expansion was first used to derive a local law in the recent work \cite{LS1}, where a precise averaged law was derived for sparse random matrices with $A = 0$. Analogous ideas for the Haar measure on the unitary group were recently exploited in \cite{BLS} to obtain optimal bounds on the convergence rate for the spectral distribution of the sum of random matrices. Historically, the first use of the cumulant expansion for local spectral statistics is \cite{LSY}, where edge universality and the Tracy-Widom limit for deformed Wigner matrices was proved, along with a streamlined proof of the edge universality for Wigner matrices.

Aside from the novel idea of using the resolvent/cumulant expansion to prove (an)isotropic local laws for random matrices with independent entries, the key new ingredients of our method may be summarized as follows.
\begin{enumerate}
\item
We bootstrap isotropic bounds on the Green function, i.e.\ our proof makes use of a priori bounds of the form $\scalar{\f v}{G \f w} \lesssim \abs{\f v}\abs{\f w}$ with high probability. When using the cumulant expansion (or, alternatively, the resolvent expansion), we are naturally led to sums of the form $\sum_{i} v_i G_{ij} \cdots$. It is crucial to first perform the sum over $i$ to exploit the isotropic a priori bounds on $G$; simply estimating the sum term by term leads to errors which are not affordable. Reducing such sums to isotropic estimates on the Green function requires a certain amount of care.
\item
A direct application of the cumulant expansion to estimating high moments of the error results in terms that cannot be controlled with a high enough precision. To circumvent this issue, we use multiple self-improving matrix bounds at several stages of the proof. We assume a rough a priori bound on an error matrix and derive a self-improving bound on it, which may be iterated to obtain the optimal bound. See e.g.\ \eqref{D_self_improv} or \eqref{Q_self_improv} below for an example. For the estimate of $\tr [B \Pi(G)]$, we apply this approach to a matrix that we call $Q$, defined in \eqref{eq:defn_of_Q} below. The identification of $Q$ as a central object of the proof and the derivation of optimal bounds on it by means of a self-improving scheme obtained from a second cumulant expansion is a key idea of our proof.
\end{enumerate}

\subsubsection*{Conventions}
The central objects of this paper, such as the Green function $G$, depend on $N \in \N$ and the spectral parameter $z \in \C_+ \deq \{z \in \C \col \im z > 0\}$. We frequently omit both arguments, writing for instance $G$ instead of $G_N(z)$. If some quantity does not depend on $N$, we indicate this explicitly by calling it \emph{constant} or \emph{fixed}. We always use the notation $z = E + \ii \eta$ for the real and imaginary parts of $z$. We use the usual $O(\cdot)$ notation, and write $O_\alpha(\cdot)$ if the implicit constant depends on the parameter $\alpha$. The parameter $\alpha$ can never depend on $N$ or $z$.

We use the notation $\f v = (v_i)_{1\leq i \leq N} \in \C^N$ for vectors, $\langle\cdot,\cdot\rangle$ for the scalar product $\scalar{\f v}{\f w} \deq \sum_{i=1}^N \ol v_i w_i$ and $|\f v|=\sqrt{\langle \f v,\f v\rangle}$ for the Euclidean norm. We denote by $\norm{B}$ the Euclidean operator norm of an $N \times N$ matrix $B$. We write $\ul B \deq \frac{1}{N} \tr B$ for the normalized trace of $B$.

\subsubsection*{Acknowledgements}
We thank Torben Kr\"uger for drawing our attention to the importance of the equation \eqref{def_M} for general mean-field matrix models.
In a private communication, Torben Kr\"uger also informed us that the idea of organizing proofs of local laws by bootstrapping is being developed independently for random matrices with general correlated entries in \cite{EK}.

\subsection{Basic definitions}

We consider $N \times N$ matrices of the form $W = H + A \in \C^{N \times N}$, where $H = H^*$ is random and $A = A^*$ is deterministic. We always make the following assumption on $H$.
\begin{assumption}\label{ass:H}
	The upper-triangular entries $(H_{ij} \col 1\leq i\leq j\leq N)$ are independent mean-zero random variables satisfying $E[ \abs{\sqrt{N} H_{ij}}^p] = O_p(1)$ for all $i,j$ and $p \in \N$.
\end{assumption}

Let $z = E + \ii \eta \in \C_+$ be a spectral parameter, and define the Green function
\begin{equation*}
	G(z) \deq (H+A-z)^{-1}\,.
\end{equation*}
We use the following notion of high-probability bounds, the first version of which appeared in \cite{EKY2}.
\begin{definition}[Stochastic domination]\label{def:stocdom}$~$
\begin{enumerate}
	\item Let
		\begin{equation*}
			X = \pb{X^{(N)}(u) \col N \in \N, u \in U^{(N)}} \,, \qquad
			Y = \pb{Y^{(N)}(u) \col N \in \N, u \in U^{(N)}}
		\end{equation*}
		be two families of nonnegative random variables, where $U^{(N)}$ is a possibly 	$N$-dependent parameter set. 
		
		We say that $X$ is \emph{stochastically dominated by $Y$, uniformly in $u$,} if for all (small) $\epsilon > 0$ and (large) $D > 0$ we have
		\begin{equation*}
			\sup_{u \in U^{(N)}} \P \pB{X^{(N)}(u) > N^\epsilon Y^{(N)}(u)} \leq N^{-D}
		\end{equation*}
		for large enough $N\geq N_0(\epsilon, D)$. Throughout this paper the stochastic domination will always be uniform in all parameters (such as matrix indices, deterministic vectors, and spectral parameters $z$) that are not explicitly fixed. Note that $N_0(\epsilon, D)$ may depend on quantities that are explicitly constant.
	\item If $X$ is stochastically dominated by $Y$, uniformly in $u$, we use the notation $X \prec Y$. Moreover, if for some complex family $X$ we have $\abs{X} \prec Y$ we also write $X = O_\prec(Y)$. (Note that for deterministic $X$ and $Y$, $X=O_\prec(Y)$ means $X=O_\epsilon(N^{\varepsilon}Y)$ for any $\varepsilon>0$.)
	\item We extend the definition of $O_\prec(\,\cdot\,)$ to matrices in the weak operator sense as follows. Let $X$ be a family of complex $N \times N$ random matrices and $Y$ a family of nonnegative random variables. Then we write $X = O_\prec(Y)$ to mean $\abs{\scalar{\f v}{X \f w}} \prec \abs{\f v} \abs{\f w} Y$ uniformly for all deterministic vectors $\f v, \f w \in\C^N$.
	\end{enumerate}
\end{definition}

\subsection{Derivation of the self-consistent equation}

We use the notation $A \geq 0$ to mean that $A \in \C^{N \times N}$ is a positive semidefinite matrix, i.e.\ it is Hermitian and all of its eigenvalues are nonnegative.

\begin{definition} \label{def_S_F}
Define $\im M \deq \frac{1}{2 \ii} (M - M^*)$ and $\cal M_+ \deq \hb{M \in \C^{N \times N} \col \im M \geq 0}$. Let $\cal S \col \C^{N \times N} \to \C^{N \times N}$ be a linear map that maps $\cal M_+$ to itself. For $z \in \C_+$ define the function $\Pi \equiv \Pi(\cdot,z) \col \C^{N \times N} \to \C^{N \times N}$ through
\begin{equation*}
\Pi(M) \equiv \Pi(M,z) \deq I + zM + \cal S(M)M - AM\,.
\end{equation*}
\end{definition}

The following deterministic result was proved in \cite{HFS}.
\begin{lemma} \label{lem:sol_Pi}
Let $\cal S$ and $\Pi$ be as in Definition \ref{def_S_F}. Then for any $z \in \C_+$ the equation
\begin{equation} \label{def_M}
\Pi(M) = 0
\end{equation}
has a unique solution $M \in \cal M_+$. We denote this solution by $M(z)$.
\end{lemma}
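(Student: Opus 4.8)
The plan is to establish existence and uniqueness separately, treating the equation $\Pi(M) = 0$ as a fixed-point problem on the convex cone $\cal M_+$ (or rather its relevant subset). First I would rewrite \eqref{def_M} in resolvent form: since $\im z > 0$, the matrix $z - \cal S(M) + A$ is invertible whenever $\im M \geq 0$, because $\cal S$ maps $\cal M_+$ to itself and hence $\im(\cal S(M)) \geq 0$, so $\im(z - \cal S(M) + A) = \eta I + \im \cal S(M) \geq \eta I > 0$. Thus $\Pi(M) = 0$ is equivalent to the fixed-point equation
\begin{equation*}
M = \cal F(M) \deq -\pb{z + \cal S(M) - A}^{-1}\,,
\end{equation*}
and one checks directly that $\cal F$ maps the set $\cal M_+^{\le} \deq \h{M \in \cal M_+ \col \im M \geq \eta |M|^2 \text{ in the appropriate sense}, \norm{M} \leq \eta^{-1}}$ into itself — more precisely, that $\im \cal F(M) \geq 0$ and $\norm{\cal F(M)} \leq \eta^{-1}$, using the resolvent bound $\norm{(B - \ii C)^{-1}} \leq \norm{C^{-1}}$ for Hermitian $B$ and $C > 0$. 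This gives a nonempty, convex, compact (in the norm topology of the finite-dimensional space $\C^{N\times N}$) invariant set.

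For existence, I would then apply the Brouwer fixed-point theorem: $\cal F$ is continuous on $\cal M_+ \cap \{\norm{M} \leq \eta^{-1}\}$ (the inverse exists throughout by the argument above and depends continuously on $M$), it maps this compact convex set into itself, so it has a fixed point $M \in \cal M_+$. Alternatively, and perhaps more in the spirit of this literature, one can set up a Schur-type iteration $M_0 \deq -z^{-1}$, $M_{n+1} \deq \cal F(M_n)$, which stays in the invariant set; but Brouwer is cleaner since we do not need a contraction.

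For uniqueness, the standard device is to exploit the fact that any solution $M \in \cal M_+$ actually has $\im M \geq c(z) > 0$, hence $M$ has a bounded inverse. Concretely, from $\Pi(M) = 0$ one gets $M^{-1} = -(z + \cal S(M) - A)$, so $\im(M^{-1}) = -\eta I - \im \cal S(M) \leq -\eta I$, whence $\im M = -M^* \im(M^{-1}) M \geq \eta M^* M \geq 0$, and in particular $M$ is invertible with $\norm{M^{-1}} \leq $ (something finite) and $\norm M \leq \eta^{-1}$. Given two solutions $M_1, M_2 \in \cal M_+$, subtract the two equations in the form $M_i^{-1} = -(z + \cal S(M_i) - A)$ to get
\begin{equation*}
M_1^{-1} - M_2^{-1} = -\cal S(M_1 - M_2) = M_1^{-1}(M_1 - M_2) M_2^{-1}\,,
\end{equation*}
hence $M_1 - M_2 = -M_1\, \cal S(M_1 - M_2)\, M_2$, and one wants to conclude $M_1 = M_2$. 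The cleanest route is the one used in \cite{HFS}: take the "imaginary part pairing" $\scalar{\f v}{\im M_i \f v}$ and a positivity/monotonicity argument, or use that the linear operator $X \mapsto X + M_1 \cal S(X) M_2$ is invertible because $\norm{M_1} \norm{M_2} \|\cal S\|$ times an appropriate contraction factor coming from $\im z > 0$ is $< 1$ on the relevant subspace — this is exactly where the positivity of $\cal S$ enters, via $\scalar{X}{\cal S(X)}$-type estimates showing the linearized operator has no kernel.

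The main obstacle is the uniqueness part, specifically showing the linearized stability operator $1 + M_1 \cal S(\cdot) M_2$ is injective on $\C^{N \times N}$ (or on the relevant subspace of Hermitian matrices) without any smallness assumption — a naive norm bound $\norm{M_1}\norm{M_2}\norm{\cal S} < 1$ fails since $\norm{M_i}$ can be as large as $\eta^{-1}$. One resolves this by the same trick that proves uniqueness of the Dyson equation solution: pair against $\im M_1$ or use the Perron–Frobenius-type structure of the positivity-preserving map $\cal S$ together with the strict gain $\eta I \leq \im(-M_i^{-1})$, so that the spectral radius of the relevant operator is strictly less than $1$. Since this is precisely the content of the cited \cite{HFS}, in the write-up I would invoke it; but if a self-contained argument is wanted, the Brouwer step gives existence immediately and the imaginary-part pairing gives uniqueness in a few lines.
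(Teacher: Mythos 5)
The paper does not give a proof of this lemma at all: the sentence preceding it reads ``The following deterministic result was proved in \cite{HFS},'' and no argument follows. Your proposal is therefore a supplement to a bare citation rather than a competitor to an in-paper proof, and as such it is sound. The reformulation $\Pi(M)=0 \Longleftrightarrow M=\cal F(M)\deq -(z+\cal S(M)-A)^{-1}$ is correct, and the Brouwer-based existence step is valid: since $\cal S$ maps $\cal M_+$ to itself and $A=A^*$, one has $\im\pb{z+\cal S(M)-A}\geq\eta I>0$ for every $M\in\cal M_+$, so $\cal F$ is a well-defined continuous self-map of the nonempty, convex, compact set $\cal M_+\cap\hb{\norm{M}\leq\eta^{-1}}$, and a fixed point exists. (There is a small sign typo near the start where you write ``$z-\cal S(M)+A$''; the signs are flipped relative to what you correctly use in the definition of $\cal F$.) You also correctly identify the genuinely delicate part as uniqueness---showing the linearized operator $X\mapsto X+M_1\cal S(X)M_2$ has trivial kernel with no smallness available, since $\norm{M_i}$ can be as large as $\eta^{-1}$---and you rightly note that this is exactly what \cite{HFS} establish by a positivity argument, proposing to cite it. In effect both you and the authors defer to \cite{HFS}; your added existence sketch is a reasonable reconstruction of that reference's content.
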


We now choose $\cal S$ to be
\begin{equation} \label{def_S_gen}
\cal S(M) \deq \E [H M H]\,.
\end{equation}
Fix a constant $\tau > 0$ and define the fundamental domain
\begin{equation} \label{def_S}
	\f D \equiv \f D_N(\tau) \deq \hb{E+\mathrm{i}\eta\col |E|\le \tau^{-1}\,,\, N^{-1+\tau}\le \eta\le \tau^{-1}}\,.
\end{equation}
Our first main result gives optimal high-probability bounds on $\Pi(G)$.

\begin{theorem}[Self-consistent equation for $G$] \label{thm:main_general}
Suppose that Assumption \ref{ass:H} holds. Denote by $M \in \cal M_+$ the solution of \eqref{def_M} with respect to the linear map \eqref{def_S_gen}. Let $z \in \f D$ and suppose that $\norm{M} = O(1)$ and $G - M = O_\prec(\phi)$ for some deterministic $\phi \in [N^{-1}, N^{\tau/10}]$ hold at $z$. Then the following estimates hold at $z$.
\begin{enumerate}
\item
We have
\begin{equation*}
\Pi(G) = O_\prec \pBB{(1 + \phi)^3 \sqrt{\frac{\|\im M\| + \phi+\eta}{N \eta}}\,}\,.
\end{equation*}
\item
For any deterministic $B \in \C^{N \times N}$ satisfying $\norm{B} = O(1)$ we have
\begin{equation*}
	\underline{B \Pi(G)}=O_{\prec}\bigg((1+\phi)^6 \frac{\|\im M\| + \phi+\eta}{N \eta}\bigg)\,.
\end{equation*}
\end{enumerate}
\end{theorem}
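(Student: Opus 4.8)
The plan is to start from the cumulant expansion (Lemma~\ref{lem:cumulant_expansion}, referenced in the introduction) applied to the defining identity $(H+A-z)G = I$, which in entrywise form reads $z G_{ij} + \delta_{ij} = (HG)_{ij} + (AG)_{ij}$. Taking the term $(HG)_{ij} = \sum_k H_{ik}G_{kj}$ and expanding each $H_{ik}$-factor via the cumulant expansion, the second-cumulant term reproduces exactly $-\cal S(G)G$ plus the terms in $\Pi$, so that after rearranging one obtains $\Pi(G)$ equated to a sum of higher-cumulant terms, each of which carries a factor $N^{-p/2}$ from the $p$-th cumulant of $H_{ik}$ together with $p$ derivatives of $G$ (each derivative $\partial_{h_{ik}} G$ producing a factor like $-G_{\cdot i}G_{k\cdot}$ or its symmetrization). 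For the isotropic bound (i), I would contract the free indices against deterministic vectors $\f v, \f w$ and estimate $\scalar{\f v}{\Pi(G)\f w}$ directly: the crucial point, as flagged in the introduction, is to perform the internal summations \emph{before} bounding, so that sums of the form $\sum_i v_i G_{\cdot i}$ are controlled by the a priori isotropic bound $G = M + O_\prec(\phi)$ rather than term by term. The $\ell=2$ (i.e.\ third-cumulant) term is the leading one and produces the $\sqrt{(\|\im M\|+\phi+\eta)/(N\eta)}$ size after using the Ward identity $\sum_j |G_{ij}|^2 = (\im G_{ii})/\eta$ together with $\im G = \im M + O_\prec(\phi)$ and $\|\im M\| = O(1)$; higher-cumulant terms are smaller and absorbed into the $(1+\phi)^3$ prefactor.

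The main technical work — and the main obstacle — is part (ii), the bound on $\ul{B\Pi(G)}$, which must be of \emph{size square} compared to (i) because of the extra averaging. A naive application of the cumulant expansion to $\tr[B\Pi(G)]$ does not gain the full square: the introduction explicitly warns that ``a direct application of the cumulant expansion \dots results in terms that cannot be controlled with a high enough precision''. The resolution, as announced, is to identify the auxiliary matrix $Q$ defined in \eqref{eq:defn_of_Q} (morally the ``fluctuating part'' of $G$ or of $\Pi(G)$ after one expansion step), assume a rough a priori bound on it, and run a \emph{second} cumulant expansion to derive a self-improving inequality of the schematic form $\|Q\| \prec (\text{target}) + (\text{small})\cdot\|Q\|$, which upon iteration (finitely many times, since $\phi \le N^{\tau/10}$ keeps all error exponents under control) collapses to the optimal bound. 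Feeding this optimal bound on $Q$ back into the expression for $\ul{B\Pi(G)}$ then yields the claimed $(1+\phi)^6 (\|\im M\|+\phi+\eta)/(N\eta)$.

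In carrying this out I would organize the estimates around high moments: fix an even integer $p$ and bound $\E\,|\scalar{\f v}{\Pi(G)\f w}|^p$ (resp.\ $\E\,|\ul{B\Pi(G)}|^p$) by expanding one factor via cumulants, which couples to the remaining $p-1$ factors and their complex conjugates; the derivatives landing on those factors produce again Green-function entries contracted with $\f v,\f w$ (or with $B$), and one closes the estimate self-consistently in $p$. The bookkeeping is in tracking, for each resulting monomial, (a) how many factors of $\phi$ it carries, (b) how many ``off-diagonal'' resolvent chains it contains that can be summed isotropically, and (c) how many $\eta^{-1}$ factors are produced and whether each is compensated by a Ward identity yielding $\im M + \phi + \eta$. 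The Ward identity $G - G^* = 2\ii\,\eta\, G^*G$ (equivalently $\sum_k G_{ik}\ol{G_{jk}} = (G-G^*)_{ij}/(2\ii\eta)$) is the single most-used tool and is what converts each dangerous $1/\eta$ into $\|\im M\|/(\eta)$ after invoking $\im G = \im M + O_\prec(\phi)$; I would state it as a preliminary lemma.

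I expect the genuinely hard part to be isolating the cancellation between the two leading terms in the second expansion (the mechanism behind the self-improvement for $Q$): one must show that the ``bad'' contribution of order $\phi/(N\eta)^{1/2}$ appearing in the first expansion is actually paired, after a further expansion step, with a term of opposite sign, so that only the square-size remainder survives. Getting the combinatorics of this pairing right — matching the right resolvent chains and verifying the signs and index structures agree — is where the proof is most delicate; everything else is careful but routine power counting using $\|M\| = O(1)$, $G - M = O_\prec(\phi)$, the Ward identity, and $\norm{B} = O(1)$.
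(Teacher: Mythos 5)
Your plan matches the paper's strategy in all essentials: expand $HG = I + zG - AG$ by cumulants, estimate $\scalar{\f v}{\Pi(G)\f w}$ and $\ul{B\Pi(G)}$ through high moments $\E|\cdot|^{2p}$, close the estimates self-consistently with the derivative falling on the companion factors, use the Ward identity together with $\im G = \im M + O_\prec(\phi)$ to convert each $\eta^{-1}$ into the $\zeta^2 = (\|\im M\|+\phi+\eta)/(N\eta)$ size, and — for part (ii) — introduce the auxiliary matrix $Q$ of \eqref{eq:defn_of_Q}, derive a self-improving bound on it via a second cumulant expansion exploiting a cancellation, and feed the optimal $Q$-bound back into the estimate for $\ul{B\Pi(G)}$. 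This is precisely how the paper proceeds (Propositions~\ref{prop:est1}--\ref{prop:est2}, Lemmas~\ref{prop:main_estimations}, \ref{prop:main_estimations_averaging}, \ref{lem:estimation_of_Q}), with the self-improving iteration formalized as in Lemma~\ref{lem:self-improv}.

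Two details worth correcting. First, the paper does not run the high-moment scheme on $\Pi(G)$ itself, but first subtracts the small piece $\cal J_{\f v\f w} = N^{-1}\sum_j G_{j\f v^j}G_{j\f w}$ of $\cal S(G)G$ (which is $O_\prec((1+\phi)\zeta)$ by Ward) and works with $\cal D = \Pi(G) - \cal J = HG + \cal K$. This is an organizational device — it localizes the key second-cumulant cancellation so that $\E[\cal K\,\cal D^{p-1}\ol{\cal D}^p] + X_1$ produces only the small $-\cal J$ rather than forcing an exact cancellation — but it also makes Lemma~\ref{3.1} (a priori bounds on $\partial_{ij}^l\cal D$) cleaner. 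Your proposal omits this reduction; running directly on $\Pi(G)$ is workable but bookkeeping-heavier. Second, your claim that ``the $\ell=2$ (third-cumulant) term is the leading one'' is not how the paper's bound is assembled. After the cancellation, the size $\zeta$ (and $\wt\zeta = (1+\phi)^3\zeta$) already appears in the \emph{second}-cumulant contributions: both from the residual $\cal J$ (first term of Lemma~\ref{prop:main_estimations}~(i)) and from the second-cumulant term where the derivative lands on $\cal D^{p-1}\ol{\cal D}^p$ (the $\wt\zeta\lambda\cdot\E|\cal D|^{2p-2}$ term). The higher cumulants $X_k$, $k\geq 2$, are shown to contribute the \emph{same} order at the fixed point of the self-improving scheme, not a strictly larger one; misattributing the leading size to the third cumulant could lead you to under-scrutinize the second-cumulant terms, which are in fact the most delicate (they contain the crucial cancellation). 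Neither issue is a flaw in the overall approach, but both would need to be set right when writing out the argument.
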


\begin{remark}
The expression $\ul {B \Pi(G)}$ is the most general linear function of $\Pi(G)$, in the sense that any linear function $\Phi \col \C^{N \times N} \to \C$ can be written in the form $\Phi(\Pi) = \ul {B \Pi}$ for some $B \in \C^{N \times N}$.
\end{remark}

\subsection{Application to Wigner matrices}

We make the following assumption about Wigner matrices.
\begin{assumption} \label{ass:Wigner}
	For all $1\leq i,j\leq N$ we have $\E |H_{ij}|^2=N^{-1} (1 + O(\delta_{ij}))$.
\end{assumption}

For Wigner matrices satisfying Assumption \ref{ass:Wigner}, instead of \eqref{def_S_gen} it is more convenient to use the slightly different expression
\begin{equation} \label{def_S_wig}
\cal S(M) \deq m I\,, \qquad m \deq \ul M\,.
\end{equation}
It is immediate that, under the Assumption $\norm{M} = O(1)$, the entries of the quantities \eqref{def_S_gen} and \eqref{def_S_wig} differ by $O(N^{-1})$.

The equations \eqref{def_M} and \eqref{def_S_wig} have been studied in detail in the literature on deformed Wigner matrices. They may be rewritten as
\begin{equation*}
M = (A - z - m)^{-1}\,,
\end{equation*}
where $m$ solves the equation
\begin{equation} \label{eq_spectr_A}
m = \int \frac{\nu(\dd a)}{a - m - z}
\end{equation}
and
\begin{equation} \label{def_nu}
\nu \deq \frac{1}{N} \sum_{a \in \mathrm{spec}(A)} \delta_{a}
\end{equation}
is the empirical spectral measure of $A$. The function $m$ is a holomorphic function from $\C_+$ to $\C_+$, and we find immediately that $z m(z) \to -1$ as $z \to \infty$ in $\C_+$. By the integral representation of Nevanlinna functions, we conclude that $m$ is the Stieltjes transform of a probability measure $\varrho$ on $\R$:
\begin{equation} \label{m_varrho}
m(z) = \int \frac{\varrho(\dd x)}{x - z}\,.
\end{equation}
We remark that the measure $\varrho$ is the \emph{free additive convolution} of $\nu$ and the semicircle law $\mu$, written $\varrho=\mu \boxplus \nu$. Fore more details, we refer to \cite{VDN} for a review of free probability theory. For recent results on the local behaviour of the free additive convolution, we refer to \cite{LeeSch,LSSY}.

Define the Stieltjes transform of the empirical spectral measure of $H+A$ as
\begin{equation*}
g \deq \ul G\,.
\end{equation*}
\begin{definition}
We call a subset $\f S \equiv \f S_N \subset \f D$ a \emph{spectral domain} if for each $z \in \f S$ we have
\begin{equation*}
\h{w \in \f D \col \re w = \re z, \im w \geq \im z} \subset \f S\,.
\end{equation*}
\end{definition}

\begin{theorem} \label{thm:Wigner}
Suppose that Assumptions \ref{ass:H} and \ref{ass:Wigner} hold and that $\norm{A} = O(1)$. Let $M \in \cal M_+$ and $m$ be the solutions of \eqref{def_M} and \eqref{def_S_wig}.
Let $\f S$ be a spectral domain. Suppose that $\norm{M} = O(1)$ on $\f S$ and that \eqref{eq_spectr_A} is stable on $\f S$ in the sense of Definition \ref{def:stability} below. Then we have for $z \in \f S$
\begin{equation} \label{G-M_Wig}
G - M = O_\prec\pBB{\sqrt{\frac{\im m}{N \eta}} + \frac{1}{N \eta}}
\end{equation}
and
\begin{equation} \label{g-m_Wig}
g - m = O_\prec\pbb{\frac{1}{N \eta}}\,.
\end{equation}
\end{theorem}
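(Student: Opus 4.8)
The plan is to combine Theorem~\ref{thm:main_general} (Step~(A)) with the stability hypothesis of Definition~\ref{def:stability} (Step~(B)) in a bootstrap along decreasing $\eta$. We use the choice \eqref{def_S_wig} for $\cal S$; it differs from \eqref{def_S_gen} by entrywise $O(N^{-1})$, which changes $M$ and, using the isotropic a priori bound on $G$, the weak-operator and trace values of $\Pi(G)$, by amounts of lower order than the bounds below. With \eqref{def_S_wig} we have $\Pi(M)=I+(z+m-A)M$, so $M=(A-z-m)^{-1}$ and $A-z-m=M^{-1}$; writing $g\deq\ul G$ and using that $\Pi$ is quadratic, a short computation gives the exact identity
\begin{equation*}
\Pi(G)=-M^{-1}(G-M)+(g-m)\,G\,,\qquad\text{equivalently}\qquad G-M=(g-m)\,MG-M\,\Pi(G)\,.
\end{equation*}
Taking $\ul{\,\cdot\,}$ yields the scalar relation $(g-m)\pb{1-\ul{MG}}=-\ul{M\,\Pi(G)}$, and since $\ul{MG}=\ul{M^2}+\ul{M(G-M)}$ with $\ul{M(G-M)}=O_\prec(\phi)$ (summing the isotropic bound over the standard basis), the coefficient is $1-\ul{MG}=\pb{1-\ul{M^2}}+O_\prec(\phi)$. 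Because \eqref{eq_spectr_A} is precisely $m=\ul M$, the stability of Definition~\ref{def:stability} controls the inverse of $1-\ul{M^2}$ (with the usual cubic correction at a regular edge), so it converts the smallness of $\ul{M\,\Pi(G)}$ -- and of $\scalar{\f v}{\Pi(G)\f w}$ in the vector identity -- into bounds on $g-m$ and on $G-M$, once $\phi$ is below the threshold implicit in that definition.

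Next I would run the bootstrap at a fixed $z\in\f S$. Assume $G-M=O_\prec(\phi)$ for some deterministic $\phi$ below that threshold. Feeding $\phi$ into Theorem~\ref{thm:main_general}, using $\norm{\im M}=(\eta+\im m)\norm{M}^2\lesssim\eta+\im m$ and $\im m\le\norm{M}=O(1)$, and applying stability, part~(ii) (with $B=M$, deterministic, $\norm{M}=O(1)$) gives
\begin{equation*}
g-m=O_\prec\!\pB{(1+\phi)^6\,\tfrac{\im m+\eta+\phi}{N\eta}}\,,
\end{equation*}
and then $G-M=(g-m)MG-M\,\Pi(G)$ together with part~(i) (and $\norm{MG}=O_\prec(1)$) gives $G-M=O_\prec(\phi')$ with $\phi'\deq(N\eta)^{-1}+(1+\phi)^3\sqrt{(\im m+\eta+\phi)/(N\eta)}$. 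Because $\eta\ge N^{-1+\tau}$ and $\phi\le N^{\tau/10}$, the map $\phi\mapsto\phi'$ first brings $\phi$ below $1$ and then contracts, $\phi'\le\tfrac12\phi+C\psi$ with $\psi\deq(N\eta)^{-1}+\sqrt{(\im m+\eta)/(N\eta)}$; iterating a bounded number of times gives $G-M=O_\prec(\psi)$. Using $\im m\gtrsim\eta$ on $\f D$ (valid since $\norm{A}=O(1)$ forces $\supp\varrho$ bounded) one has $\psi\lesssim\sqrt{\im m/(N\eta)}+(N\eta)^{-1}$, which is \eqref{G-M_Wig}; inserting $\phi\asymp\psi$ and $\im m=O(1)$ into the bound for $g-m$ gives \eqref{g-m_Wig}.

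To upgrade this pointwise statement to all of $\f S$, I would use the standard continuity argument up each vertical segment $\hb{w\in\f S\col\re w=\re z}$ of the spectral domain: at the top $\eta\asymp\tau^{-1}$ the equation is globally stable and a crude bound below the bootstrap threshold holds; the maps $z\mapsto G(z)$ and $z\mapsto M(z)$ are Lipschitz with constant $O(\eta^{-2})\le N^2$, so a valid bound at one point propagates to a neighbouring point of an $N^{-10}$-net, where the bootstrap reinstates $\phi\asymp\psi$. A union bound over the polynomially many net points and over the parameters $\epsilon,D$ of stochastic domination then completes the proof, using that all inputs are uniform in $z\in\f S$.

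The main obstacle is closing the bootstrap quantitatively: one must verify that the amplifying powers $(1+\phi)^3$, $(1+\phi)^6$ produced by Theorem~\ref{thm:main_general} cannot defeat the gain in $\phi$ -- this is exactly where $\eta\ge N^{-1+\tau}$, $\phi\le N^{\tau/10}$, and the polynomial form of the stability bound enter -- and, near a regular edge, that the cubic version of the stability estimate is invoked correctly so that the factor $1/(1-\ul{M^2})$ does not spoil the rate $(N\eta)^{-1}$. The replacement of \eqref{def_S_gen} by \eqref{def_S_wig}, harmless in the weak-operator and trace senses but not in operator norm, is a further routine point that again rests on the isotropic a priori control of $G$.
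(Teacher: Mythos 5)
Your overall architecture matches the paper's: apply Theorem~\ref{thm:main_general} (Step~A), feed the output into the stability of \eqref{eq_spectr_A} (Step~B), and close a bootstrap that propagates down in $\eta$ via continuity along vertical lines, with a net argument and union bound. The reduction from \eqref{def_S_gen} to \eqref{def_S_wig}, the bounds $\norm{\im M}=O(\im m)$ and $\im m\gtrsim\eta$, and the final iteration scheme are all as in the paper. The algebra you use is slightly different -- you linearize around the deterministic $M=R_m$, writing $G-M=(g-m)MG-M\Pi(G)$, whereas the paper writes $G=R_g-R_g\Pi(G)$ and $G-M=R_gR_m(g-m)-R_g\Pi(G)$ with $R_g\deq(A-g-z)^{-1}$. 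Your version has the advantage that $B=M$ is deterministic, so Theorem~\ref{thm:main_general}~(ii) applies directly, while the paper has to insert an $N^{-3}$-net argument (its Lemma~\ref{lem:Rg_est}) to handle the randomness of $R_g$. The price is that your scalar identity controls $\ul{M\Pi(G)}$ and $1-\ul{MG}$ rather than $\pi(g)=g-\ul{R_g}$, which is the quantity that Definition~\ref{def:stability} actually takes as input; to invoke that definition literally you still need to pass from $1-\ul{MG}$ to $1-\ul{MR_g}$, and the correction is $(g-m)\ul{MR_g\Pi(G)}$, which again involves $R_g$. This is fixable but it means the net argument is not really avoided, only relocated.

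There is one genuine flaw in the bootstrap step: you claim that ``applying stability\dots\ gives $g-m=O_\prec\pb{(1+\phi)^6\frac{\im m+\eta+\phi}{N\eta}}$'', i.e.\ the bound on $g-m$ is the same order as the bound $\xi$ on $\ul{M\Pi(G)}$. That is not what Definition~\ref{def:stability} yields. Stability converts $\abs{\pi(g)}\le\xi$ into $\abs{g-m}\le C\xi/(\im m+\sqrt{\xi})$, and near a regular edge one has $\im m\ll1$ and $\xi\ll1$, so $\im m+\sqrt\xi\ll1$ and the stability bound $\xi/(\im m+\sqrt\xi)\approx\sqrt\xi$ is \emph{larger} than $\xi$. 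Your stated intermediate bound on $g-m$ is therefore too strong at the edge and is not derivable from the hypothesis. This is exactly why the paper runs the contraction in the form $\theta\mapsto\frac{1}{N\eta}+\sqrt{\theta/(N\eta)}$ rather than $\theta\mapsto\xi(\theta)$; the extra square root slows the iteration but keeps every step honest. You flag the issue in your last paragraph (``the cubic version of the stability estimate is invoked correctly'') but the displayed bound does not incorporate it, and as written the contraction argument rests on an unavailable estimate. The fix is to replace your claimed implication by $\abs{g-m}\prec\frac{1}{N\eta}+\sqrt{\theta/(N\eta)}$, which still converges in a bounded number of iterations to $O_\prec((N\eta)^{-1})$.
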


\begin{remark}
The assumptions that $\norm{M} = O(1)$ on $\f S$ and that \eqref{eq_spectr_A} is stable on $\f S$ in the sense of Definition \ref{def:stability} only pertain to the deterministic measure $\nu$ from \eqref{def_nu}, and have been extensively studied in the literature, starting with the work \cite{LeeSch}. For instance, a sufficient condition is that
\begin{equation*}
\inf_{x \in I_\nu} \int \frac{\nu (\dd a)}{(x - a)^2} \geq 1 + c
\end{equation*}
for some constant $c > 0$, where $I_\nu$ is the smallest closed interval containing the support of $\nu$. See \cite{LSSY} for more details.
\end{remark}

\begin{remark}
In Theorem \ref{thm:Wigner}, we make the assumption $\norm{A} = O(1)$ for convenience in order to simplify the analysis of the self-consistent equation \eqref{eq_spectr_A}. This assumption can be easily relaxed; in particular, it is not needed to apply Theorem \ref{thm:main_general}.
\end{remark}

\begin{remark}
A corollary of Theorem \ref{thm:Wigner} is that the Wigner-Dyson-Mehta conjecture holds for Wigner matrices with arbitrary expectation. In other words, $H+A$ satisfying the assumptions of Theorem \ref{thm:Wigner} has universal (sine kernel) local spectral statistics. This follows immediately by combining Theorem \ref{thm:Wigner} with \cite{LSSY}.
\end{remark}

\section{Preliminaries}

The rest of the paper is devoted to the proofs of Theorems \ref{thm:main_general} and \ref{thm:Wigner}. To simplify notation, from now on we only consider the case where all matrix and vector entries are real. The complex case is a simple extension, whose details are given in Appendix \ref{sec:complex}.

In this section we collect notation and various tools which are used throughout the paper. 

\paragraph{Additional notation}

Let  $\qq{n} \deq \{1, 2, \dots, n\}$ and denote by $\bb S \deq \h{\f v \in \R^N \col \abs{\f v} = 1}$ the unit sphere of $\R^N$. For an $N \times N$ matrix $X \in \R^{N \times N}$, vectors $\f v, \f w \in \R^N$, and $i,j \in \qq{N}$, we use the notations
\begin{equation*}
	X_{\f v \f w} \deq \scalar{\f vX}{ \f w}\,, \qquad X_{i \f w} \deq \scalar{\f e_iX}{ \f w}\,, \qquad X_{\f v j} \deq \scalar{\f vX}{ \f e_j} \,, \qquad
X_{ij} \deq \scalar{\f e_iX}{ \f e_j}\,, 
\end{equation*}
where $\f e_i$ is the standard $i$-th basis vector of $\R^N$. We use the abbreviation $G^k_{\f v \f w} \deq (G_{\f v \f w})^k$ for $k \in \N$.

\paragraph{Basic properties of stochastic domination}

The following lemma collects some of the basic properties of stochastic domination. We use the lemma tacitly throughout the paper.

\begin{lemma}[Basic properties of $\prec$]\label{lem:basic_properties_of_prec} $~$
\begin{enumerate}
	\item Suppose that $X(v) \prec Y (v)$ for all $v\in V$. If $|V|\leq N^C$ for some constant $C$ then $\sum_{v\in V}X(v)\prec \sum_{v\in V}Y(v)$.
	\item Suppose that $X_1 \prec Y_1$ and $X_2 \prec Y_2$. Then $X_1 X_2 \prec Y_1 Y_2$.
	\item Suppose that $X \le N^C$ and $Y \ge N^{-C}$ for some constant $C>0 $. Then $X \prec Y$ implies $\E[X] \prec \E[Y]$.
\end{enumerate}
If the above random variables depend on some parameter $u$ and the hypotheses are uniform in $u$ then so are the conclusions.
\end{lemma}

\begin{proof}
	The proof follows from the definition of stochastic domination together with a union bound argument. 
\end{proof}

\begin{lemma} \label{lem:H_bound}
Under Assumption \ref{ass:H} we have $\norm{H} \prec 1$.
\end{lemma}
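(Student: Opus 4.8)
The plan is to bound $\norm{H}$ via a high-moment estimate combined with Markov's inequality, which is the standard route to $\norm H \prec 1$ under moment assumptions. Concretely, for any even integer $p$ one has $\norm{H}^p \leq \tr(H^p) = \sum_{i_1,\dots,i_p} H_{i_1 i_2} H_{i_2 i_3} \cdots H_{i_p i_1}$, so that $\E \norm{H}^p \leq \sum_{i_1,\dots,i_p} \E[H_{i_1 i_2} \cdots H_{i_p i_1}]$. The point is to show that the right-hand side is bounded by $N^{o(1)}$ uniformly in $p$ fixed, since then Markov gives $\P(\norm H > N^\epsilon) \leq N^{-\epsilon p} \E \norm{H}^p \leq N^{-\epsilon p + o(1)}$, which can be made smaller than any $N^{-D}$ by taking $p = p(\epsilon, D)$ large; this is exactly the form required by Definition \ref{def:stocdom}.

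The main combinatorial input is the classical moment method for Wigner-type matrices. First I would observe that, since the upper-triangular entries are independent and mean-zero (Assumption \ref{ass:H}), a term $\E[H_{i_1 i_2} \cdots H_{i_p i_1}]$ vanishes unless every edge $\{i_k, i_{k+1}\}$ (indices cyclic) of the closed walk $i_1 \to i_2 \to \cdots \to i_p \to i_1$ on the vertex set $\qq N$ is traversed at least twice. A standard counting argument then shows that the number of such walks with a given number $r$ of distinct vertices is $O_p(N^r)$ with $r \leq p/2 + 1$, while each nonzero expectation is bounded in absolute value by $O_p(N^{-p/2})$ using the moment bound $\E[\abs{\sqrt N H_{ij}}^q] = O_q(1)$ and Hölder's inequality to split the product into single-entry moments. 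Hence $\E \norm{H}^p \leq \sum_{i_1,\dots,i_p} \absb{\E[H_{i_1 i_2} \cdots H_{i_p i_1}]} = O_p(N^{p/2+1} \cdot N^{-p/2}) = O_p(N)$, which suffices: $\P(\norm H > N^\epsilon) \leq N^{-\epsilon p} O_p(N)$, and choosing $p$ with $\epsilon p > D + 1$ yields the bound $N^{-D}$ for large $N$.

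Since $\norm H \prec 1$ is such a routine fact and since the precise constant in the polynomial bound on $\E\norm H^p$ is irrelevant (any power of $N$ works, because $\epsilon p$ can be taken arbitrarily large), I would not belabor the combinatorics: it is enough to invoke the standard closed-walk counting and cite a reference (e.g.\ the moment method as in \cite{EKY2} or any textbook treatment), or simply note that the bound $\E\norm H^p = O_p(N^{C})$ for \emph{some} constant $C$ is classical and follows from $\norm H^p \leq \tr H^p$ together with the independence and moment bounds of Assumption \ref{ass:H}. The only mild subtlety worth a sentence is the use of Hölder to reduce a mixed moment of distinct independent entries to a product of individual $L^q$ norms, each controlled by Assumption \ref{ass:H}; this is what lets us handle arbitrary (non-identically-distributed) entries. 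The main obstacle, if one insists on a self-contained argument, is carrying out the walk-counting bound $r \leq p/2 + 1$ cleanly, but this is entirely standard and not specific to the present setting.
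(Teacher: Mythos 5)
Your proposal is correct, and it is essentially the same strategy the paper has in mind: the paper disposes of the lemma in one line by citing the F\"uredi--Koml\'os argument, which is precisely the trace--moment method you are carrying out. The only real difference is one of refinement. The full F\"uredi--Koml\'os argument lets $p$ grow with $N$ (e.g.\ $p\sim\log^2 N$) and performs a more delicate walk count to keep the combinatorial constant from exploding, which is what produces the sharp statement $\norm H = 2+o(1)$. For the much weaker conclusion $\norm H\prec 1$ one does not need any of that: your version with a \emph{fixed} even $p=p(\epsilon,D)$, giving $\E\norm H^p\le \E\tr H^p=O_p(N)$ and then Markov, is perfectly adequate and, as you say, routine. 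The closed-walk bound $r\le p/2+1$ (distinct edges $\le p/2$, walk graph connected) and the shape-counting estimate $O_p(N^r)$ are exactly as you state.

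One small inaccuracy in your exposition: you invoke H\"older to ``split the product into single-entry moments'' and present this as the device that handles non-identically-distributed entries. In fact H\"older is not needed here. Because the upper-triangular entries are independent, the expectation of the walk product factors exactly over distinct edges, $\E\bigl[\prod_e H_e^{m_e}\bigr]=\prod_e\E\bigl[H_e^{m_e}\bigr]$, and each factor is bounded by $\E|H_e|^{m_e}=O_p(N^{-m_e/2})$; the non-identical distributions are absorbed simply by the uniformity in $i,j$ of the moment bound in Assumption~\ref{ass:H}. (In the complex Hermitian case, where a distinct edge may contribute a mixed power $H_{ij}^a\bar H_{ij}^b$, one uses $|\E[H_{ij}^a\bar H_{ij}^b]|\le\E|H_{ij}|^{a+b}$, still without H\"older.) This is a presentational point only and does not affect the validity of the argument.
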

\begin{proof}
This is a standard application of the F\"uredi-Koml\'os argument (see e.g.\ \cite[Section 2.1.6]{AGZ}).
\end{proof}

\paragraph{Ward identity} An important ingredient of the proofs is the following well-known identity.

\begin{lemma}[Ward identity] \label{lem:Ward_identity} 
	For every vector $\f x\in \R^N$
	\begin{equation*}
		\sum_j |G_{\f x j}|^2 = \frac{\im G_{\f x\f x}}{\eta}\,.	
	\end{equation*}
\end{lemma}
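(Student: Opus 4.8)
The Ward identity is a short spectral-theoretic computation, so the plan is to prove it directly from the spectral decomposition of the Hermitian matrix $W = H + A$.

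First I would diagonalize $W = H + A = \sum_{k=1}^N \lambda_k \f u_k \f u_k^*$ with real eigenvalues $\lambda_k$ and an orthonormal eigenbasis $(\f u_k)$. Then $G = (W-z)^{-1} = \sum_k (\lambda_k - z)^{-1} \f u_k \f u_k^*$, so that for a vector $\f x \in \R^N$ one has $G_{\f x j} = \scalar{\f x}{G \f e_j} = \sum_k (\lambda_k - z)^{-1} \scalar{\f x}{\f u_k} \scalar{\f u_k}{\f e_j}$. Taking the squared modulus and summing over $j$, the inner sum $\sum_j \scalar{\f u_k}{\f e_j} \overline{\scalar{\f u_l}{\f e_j}} = \scalar{\f u_k}{\f u_l} = \delta_{kl}$ collapses the double sum over eigenvectors to a single sum, giving
\begin{equation*}
\sum_j \abs{G_{\f x j}}^2 = \sum_k \frac{\abs{\scalar{\f x}{\f u_k}}^2}{\abs{\lambda_k - z}^2}\,.
\end{equation*}

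Next I would compute the right-hand side of the claimed identity the same way. Since $\im G_{\f x \f x} = \scalar{\f x}{(\im G) \f x}$ and $\im \frac{1}{\lambda_k - z} = \frac{\eta}{\abs{\lambda_k - z}^2}$ (using $z = E + \ii \eta$ and $\lambda_k$ real), we get $\im G_{\f x \f x} = \sum_k \frac{\eta \abs{\scalar{\f x}{\f u_k}}^2}{\abs{\lambda_k - z}^2}$. Dividing by $\eta > 0$ yields exactly the expression above, which completes the proof. Alternatively, and even more briefly, one can avoid the eigendecomposition entirely: write $\sum_j \abs{G_{\f x j}}^2 = \sum_j G_{\f x j} \overline{G_{\f x j}} = \scalar{\f x}{G G^* \f x}$, and use the resolvent identity $G G^* = G (W - \bar z)(W - z)^{-1} (G^*)$... more cleanly, $(W-z)^{-1}(W-\bar z)^{-1} = \frac{1}{z - \bar z}\big[(W-z)^{-1} - (W-\bar z)^{-1}\big] = \frac{1}{2\ii\eta}(G - G^*) = \frac{\im G}{\eta}$, so $\scalar{\f x}{G G^* \f x} = \frac{1}{\eta}\scalar{\f x}{(\im G)\f x} = \frac{\im G_{\f x \f x}}{\eta}$.

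There is essentially no obstacle here: the only points requiring a modicum of care are that $W$ is Hermitian so its eigenvalues are real (needed for $\im(\lambda_k - z)^{-1} = \eta \abs{\lambda_k - z}^{-2}$), and that $G^* = (W - \bar z)^{-1}$, which is immediate from $W = W^*$. I would present the second, resolvent-identity argument as the main proof since it is a two-line computation and generalizes transparently, mentioning that it is the standard argument.
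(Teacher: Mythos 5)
Your second argument is exactly the paper's proof: the paper says the identity ``follows immediately from the resolvent identity $G - G^* = 2\ii\eta G G^*$,'' and your computation $\sum_j |G_{\f x j}|^2 = (GG^*)_{\f x \f x} = \frac{1}{\eta}(\im G)_{\f x \f x}$ is precisely that, so the proposal is correct and takes the same route. The spectral-decomposition variant you offer first is a fine alternative but adds nothing beyond the two-line resolvent calculation.
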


\begin{proof}
This follows immediately from the resolvent identity $G - G^* = 2 \ii \eta GG^*$.
\end{proof}
As a consequence, for any $B \in \R^{N \times N}$ we have the estimate
\begin{equation} \label{ward_estimate}
\sum_{j} \abs{(GB)_{\f x j}}^2 = (GBB^*G^*)_{\f x \f x} \leq \norm{B}^2 (GG^*)_{\f x \f x} = \norm{B}^2 \frac{\im G_{\f  x\f x}}{\eta}\,.
\end{equation}

\paragraph{Cumulant expansion}

Recall that for a real random variable $h$, all of whose moments are finite, the $k$-cumulant of $h$ is
\begin{equation*}
	\cal C_k(h)\deq(-\mathrm{i})^k\bigg(\frac{\dd^{k}}{\dd t^k}\log\mathbb{E}[e^{\mathrm{i}th}]\bigg)\Bigg|_{t=0}.
\end{equation*}
We shall use a standard cumulant expansion from \cite{KKP, Kho2, HK}, which we record in Lemma \ref{lem:cumulant_expansion} below. We require slightly stronger bounds on the remainder term than in previous applications, and for completeness we give a proof of Lemma \ref{lem:cumulant_expansion} in Appendix \ref{A}.

\begin{lemma}[Cumulant expansion]\label{lem:cumulant_expansion}
	Let $f:\R\to\C$ be a smooth function, and denote by $f^{(k)}$ its $k$-th derivative. Then, for every fixed $\ell \in\N$, we have 
\begin{equation}\label{eq:cumulant_expansion}
	\mathbb{E}\big[h\cdot f(h)\big]=\sum_{k=0}^{\ell}\frac{1}{k!}\mathcal{C}_{k+1}(h)\mathbb{E}[f^{(k)}(h)]+\cal R_{\ell+1},
\end{equation}	
 assuming that all expectations in \eqref{eq:cumulant_expansion} exist, where $\cal R_{\ell+1}$ is a remainder term (depending on $f$ and $h$), such that for any $t>0$,
\begin{equation} \label{R_l+1}
\cal R_{\ell+1} = O(1) \cdot \bigg(\E\sup_{|x| \le |h|} \big|f^{(\ell+1)}(x)\big|^2 \cdot \E \,\big| h^{2\ell+4} \mathbf{1}_{|h|>t} \big| \bigg)^{1/2} +O(1) \cdot \bb E |h|^{\ell+2} \cdot  \sup_{|x| \le t}\big|f^{(\ell+1)}(x)\big|\,.
\end{equation}
\end{lemma}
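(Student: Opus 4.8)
The plan is to prove the Cumulant Expansion lemma (Lemma~\ref{lem:cumulant_expansion}) by the standard Taylor–expansion-and-characteristic-function argument, being careful to track the two separate contributions to the remainder claimed in \eqref{R_l+1}: one coming from the tail event $\{|h| > t\}$ and one from the bulk region $\{|h| \le t\}$. First I would recall the algebraic identity underlying the expansion: if $\varphi(t) \deq \E[\ee^{\ii t h}]$ is the characteristic function, then $\varphi'(t) = \ii\,\E[h\,\ee^{\ii t h}]$, and differentiating $\log\varphi$ gives the generating-function identity $\E[h\,\ee^{\ii t h}] = \varphi(t)\sum_{k\ge 0} \frac{\ii^{k+1}}{k!}\cal C_{k+1}(h)\, t^k$ at the level of formal power series. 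The cleanest route, however, is purely real-variable: write $f(h) = \sum_{k=0}^{\ell} \frac{f^{(k)}(0)}{k!} h^k + \cal E_\ell(h)$ with the integral-form Taylor remainder $\cal E_\ell(h) = \frac{1}{\ell!}\int_0^h (h-s)^\ell f^{(\ell+1)}(s)\,\dd s$, and then take expectations of $h \cdot f(h)$. The monomial terms $\E[h^{k+1}]$ are moments, which one converts to cumulants via the classical moment–cumulant relations; this produces exactly $\sum_{k=0}^{\ell} \frac{1}{k!}\cal C_{k+1}(h)\,\E[f^{(k)}(h)]$ after re-Taylor-expanding each $\E[f^{(k)}(h)]$ — so in fact it is slicker to run the argument symmetrically, expanding both sides and matching, or to cite the known formula and focus energy on the remainder bound, which is where the ``slightly stronger bounds'' are needed.

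For the remainder, I would write $\cal R_{\ell+1} = \E[h\,\cal E_\ell(h)]$ (modulo the bookkeeping of converting moments to cumulants, which contributes only finitely many lower-order fully-controlled terms) and split the expectation according to the indicator $\f 1(|h| > t) + \f 1(|h| \le t)$. On the tail event, bound $|\cal E_\ell(h)| \le \frac{|h|^{\ell+1}}{\ell!}\sup_{|x|\le |h|} |f^{(\ell+1)}(x)|$, so that $|h\,\cal E_\ell(h)\,\f 1(|h|>t)| \lesssim |h|^{\ell+2}\,\sup_{|x|\le|h|}|f^{(\ell+1)}(x)|\,\f 1(|h|>t)$, and apply Cauchy–Schwarz to separate $\big(\E\sup_{|x|\le|h|}|f^{(\ell+1)}(x)|^2\big)^{1/2}$ from $\big(\E|h^{2\ell+4}\f 1(|h|>t)|\big)^{1/2}$ — which is precisely the first term of \eqref{R_l+1}. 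On the bulk event $\{|h|\le t\}$, all the Taylor-remainder integrals range over $|s|\le t$, so $|\cal E_\ell(h)\,\f 1(|h|\le t)| \le \frac{|h|^{\ell+1}}{\ell!}\sup_{|x|\le t}|f^{(\ell+1)}(x)|$, giving $|h\,\cal E_\ell(h)\,\f 1(|h|\le t)| \lesssim |h|^{\ell+2}\sup_{|x|\le t}|f^{(\ell+1)}(x)|$, and taking expectations yields $O(1)\cdot \E|h|^{\ell+2}\cdot\sup_{|x|\le t}|f^{(\ell+1)}(x)|$ — the second term of \eqref{R_l+1}. The only subtlety is that the ``re-Taylor-expand $\E[f^{(k)}(h)]$'' step for $k \le \ell$ also generates remainders, but these are of the form $\cal C_{k+1}(h)\cdot(\text{Taylor remainder of }f^{(k)}\text{ of order }\ell-k)$, which are dominated by the same two expressions after relabelling, since $\cal C_{k+1}(h) = O(1)$ under the moment bounds and $|h|^{\ell-k}$ absorbs into $|h|^{\ell+2}$ on the relevant events.

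The main obstacle I anticipate is not any single estimate but the clean organization of the double expansion so that every error term lands in one of the two buckets of \eqref{R_l+1} with the claimed structure; in particular, one must be careful that the split into $\{|h|>t\}$ and $\{|h|\le t\}$ is performed consistently at every stage and that the supremum $\sup_{|x|\le|h|}$ versus $\sup_{|x|\le t}$ is assigned to the correct region. A secondary technical point is ensuring all expectations that appear are finite so that the manipulations are justified — this is exactly the hypothesis ``assuming that all expectations in \eqref{eq:cumulant_expansion} exist'' in the statement, together with the implicit finiteness of $\E|h|^{2\ell+4}$ and of $\E\sup_{|x|\le|h|}|f^{(\ell+1)}(x)|^2$ on the right-hand side of \eqref{R_l+1}. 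Since the paper defers this proof to Appendix~\ref{A} and describes it as a strengthening of a known lemma from \cite{KKP, Kho2, HK}, the expectation is that the argument is routine modulo careful bookkeeping, and I would present it as such.
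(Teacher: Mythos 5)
Your overall strategy is the same as the paper's: establish the expansion for polynomials via moment--cumulant relations, then Taylor-expand $f$ to reduce to that case, and split the resulting remainder into a tail part $\{|h|>t\}$ (handled by Cauchy--Schwarz) and a bulk part $\{|h|\le t\}$. However, your treatment of the ``extra'' remainders coming from replacing $\E[P^{(k)}(h)]$ by $\E[f^{(k)}(h)]$ for $k\le\ell$ has a genuine gap. Those terms have the form
\begin{equation*}
\mathcal{C}_{k+1}(h)\cdot \E\Big[f^{(\ell+1)}(\xi_k)\,h^{\ell+1-k}\Big]\,,
\end{equation*}
and you dismiss them by claiming $\mathcal{C}_{k+1}(h)=O(1)$ ``under the moment bounds'' and that $|h|^{\ell-k}$ ``absorbs into $|h|^{\ell+2}$.'' Neither is correct here. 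The lemma is stated for an arbitrary $h$ with finite moments, with the implicit constants $O(1)$ depending only on $\ell$; there is no hypothesis guaranteeing $\mathcal{C}_{k+1}(h)=O(1)$ (indeed in the application $\mathcal{C}_{k+1}(H_{ij})\asymp N^{-(k+1)/2}$, and for a general $h$ the cumulant could be large). And $|h|^{\ell+1-k}$ certainly does not dominate or get absorbed by $|h|^{\ell+2}$ pointwise or in expectation for a general distribution, since lower moments are not controlled by higher ones.

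What the paper's Appendix~\ref{A} actually does is: \textbf{(a)} bound the cumulant by homogeneity and Jensen as $|\mathcal{C}_{k+1}(h)|\le C_k\,\E|h|^{k+1}$ (every monomial $\mu_{j_1}\cdots\mu_{j_m}$ in the moment--cumulant polynomial has $j_1+\dots+j_m=k+1$, and $\E|h|^{a}\,\E|h|^{b}\le\E|h|^{a+b}$), after which the sum becomes $\sum_{j=0}^{\ell+1}\E|h|^{j}\cdot\E\big[\sup_{|x|\le|h|}|f^{(\ell+1)}(x)|\,|h|^{\ell+2-j}\big]$; and \textbf{(b)} prove the elementary inequality $\E X^{a}\,\E\big[X^{b}\mathbf{1}_{X>t}\big]\le\E\big[X^{a+b}\mathbf{1}_{X>t}\big]$ (the paper's Lemma~\ref{jeson'}), which is exactly what is needed to recombine the separate factors $\E|h|^{j}$ and $\E\big[|h|^{2\ell+4-2j}\mathbf{1}_{|h|>t}\big]^{1/2}$ after Cauchy--Schwarz on the tail into the single factor $\E\big[|h|^{2\ell+4}\mathbf{1}_{|h|>t}\big]^{1/2}$ appearing in \eqref{R_l+1}. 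Your proposal would handle the leading $\E[h\,\cal E_\ell(h)]$ term correctly but cannot close the estimate for the $k\le\ell$ remainders without these two ingredients. There is also a small off-by-one: the Taylor remainder of $f^{(k)}$ at order $\ell-k$ carries $h^{\ell+1-k}$, not $h^{\ell-k}$.
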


The following result gives bounds on the cumulants of the entries of $H$.
\begin{lemma}\label{lem:cumulant_factos_estimate} If $H$ satisfies Assumption \ref{ass:H} then for every $i,j\in\qq{N}$ and $k\in \N$ we have
	\begin{equation*}
		\cal C_{k+1}(H_{ij})=O_k\big(N^{-(k+1)/2}\big)
	\end{equation*}
	and $\cal C_{1}(H_{ij})=0$.
\end{lemma}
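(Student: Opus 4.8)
The plan is to relate the cumulants of $H_{ij}$ directly to its moments and then invoke the moment assumption from Assumption \ref{ass:H}. First I would recall the combinatorial identity expressing the $k$-th cumulant of a random variable $h$ as a polynomial in its moments $\E[h^1], \dots, \E[h^k]$ — for instance via the exponential-formula relation $\mathcal{C}_k(h) = \sum_{\pi} (-1)^{|\pi|-1}(|\pi|-1)! \prod_{B \in \pi} \E[h^{|B|}]$, where the sum runs over set partitions $\pi$ of $\{1,\dots,k\}$. Since this identity involves only finitely many terms for each fixed $k$ and each term is a product of at most $k$ moments, it suffices to control each moment $\E[H_{ij}^r]$ for $1 \le r \le k+1$.

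Next I would apply Assumption \ref{ass:H}: $\E[|\sqrt{N}H_{ij}|^p] = O_p(1)$ for all $p \in \N$, which rearranges to $\E[|H_{ij}|^p] = O_p(N^{-p/2})$. Substituting into a generic term of the cumulant expansion, a product of moments $\prod_{B \in \pi} \E[H_{ij}^{|B|}]$ over a partition $\pi$ of $\{1,\dots,k+1\}$ is bounded by $O_k\!\big(\prod_B N^{-|B|/2}\big) = O_k\big(N^{-(k+1)/2}\big)$, since the block sizes sum to $k+1$. As there are only finitely many (a Bell-number's worth of) partitions, summing gives $\mathcal{C}_{k+1}(H_{ij}) = O_k(N^{-(k+1)/2})$, uniformly in $i,j$. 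Finally, $\mathcal{C}_1(H_{ij}) = \E[H_{ij}] = 0$ by the mean-zero hypothesis in Assumption \ref{ass:H}.

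There is no real obstacle here; the only point requiring a line of care is the justification of the moment–cumulant inversion formula, which needs all moments to be finite — this is guaranteed by Assumption \ref{ass:H} — and the observation that the bound is uniform in $i,j$ because the implicit constants in Assumption \ref{ass:H} are (the $O_p(1)$ is uniform over entries). One could alternatively argue via the scaling $\mathcal{C}_{k+1}(H_{ij}) = N^{-(k+1)/2}\mathcal{C}_{k+1}(\sqrt{N}H_{ij})$ together with the fact that the cumulants of $\sqrt{N}H_{ij}$ are $O_k(1)$, which follows from the moment bounds by the same inversion formula; this makes the $N$-dependence manifest and isolates the combinatorics into an $N$-free statement.
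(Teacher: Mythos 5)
Your proposal is correct and is essentially the same argument the paper has in mind: the paper simply invokes ``homogeneity of the cumulants,'' which is exactly the scaling $\mathcal{C}_{k+1}(H_{ij}) = N^{-(k+1)/2}\mathcal{C}_{k+1}(\sqrt{N}H_{ij})$ you note at the end, combined with the observation that $\sqrt{N}H_{ij}$ has $O_k(1)$ cumulants because it has $O_k(1)$ moments. Your main version, which substitutes the moment bounds directly into the moment--cumulant inversion formula, is just a more explicit rendering of the same fact, and both reach the conclusion with the required uniformity in $i,j$.
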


\begin{proof} This follows easily by the homogeneity of the cumulants.
\end{proof}

\subsubsection*{Self-improving high-probability bounds}
Throughout the proof, we shall repeatedly use self-improving high-probability bounds, of which the following lemma is a prototype.
\begin{lemma} \label{lem:self-improv}
	Let $C > 0$ be a constant, $X \geq 0$, and $Y \in [N^{-C}, N^C]$. Suppose there exists a constant $q \in [0,1)$ such that for any $Z \in [Y, N^C]$, we have the implication
	\begin{equation} \label{self-improving-intro}
	X \prec Z \qquad \Longrightarrow \qquad X \prec Z^qY^{1-q}\,,
	\end{equation}
	Then we have $X \prec Y$ provided that $X \prec N^C$.
\end{lemma}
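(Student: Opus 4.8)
The plan is to iterate the implication \eqref{self-improving-intro} starting from the a priori bound $X \prec N^C$ and show that after finitely many steps one reaches $X \prec Y$. First I would observe that applying the hypothesis with $Z = N^C$ gives $X \prec N^{Cq} Y^{1-q}$, and more generally, if at stage $n$ we have established $X \prec Z_n$ for some $Z_n \in [Y, N^C]$, then applying the implication with $Z = Z_n$ yields $X \prec Z_n^q Y^{1-q} =: Z_{n+1}$. Here I should note that $Z_{n+1} \in [Y, N^C]$ automatically: indeed $Z_{n+1} \geq Y^q Y^{1-q} = Y$ and $Z_{n+1} \leq (N^C)^q (N^C)^{1-q} = N^C$ (using $Y \leq N^C$), so the hypothesis is applicable at the next stage and the iteration is well-defined.

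Next I would make the recursion explicit. Writing $Z_0 = N^C$ and $Z_{n+1} = Z_n^q Y^{1-q}$, an easy induction gives
\begin{equation*}
Z_n = Z_0^{q^n} Y^{1-q^n} = N^{C q^n} \cdot Y^{1 - q^n}\,.
\end{equation*}
Since $q \in [0,1)$, we have $q^n \to 0$, so $Z_n = Y \cdot (N^C/Y)^{q^n} \to Y$ as $n \to \infty$, and in fact $Z_n / Y = (N^C/Y)^{q^n} \leq (N^{2C})^{q^n}$. The point is now to stop the iteration at a step $n$ that depends only on the target accuracy $\epsilon$: given $\epsilon > 0$ we choose $n = n(\epsilon)$ large enough (depending only on the constants $C$ and $q$, not on $N$) so that $2C q^n \leq \epsilon$, whence $Z_n \leq N^\epsilon Y$. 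Then $X \prec Z_n$ combined with $Z_n \le N^\epsilon Y$ gives $\P(X > N^{2\epsilon} Y) \le \P(X > N^\epsilon Z_n) \le N^{-D}$ for $N$ large, which is exactly the statement $X \prec Y$ once we relabel $\epsilon$.

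The only genuinely delicate point — and the one I would be most careful about — is the bookkeeping of the implicit constants and thresholds in the definition of $\prec$ through finitely many iterations. Each application of $\prec$ loses an $N^\epsilon$ factor and worsens the threshold $N_0(\epsilon, D)$, but since the number of iterations $n(\epsilon)$ is a fixed finite number once $\epsilon$ (and $C,q$) are fixed, the cumulative loss is still just a single $N^{O(\epsilon)}$ factor and a single (worse but finite) threshold, which is harmless by the definition of stochastic domination. One must also check that the bound $Y \in [N^{-C}, N^C]$ is used correctly: it guarantees $N^C/Y \leq N^{2C}$, which is what makes $q^n$-th powers of this ratio tend to $1$ uniformly. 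No uniformity issue arises because all estimates are uniform in whatever hidden parameters $X$ and $Y$ carry, by the standing convention on $\prec$. I do not expect any real obstacle here; the lemma is essentially a quantitative fixed-point/contraction argument dressed up in the language of high-probability bounds.
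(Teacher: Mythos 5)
Your proof is correct and follows essentially the same iteration as the paper: starting from $X \prec N^C$, applying the self-improving implication $k$ times gives $X \prec N^{Cq^k}Y^{1-q^k} \leq N^{2Cq^k}Y$, and taking $k$ large makes the exponent smaller than any given $\epsilon$. The only additions beyond the paper's terse argument are the (correct but routine) verification that each intermediate $Z_n$ stays in $[Y, N^C]$ and the explicit bookkeeping of the $\prec$ thresholds; both are sound.
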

\begin{proof}
	By iteration of \eqref{self-improving-intro} $k$ times, starting from $X \prec N^C$, we find
	\begin{equation*}
	X \prec N^{C q^k} Y^{1 - q^k} \leq N^{2C q^{k}} Y\,.
	\end{equation*}
	Since $2Cq^k$ can be made arbitrarily small by choosing $k$ large enough, the claim follows.
\end{proof}

\section{
	Proof of Theorem \ref{thm:main_general}}

\subsection{Preliminary estimates}\label{subsec:preliminarties_to_proofs}

In this subsection we collect some tools and basic estimates that are used in the proof of Theorem \ref{thm:main_general}. 
Throughout the following we consider two fixed deterministic vectors $\f v,\f w\in \mathbb{S}$.

Define
\begin{equation} \label{31}
s_{ij}\deq (1+\delta_{ij})^{-1} N \bb E \big|H_{ij}\big|^2\,,
\end{equation}
and for a vector $\f x = (x_i)_{i \in \qq{N}} \in \R^N$ denote 
\begin{equation}\label{eq:tilted_vectors}
\f x^{j} = (x^{j}_i)_{i\in \qq{N}} \,, \qquad \text{ where } \quad x^{j}_i \deq x_i {s}_{ij}\,.
\end{equation}
We define the set of vectors
\begin{equation*}
\bb X \equiv \bb X(\f v, \f w) \deq \{\f v, \f w, \f v^{1},\dots,\f v^{N}, \f e_1,\dots,\f e_N\}\,.
\end{equation*}
Note that, due to Assumption \ref{ass:H}, we have $s_{ij} = O(1)$, so that $\abs{\f x^j} = O(\abs{\f x})$ for all $\f x \in \R^N$ and $j \in \qq{N}$, and in particular $|\f x|=O(1)$ for all $\f x \in \bb X$. Furthermore, under the assumptions $\norm{M} = O(1)$ and $G-M =O_{\prec} (\phi)$ of Theorem \ref{thm:main_general} we have, by a union bound,
\begin{equation}\label{eq:bound_on_G}
\max_{\f x,\f y\in \bb X}\abs{G_{\f x\f y}}\leq \max_{\f x,\f y\in \bb X}\abs{M_{\f x\f y}}+\max_{\f x,\f y\in \bb X}\abs{(G-M)_{\f x\f y}}\prec 1+\phi\,. 
\end{equation}

We abbreviate
\begin{equation}\label{eq:psi}
\zeta\deq  \sqrt{\frac{\|\im M\| + \phi+\eta}{N \eta}}\,, \quad \mbox{and} \quad \wt \zeta \deq (1+\phi)^3 \zeta \,.
\end{equation}
Since $N^{-1}\leq \phi\leq N^{\tau/10}$, $N^{-1+\tau} \le \eta \le \tau^{-1} $, and $\norm{M} = O(1)$ by the assumption of Theorem \ref{thm:main_general}, we have
\begin{equation} \label{eq:bound_on_psi}
\zeta\leq  \wt \zeta\leq (1+\phi)^3  O\Bigg(\sqrt{\frac{1+\phi}{N\eta}+\frac{1}{N}}\,\Bigg) \leq (1+\phi)^3 O\Bigg(\sqrt{\frac{1+\phi}{N\eta}}\,\Bigg) \leq O\big(N^{-3\tau/20}\big)\,,
\end{equation}
and in particular $\wt \zeta \leq 1$. In addition, it follows from the definition of $ \zeta$ that $\zeta \geq N^{-1/2}$.

\paragraph{Using the Ward identity}

The proof of Theorem \ref{thm:main_general} makes constant use of the Ward identity (Lemma \ref{lem:Ward_identity}) and its consequence \eqref{ward_estimate}, usually combined with the Cauchy-Schwarz inequality. For future reference, we collect here several representative examples of such uses, as they appear in the proof. To streamline the presentation, in the actual proof of Theorem \ref{thm:main_general} below, we do not give the details of the applications of the Ward identity. Instead, we tacitly use estimates of the following type whenever we mention the use of the Ward identity. In each of the examples, $\f v,\f w\in \mathbb{S}$ are deterministic and $B$ is a deteministic $N\times N$ matrix satisfying $\norm{B} = O(1)$.

\begin{enumerate}
	\item By \eqref{ward_estimate} and the estimate $G - M = O_\prec(\phi)$ we have
	\begin{equation}\label{eq:Ward_example_1}
	\frac{1}{N}\sum_i |(GB)_{\f v i}|^2 \leq O(1) \frac{\im G_{\f v\f v}}{N\eta} \prec \frac{\im M_{\f v\f v}+\phi}{N\eta} \leq \frac{\norm{\im M} +\phi}{N\eta} \leq \zeta^2\,.
	\end{equation}
	Similarly, 
	\begin{equation}\label{eq:Ward_example_2}
	\frac{1}{N}\sum_i |(GB)_{\f v i}| \leq \bigg(\frac{1}{N}\sum_i |(GB)_{\f v i}|^2\bigg)^{1/2} \prec \zeta\,.
	\end{equation}
	\item Using that $s_{ij} = O(1)$  and \eqref{eq:Ward_example_2} we find
	\begin{multline}\label{eq:Ward_example_3}
	\frac{1}{N^2}\sum_{i,j} \big|G_{\f v \f e_i^j}(GB)_{ji}G_{j\f w}\big| \leq \frac{O(1)}{N} \sum_i|G_{\f v i}|\bigg(\frac{1}{N}\sum_j|(GB)_{ji}G_{j\f w}|\bigg) \\
	\leq \frac{O(1)}{N}\sum_i |G_{\f v i}|\bigg(\frac{1}{N}\sum_j |(GB)_{ji}|^2\bigg)^{1/2}\bigg(\frac{1}{N}\sum_j |G_{j\f w}|^2\bigg)^{1/2}\prec \frac{1}{N}\sum_i |G_{\f v i}|\zeta^2 \prec \zeta^3\,.
	\end{multline}
	\item We have
	\begin{multline}\label{eq:Ward_example_4}
	\frac{1}{N^4}\sum_{i,j,a,b}\big|(GB)_{\f v \f e_i^j}G_{j\f w}G_{\f v a}G_{a\f w} G_{\f e_b^ai}(GB)_{jb}\big| \\
	= \frac{O(1)}{N^3} \sum_{i,j,a}\big|(GB)_{\f v i}G_{j\f w}G_{\f v a}G_{a\f w}\big|\bigg(\frac{1}{N}\sum_b |G_{b i}(GB)_{jb}|\bigg) \prec\frac{1}{N^3} \sum_{i,j,a}\big|(GB)_{\f v i}G_{j\f w}G_{\f v a}G_{a\f w}\big|\zeta^2  \\
	\leq \bigg(\frac{1}{N}\sum_i |(GB)_{\f v i}|\bigg)\bigg(\frac{1}{N}\sum_j |G_{j\f w}|\bigg)\bigg(\frac{1}{N}\sum_a |G_{\f v a}G_{a\f w}|\bigg)\zeta^2 \prec \zeta^6\,.
	\end{multline}
	\item Let $Q$ be a random matrix satisfying $Q=O_\prec(\lambda)$. Then
	\begin{equation}\label{eq:Ward_example_5}
	\frac{1}{N^2} \sum_{i,j} |s_{ij} (GB)_{ji}Q_{ij}| \leq \frac{O(1)}{N^2}\sum_{i,j} |(GB)_{ji}Q_{ij}| \prec \frac{\lambda}{N^2}\sum_{i,j} |(GB)_{ji}| \leq \lambda \zeta\,.
	\end{equation}
\end{enumerate}

\paragraph{Reduction: removal of $\cal J$}
In order to simplify the estimation of $\Pi(G)$ and $\underline{B\Pi(G)}$ we subtract from $\Pi(G)$ a term, denoted below by $\cal J$, which can easily be shown to be sufficiently small. To this end, we split
\begin{equation} \label{eg}
(\cal S(G)G)_{\f v\f w} = \mathcal J_{\f v \f w} + \mathcal K_{\f v \f w} \,, \qquad \mathcal J_{\f v \f w} \deq \frac{1}{N}\sum_{j} G_{j {\f v}^{j}}G_{j\f w}\,, \qquad \mathcal K_{\f v \f w} \deq \frac{1}{N}\sum_{j} G_{jj} G_{ \f v^{j}\f w}\,,
\end{equation}
and denote 
\begin{equation} \label{def_D}
\cal D_{\f v\f w} \deq \Pi(G)_{\f v\f w} - \mathcal J_{\f v\f w} = (I+zG-AG+ \cal K)_{\f v\f w}=(HG +\cal K)_{\f v\f w}\,,
\end{equation}
where we used the identity $I+zG-AG = HG$.

Due to \eqref{eq:bound_on_G} and the Ward identity, we know that 
\begin{equation} \label{egWard}
|\mathcal J_{\f v\f w}| \prec (1+\phi)\cdot \frac{1}{N}\sum_{j}|G_{j \f w}| \prec (1 + \phi) \zeta\,.
\end{equation}
Similarly, using the Ward identity we get
\begin{equation}\label{egWard2}
|\underline{\cal J B}| = \frac{1}{N^2} \bigg|\sum_{i,j} (GB)_{ji}G_{ji}s_{ij}\bigg| \prec \zeta^2\,.
\end{equation}
Because of \eqref{egWard} and \eqref{egWard2}, for the proof of Theorem \ref{thm:main_general} it suffices to prove the following two results.

\begin{proposition} \label{prop:est1}
	Under the assumptions of Theorem \ref{thm:main_general} we have $\cal D = O_\prec\pb{\wt \zeta}$.
\end{proposition}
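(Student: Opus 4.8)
Recall from \eqref{def_D} that $\cal D_{\f v \f w} = (HG + \cal K)_{\f v \f w}$ with $\cal K_{\f v \f w} = \frac 1N \sum_j G_{jj} G_{\f v^j \f w}$. The strategy is to estimate a high moment $\E \abs{\cal D_{\f v \f w}}^{2p}$ for arbitrary fixed $p \in \N$, establish a self-improving bound, and then invoke Lemma \ref{lem:self-improv}. Write $\abs{\cal D_{\f v \f w}}^{2p} = \cal D_{\f v \f w} \cdot \cal D_{\f v \f w}^{p-1} \ol{\cal D}_{\f v \f w}^{\,p}$ and expand the distinguished factor $\cal D_{\f v \f w}$ as $\sum_{i,j} \ol v_i H_{ij} G_{j \f w} + \cal K_{\f v \f w}$. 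Since $H_{ij}$ appears linearly, I apply the cumulant expansion (Lemma \ref{lem:cumulant_expansion}) in each variable $H_{ij}$, treating the remaining $G$-factors (and the other copies of $\cal D$) as the smooth function $f$. The first-order cumulant vanishes (Lemma \ref{lem:cumulant_factos_estimate}), so the expansion begins at the second cumulant $\cal C_2(H_{ij}) = N^{-1} s_{ij}$; the key point is that the second-cumulant term, in which $\partial_{H_{ij}}$ hits a single $G$-factor in the same $\cal D$, produces precisely $-\frac 1N \sum_j G_{j \f v^j} G_{j \f w}$ plus $\cal K$-type and $\cal J$-type contributions — this is the cancellation that $\cal K$ was designed to kill (that is, the leading term $\E[\sum_{i,j}\ol v_i H_{ij} G_{j\f w}]$ cancels $\E[\cal K_{\f v\f w}]$ up to errors controlled by $\cal J$, which was already removed). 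The higher cumulants $\cal C_{k+1}(H_{ij}) = O(N^{-(k+1)/2})$ for $k \ge 2$ give terms with extra smallness, and the remainder $\cal R_{\ell+1}$ is negligible for $\ell$ chosen large depending on $p$, using $\norm H \prec 1$ (Lemma \ref{lem:H_bound}) and the a priori bound $\norm{G} \le 1/\eta \le N^{\tau^{-1}}$.

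\textbf{Bounding the expanded terms.} After the expansion, a typical term is a sum over indices $i, j$ (and possibly further indices created by differentiating $G$, each $\partial_{H_{ab}} G = -G_{\cdot a} G_{b \cdot} - G_{\cdot b} G_{a \cdot}$) of a product of Green function entries, multiplied by cumulant factors and by the remaining $\cal D^{p-1} \ol{\cal D}^p$. The crucial mechanism, emphasized in the introduction, is that I must perform the summations over the fresh indices \emph{before} bounding, so as to exploit the isotropic a priori bound \eqref{eq:bound_on_G} together with the Ward identity \eqref{ward_estimate}: term-by-term estimation would lose factors of $N$. The representative computations \eqref{eq:Ward_example_1}--\eqref{eq:Ward_example_5} are exactly the building blocks here; each internal sum of the form $\frac 1N \sum_i \abs{(GB)_{\f v i}}$ or $\frac 1N \sum_j \abs{G_{j \f w}}^2$ contributes a factor $\zeta$ or $\zeta^2$, and $\norm{\im M}$ enters only through the Ward identity via $\im G_{\f x \f x} \prec \norm{\im M} + \phi$. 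Counting, each application of a cumulant/derivative step that does not close an index loop costs a factor $\lesssim (1+\phi)\zeta$ (roughly: one free $G$-entry bounded by $1+\phi$, offset by one Ward sum giving $\zeta$), and one overall factor $(1+\phi)^2 \zeta$ is produced because the distinguished $\cal D$ has two external vectors $\f v, \f w$; together with the normalization $\zeta \ge N^{-1/2}$ and $\wt\zeta = (1+\phi)^3 \zeta$ this yields, schematically,
\begin{equation*}
\E \abs{\cal D_{\f v \f w}}^{2p} \prec \sum_{k=0}^{2p-1} \wt\zeta^{\,k+1} \, \pb{\E \abs{\cal D_{\f v \f w}}^{2p}}^{(2p-1-k)/(2p)} \;+\; (\text{negligible})\,,
\end{equation*}
after Hölder is applied to separate the remaining $2p-1$ copies of $\cal D$. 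This is a polynomial (Young-type) inequality in $\E\abs{\cal D_{\f v\f w}}^{2p}$ of the form handled by Lemma \ref{lem:self-improv}: assuming the a priori bound $\cal D = O_\prec(\Lambda)$ for some $\Lambda \in [\wt\zeta, N^C]$, the above gives $\cal D = O_\prec(\Lambda^q \wt\zeta^{\,1-q})$ for some $q \in [0,1)$, hence $\cal D = O_\prec(\wt\zeta)$ provided we have an initial polynomial bound $\cal D = O_\prec(N^C)$, which follows from $\norm H \prec 1$, $\norm M = O(1)$, $G - M = O_\prec(\phi)$ and $\eta \ge N^{-1+\tau}$.

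\textbf{Main obstacle.} The hard part is the bookkeeping of the cumulant expansion: organizing the (many) terms produced by repeatedly differentiating the product of $G$-entries and of the surviving copies of $\cal D$, verifying that in every term the fresh indices can be summed to produce enough $\zeta$-factors (via the Ward identity) to beat the number of loose $G$-entries (each bounded by $1+\phi$), and checking the leading cancellation between the $\cal C_2$-term and $\cal K$ cleanly, with the leftover being $\cal J$-type and therefore already subtracted. One must also be careful that differentiating a factor $\ol{\cal D}_{\f v \f w}$ inside the expansion does not create an uncontrolled term — but such a derivative again produces $G$-entries and is handled by the same Ward-identity bookkeeping, at the cost of one fewer surviving copy of $\cal D$ in the Hölder step, which is exactly compatible with the exponent accounting above. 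Assembling all contributions into the single self-improving inequality, with explicit powers of $(1+\phi)$ matching the claimed $\wt\zeta = (1+\phi)^3\zeta$, is the most delicate point; everything else is routine given the preliminary estimates of Section \ref{subsec:preliminarties_to_proofs}.
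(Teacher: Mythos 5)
Your proposal follows essentially the same route as the paper: expand $\E\abs{\cal D_{\f v\f w}}^{2p}$ by the cumulant expansion applied to the $HG$ part of $\cal D$, exploit the cancellation of the $\cal C_2$-term against $\cal K$ (leaving the already-removed $\cal J$), sum fresh indices before bounding so as to harvest $\zeta$-factors from the Ward identity and the isotropic a priori bound, bound the remainder using $\norm H \prec 1$ and the deterministic bound on $\norm G$, and close via the self-improving iteration of Lemma \ref{lem:self-improv}. One inaccuracy is worth flagging: the schematic moment inequality you display,
\begin{equation*}
\E \abs{\cal D_{\f v \f w}}^{2p} \prec \sum_{k=0}^{2p-1} \wt\zeta^{\,k+1} \, \pb{\E \abs{\cal D_{\f v \f w}}^{2p}}^{(2p-1-k)/(2p)}\,,
\end{equation*}
is too strong and is inconsistent with your own later claim that one only gets $\cal D = O_\prec(\Lambda^q\wt\zeta^{1-q})$ for some $q\in(0,1)$: if the displayed bound held, Young's inequality would give $\cal D\prec\wt\zeta$ immediately with $q=0$. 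The reason the a priori parameter $\lambda$ must appear in the coefficients is that the derivative terms produce factors such as $\cal D_{\f v\,\f v^j}$ with a tilted second vector (see the paper's \eqref{wardexample}); these cannot be recycled into the H\"older moment in $\cal D_{\f v\f w}$ and must be estimated wholesale by the a priori bound $\lambda$. The correct inequality reads $\E|\cal D_{\f v\f w}|^{2p}\prec\sum_{n=1}^{2p}(\wt\zeta\lambda^2)^{n/3}(\E|\cal D_{\f v\f w}|^{2p})^{(2p-n)/2p} + O(N^{-2p})$, giving the implication $\cal D=O_\prec(\lambda)\Rightarrow\cal D=O_\prec((\wt\zeta\lambda^2)^{1/3})$, i.e.\ $q=2/3$, which then iterates to the claim. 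With this correction your outline matches the paper's argument.
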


\begin{proposition} \label{prop:est2}
	Under the assumptions of Theorem \ref{thm:main_general} we have $\ul{B \cal D} = O_\prec\pb{{\wt \zeta}^2}$ provided that $\norm{B} = O(1)$.
\end{proposition}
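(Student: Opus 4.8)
The plan is to estimate $\ul{B\cal D} = \frac1N\tr(B\cal D)$ directly via a cumulant expansion, exploiting the identity $\cal D = HG + \cal K$ from \eqref{def_D}. Since $\ul{B\cal D} = \frac1N\sum_{i,j}(HG)_{ij}B_{ji} + \ul{B\cal K}$, the main object is $\E[\,\frac1N\sum_{i,j}H_{ij}(GB)_{ji}\,]$ and, more generally, its $p$-th moment, which we control self-consistently. Writing $\ul{B\cal D}$ as a polynomial in the entries $H_{ij}$ and applying Lemma \ref{lem:cumulant_expansion} to each factor $H_{ij}\cdot(\text{rest})$, the $k=1$ term produces $\cal C_2(H_{ij})\,\E[\partial_{H_{ij}}\{(GB)_{ji}\cdots\}]$; using $\partial_{H_{ij}}G_{ab} = -G_{ai}G_{jb}$ (and the symmetric term) and $\cal C_2(H_{ij}) = s_{ij}/N$ (up to the diagonal correction), the leading contribution is precisely $-\ul{B\cal S(G)G} = -\ul{B(\cal J + \cal K)}$, which cancels the $\cal K$ term coming from $\cal D$ and leaves $-\ul{B\cal J}$, already shown to be $O_\prec(\zeta^2)$ in \eqref{egWard2}. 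Thus the cancellation of the two leading terms — the $\cal K$ built into $\cal D$ against the $k=1$ cumulant term — is the structural heart of the argument.

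After this cancellation, what remains are: (i) the higher cumulant terms $k\geq 2$, each carrying a factor $\cal C_{k+1}(H_{ij}) = O(N^{-(k+1)/2})$ by Lemma \ref{lem:cumulant_factos_estimate}, together with $\partial_{H_{ij}}^k$ acting on products of Green function entries; (ii) the case where $\partial_{H_{ij}}$ hits a second $H$-factor elsewhere in a higher moment $(\ul{B\cal D})^p$, rather than a $G$; and (iii) the remainder $\cal R_{\ell+1}$, which is negligible by the moment bounds in Assumption \ref{ass:H} and the a priori bound $\norm H \prec 1$ from Lemma \ref{lem:H_bound} together with $G - M = O_\prec(\phi)$, provided $\ell$ is taken large enough (depending on $p,\epsilon,D$). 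For the terms in (i) and (ii), each derivative produces one extra pair of Green-function entries sharing an index $i$ or $j$ with the existing structure, and after summing over the repeated indices we apply the Ward-identity estimates collected in \eqref{eq:Ward_example_1}–\eqref{eq:Ward_example_5}: each such summation contributes a factor of $\zeta$ (or $\zeta^2$ for a closed loop), while the raw bound $\max_{\f x,\f y\in\bb X}|G_{\f x\f y}| \prec 1+\phi$ from \eqref{eq:bound_on_G} absorbs the remaining free entries into powers of $(1+\phi)$. A careful bookkeeping of how many $\zeta$'s and how many $(1+\phi)$'s each monomial carries — keeping in mind that $\cal C_2$ already supplies $N^{-1}$, i.e. two powers of $\zeta$ worth of smallness relative to a single $G$-entry — shows every surviving term is $O_\prec((1+\phi)^6\zeta^2) = O_\prec(\wt\zeta^2)$ after at most $O(N^{-\tau/2})$-type losses are reabsorbed.

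To make the moment method rigorous I would fix $p\in\N$, expand $\E|\ul{B\cal D}|^{2p}$ (or $\E(\ul{B\cal D})^p$ with a conjugation symmetrization), isolate within one copy of $\ul{B\cal D}$ the leading $H$-factor, apply the cumulant expansion of Lemma \ref{lem:cumulant_expansion}, and argue that every resulting term is bounded by $\wt\zeta^2$ times lower moments of $\ul{B\cal D}$ plus purely deterministic negligible terms — exactly the self-improving structure formalized in Lemma \ref{lem:self-improv} (this is the mechanism alluded to by the displayed equation references $\eqref{D_self_improv}$/$\eqref{Q_self_improv}$ in the introduction). Running the self-improvement from the trivial a priori bound $\ul{B\cal D} \prec 1+\phi$ (consequence of \eqref{eq:bound_on_G}) down to the target then completes the proof; in the language of the introduction, the matrix playing the role of the self-improving object here is the quantity $Q$ of \eqref{eq:defn_of_Q}, for which a second cumulant expansion produces the required bound.

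The main obstacle is the combinatorial bookkeeping in step (i)–(ii): one must verify that, across \emph{all} the ways $k$ derivatives can land on the various Green-function factors and the remaining $H$-factors in the $p$-th moment, no term escapes the budget of two powers of $\zeta$ (equivalently, that the leading term really is the \emph{only} one not gaining an extra $\zeta^2$ beyond the trivial bound). This requires carefully tracking which index summations are "free" versus "tied" by a shared index, and using the Ward identity \eqref{ward_estimate} rather than naive term-by-term estimates at every such summation — precisely the point emphasized in the introduction that one must sum before estimating. The analogous but simpler one-moment analysis behind Proposition \ref{prop:est1} provides the template; here the extra normalized trace $\frac1N\sum_{i,j}$ is what upgrades the bound from $\wt\zeta$ to $\wt\zeta^2$, since the trace supplies one additional Ward-type summation over a diagonal-type index.
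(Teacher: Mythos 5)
Your proposal follows essentially the same route as the paper: cumulant-expand the $HG$ part of $\cal D$ in a high moment $\E|\ul{B\cal D}|^{2p}$, cancel $\cal K$ against the $k=1$ cumulant term to leave $\ul{\cal J B}=O_\prec(\zeta^2)$, use the Ward identity systematically, and recognize the matrix $Q$ of \eqref{eq:defn_of_Q} — which satisfies $(1+\delta_{ij})\partial_{ij}\ul{\cal D B}=N^{-1}(GB)_{ij}+N^{-1}(GB)_{ji}-Q_{ij}-Q_{ji}$ — as the central auxiliary object requiring a second cumulant expansion.

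Two cautions worth making explicit, since your summary slightly underplays them. First, the claim that ``careful bookkeeping'' shows every surviving term is $O_\prec((1+\phi)^6\zeta^2)$ is too optimistic as stated: with only the easy bound $\partial_{ij}^l Q=O_\prec((1+\phi)^{l+1}\zeta^2)$ from Lemma \ref{lem:estimation_of_Q}(i), the terms with $k\in\{1,2\}$ do \emph{not} close; one genuinely needs the refined bound $Q=O_\prec((1+\phi)^4\zeta^3)$ of Lemma \ref{lem:estimation_of_Q}(ii). This extra factor of $\zeta$ comes from a cancellation \emph{inside} the cumulant expansion for $Q$ that mirrors the $\cal K$ cancellation, and obtaining it requires the $Q$-specific self-improving high-moment scheme \eqref{Q_self_improv}; it is not merely a supporting estimate but the indispensable new input distinguishing Proposition \ref{prop:est2} from Proposition \ref{prop:est1}. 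Second, no self-improving loop is run on $\ul{B\cal D}$ itself, starting from $\ul{B\cal D}\prec 1+\phi$ or otherwise: once Lemma \ref{lem:estimation_of_Q} and Lemma \ref{prop:main_estimations_averaging} are in hand, a single application of H\"older and Young to the moment recursion produces $\E|\ul{\cal D B}|^{2p}=O_\prec(((1+\phi)^6\zeta^2)^{2p})$ directly. The only self-improvement loops in this part of the argument are the ones for $\cal D$ (used to prove Proposition \ref{prop:est1}) and for $Q$, not for the averaged quantity.
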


In both proofs we expand the term $HG$ on the right-hand side of \eqref{def_D} using the cumulant expansion
(Lemma \ref{lem:cumulant_expansion}). Since $H$ is symmetric, for any differentiable function $f=f(H)$ we set
\begin{equation} \label{diff}
\partial_{ij}f(H)\deq\frac{\partial }{\partial H_{ij}}f(H)=\frac{\partial }{\partial H_{ji}}f(H)\deq \frac{\mathrm{d}}{\mathrm{d}t}\Big{|}_{t=0} f\pb{H+t\,\Delta^{ij}}\,,
\end{equation}
where $\Delta^{ij}$ denotes the matrix whose entries are zero everywhere except at the sites $(i,j)$ and $(j,i)$ where they are one: $\Delta^{ij}_{kl} =(\delta_{ik}\delta_{jl}+\delta_{jk}\delta_{il})(1+\delta_{ij})^{-1}$. By differentiation of $G=(H + A -z)^{-1}$ we get
\begin{equation} \label{3.15}
\partial_{ij} G_{\f x\f y}=-(1+\delta_{ij})^{-1}(G_{\f x i}G_{j\f y}+G_{\f x j}G_{i\f y})\,.
\end{equation}
Combining \eqref{3.15}, the assumption $\norm{B}=O(1)$, and \eqref{eq:bound_on_G}, a simple induction gives
\begin{equation}\label{eq:bounds_on_derivatives_of_G}
|\partial^m_{ij}G_{\f x\f y}| \prec (1+\phi)^{m+1},\qquad |\partial^m_{ij}(GB)_{\f x\f y}|\prec (1+\phi)^{m+1}
\end{equation}
for all $\f x,\f y\in \mathbb{X}$ and $m\geq 0$.

\subsection{Proof of Proposition \ref{prop:est1}}

Before starting the proof, we record a preliminary estimate on the derivatives of $\cal D$.

\begin{lemma} \label{3.1}
	Suppose that the assumptions of Theorem \ref{thm:main_general} hold. Then for any fixed $l \ge 0$ we have
	\begin{equation*}
	\partial_{ij}^l \cal D =O_{\prec}(N^{1/2}(1+\phi)^{l+2}\zeta )\,.
	\end{equation*}
\end{lemma}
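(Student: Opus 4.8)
The plan is to expand $\cal D$, and hence each $\partial_{ij}^l \cal D$, using the cumulant expansion of Lemma \ref{lem:cumulant_expansion}, and then to bound every resulting term using the a priori estimates collected in Subsection \ref{subsec:preliminarties_to_proofs}. Recall from \eqref{def_D} that $\cal D_{\f v\f w} = (HG + \cal K)_{\f v\f w}$, where $\cal K_{\f v\f w} = \frac{1}{N}\sum_j G_{jj} G_{\f v^j \f w}$. The term $(HG)_{\f v\f w} = \sum_{i,j} v_i H_{ij} G_{j\f w}$, so applying Lemma \ref{lem:cumulant_expansion} to each factor $H_{ij}$ (treating $H_{ij}$ as the variable $h$ and the rest as the function $f$), together with the cumulant bounds $\cal C_{k+1}(H_{ij}) = O_k(N^{-(k+1)/2})$ of Lemma \ref{lem:cumulant_factos_estimate}, produces a hierarchy of terms indexed by the number $k \geq 1$ of derivatives falling on the resolvent factors. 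Each derivative $\partial_{ij}$ of a resolvent entry splits it into two resolvent entries via \eqref{3.15}, introducing a fresh summation index and, crucially, always forcing at least one "open index" adjacent to the site $(i,j)$. Since we are ultimately proving a bound on $\partial_{ij}^l \cal D$, I would first apply the Leibniz rule to distribute the $l$ external derivatives, note via \eqref{eq:bounds_on_derivatives_of_G} that each external derivative costs at most a factor $(1+\phi)$, and then run the cumulant expansion on the (single remaining) $H_{ij}$ factor — or more precisely, observe that $\partial_{ij}^l(HG)$ still contains a linear-in-$H$ piece plus derivative terms, and the cumulant expansion applies uniformly.

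The key organizing principle is that the leading term $k=1$ of the cumulant expansion of $(HG)_{\f v\f w}$, namely $\sum_{i,j} v_i \cal C_2(H_{ij}) \E[\partial_{ij} G_{j\f w}] $-type contributions, is designed to cancel against $\cal K$ (this is the whole point of subtracting $\cal J$ and isolating $\cal D$): indeed $\cal C_2(H_{ij}) = (1+\delta_{ij})N^{-1} s_{ij}$ and $\partial_{ij} G_{j\f w} = -(1+\delta_{ij})^{-1}(G_{ji}G_{j\f w} + G_{jj}G_{i\f w})$, so the $G_{jj}G_{i\f w}$ piece reconstructs $-\cal K_{\f v\f w}$ after summing, while the $G_{ji}G_{j\f w}$ piece is precisely $-\cal J_{\f v\f w}$-like and is controlled by the Ward identity as in \eqref{eq:Ward_example_1}. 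After this cancellation, what remains are: (i) higher-cumulant terms $k \geq 2$, each carrying a prefactor $N \cdot N^{-(k+1)/2} = N^{(1-k)/2}$ from the $N$ summation indices $i,j$ times the cumulant bound, which for $k\geq 2$ is $\leq N^{-1/2}$ and combines with a Ward-type gain to give a power of $\zeta$; and (ii) the remainder term $\cal R_{\ell+1}$, which by \eqref{R_l+1} and Assumption \ref{ass:H} (all moments of $\sqrt N H_{ij}$ bounded, hence $H_{ij}$ has Gaussian-type tails up to polynomial corrections) is negligible once $\ell$ is chosen large enough depending on the target power of $N$. The bookkeeping: each term has the schematic form $N^{(1-k)/2}(1+\phi)^{l + (\text{number of resolvent factors})} \times (\text{sum of products of resolvent entries over free indices})$, and each free index summed against a Ward identity yields a factor $\sqrt{(\|\im M\|+\phi+\eta)/(N\eta)} = \zeta$ — but here we only need ONE such $\zeta$, since the target bound $N^{1/2}(1+\phi)^{l+2}\zeta$ is relatively crude (it is this lemma's role to be a rough input, later self-improved).

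Concretely, I expect the proof to run as follows. First, fix $\ell$ large (depending on $l$ and on how many powers of $N$ we can afford to lose, so say $\ell = \ell(l)$ suffices — any $\ell \geq 2l + 10$ works since the remainder decays faster than any polynomial). Second, write $\partial_{ij}^l \cal D = \partial_{ij}^l(HG) + \partial_{ij}^l \cal K$; bound $\partial_{ij}^l \cal K$ directly using \eqref{eq:bounds_on_derivatives_of_G} and the Ward identity — it is $O_\prec((1+\phi)^{l+2}\zeta)$, already better than claimed. Third, for $\partial_{ij}^l(HG)$, apply Leibniz and for the terms where all $l$ derivatives have already landed, apply Lemma \ref{lem:cumulant_expansion} in the variable $H_{ij}$; the $k=1$ term partially cancels $\partial_{ij}^l\cal K$ and leaves a $\cal J$-type term bounded by $(1+\phi)^{l+3}\zeta$ via \eqref{eg}--\eqref{egWard}-type estimates; the $k \geq 2$ terms are each bounded, using the cumulant estimate $N^{-(k+1)/2}$, the index count, \eqref{eq:bounds_on_derivatives_of_G}, and the Ward estimates \eqref{eq:Ward_example_1}--\eqref{eq:Ward_example_4}, by $O_\prec(N^{(1-k)/2}(1+\phi)^{O(l+k)}\zeta) \leq O_\prec((1+\phi)^{l+2}\zeta)$ after absorbing, and the total gains a harmless $N^{1/2}$ from the single $\sum_{i}$ or $\sum_j$ not compensated by a Ward factor in the worst term; the remainder $\cal R_{\ell+1}$ is $O_\prec(N^{-10})$, say. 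The main obstacle — and the only place requiring genuine care rather than bookkeeping — is tracking, term by term in the cumulant expansion of $\partial_{ij}^l(HG)$, exactly how the free summation indices pair up so that the Ward identity can be applied (one must first sum over $i$, to exploit the isotropic a priori bound $G_{\f v i}$, before summing over $j$), and checking that no term is missing a Ward-summable index that would force an extra, unaffordable, factor of $N^{1/2}$; the factor-of-$N^{1/2}$ slack in the statement is precisely what makes this manageable here, with the optimal bound recovered later by the self-improvement scheme of Lemma \ref{lem:self-improv}.
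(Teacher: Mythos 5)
Your proposal does not match the paper's proof, and more importantly it rests on a misuse of the cumulant expansion that cannot be repaired within the scope of this lemma. The cumulant expansion of Lemma \ref{lem:cumulant_expansion} is an identity for the number $\E[h\cdot f(h)]$; applying it termwise to $\sum_{i,j}v_iH_{ij}G_{j\f w}$ produces an expansion of the deterministic quantity $\E[(HG)_{\f v\f w}]$, and after the cancellation you describe, of $\E[\cal D_{\f v\f w}]$. That is a bound on an expectation, not a stochastic-domination bound on the random variable $\cal D_{\f v\f w}$ (which fluctuates on a much larger scale than its mean). To convert a cumulant expansion into a high-probability bound one must first estimate high moments $\E|\cal D_{\f v\f w}|^{2p}$ and then apply Markov's inequality --- but that is precisely the content of Proposition \ref{prop:est1} and Lemma \ref{prop:main_estimations}, whose self-improving scheme \emph{needs Lemma \ref{3.1} as its a priori input}. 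So the route you propose is circular: you would be invoking the machinery whose bootstrap seed this lemma is supposed to supply.

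The paper's proof is elementary and uses no probabilistic expansion at all. For $l=0$, one writes $|\cal D_{\f v\f w}| \le |(HG)_{\f v\f w}| + N^{-1}\sum_j |G_{jj}G_{\f v^j\f w}| \prec |(HG)_{\f v\f w}| + (1+\phi)^2$ by \eqref{eq:bound_on_G}, and then bounds the first term crudely via the operator-norm bound $\norm H \prec 1$ (Lemma \ref{lem:H_bound}) and the Ward identity: $|(HG)_{\f v\f w}| \le \norm H\,|G\f w| \prec \sqrt{\im G_{\f w\f w}/\eta} \prec N^{1/2}\zeta$. Note that the factor $N^{1/2}$ is intrinsic to the single Ward application on $|G\f w|^2 = \sum_j|G_{j\f w}|^2$ (not a leftover from unpaired summation indices as you suggest), which is exactly why the bound is crude and why no cumulant bookkeeping is required. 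For $l\geq 1$ the proof is an induction on $l$: differentiating \eqref{def_D} gives \eqref{diffD}--\eqref{3.23}, and the Leibniz rule combined with \eqref{eq:bounds_on_derivatives_of_G} and the induction hypothesis yields the claimed $(1+\phi)$-growth in $l$. You also cannot apply \eqref{eq:bounds_on_derivatives_of_G} directly to $\partial_{ij}^m\cal D$, since that estimate is stated only for entries of $G$ and $GB$; bounding $\partial_{ij}^m\cal D$ is precisely what the induction establishes.
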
 
\begin{proof}
	We proceed by induction. Fix $\f v, \f w \in \bb S$. Using \eqref{eq:bound_on_G}, we obtain
	\begin{equation*}
	|\cal D_{\f v\f w }| \leq |(HG)_{\f v \f w}|+\frac{1}{N} \sum\limits_j |G_{jj}G_{\f v^j \f w}| \prec |(HG)_{\f v \f w}|+ (1+\phi)^2\,.
	\end{equation*}
	Using the bound \eqref{eq:bound_on_G}, the Ward identity, and Lemma \ref{lem:H_bound}, we therefore get
	\begin{equation*}
	|(HG)_{\f v \f w}| \le \norm{H}  \abs{G\f w} \prec \abs{G\f w} = \sqrt{\frac{\im G_{\f w \f w}}{\eta}} \prec N^{1/2}\zeta\,.
	\end{equation*}
	Since $\zeta \ge N^{-1/2}$ we conclude that
	\begin{equation*}
	|\cal D_{\f v\f w }| \prec N^{1/2}(1+\phi)^2\zeta\,.
	\end{equation*}
	This completes the proof for the case $l = 0$.

	Next, let $l \geq 1$ and suppose that $\partial^m_{ij}\cal D=O_{\prec}(N^{1/2}(1+\phi)^{m+2}\zeta )$ for $0 \leq m \leq l-1$. Using \eqref{3.15} we find
	\begin{equation} \label{diffD}
	(1+\delta_{ij})\partial_{ij} \cal D_{\f v \f w}=-(\cal D_{\f v i}G_{j \f w}+\cal D_{\f v j}G_{i \f w})+U^{ij}_{\f v \f w}\,,
	\end{equation}
	where 
	\begin{equation} \label{3.23}
	U^{ij}_{\f v \f w} \deq v_i G_{j \f w}+v_j G_{i \f w}-\frac{1}{N}\sum\limits_{r}G_{ri}G_{jr}G_{\f v^r \f w}-\frac{1}{N}\sum\limits_{r}G_{rj}G_{ir}G_{\f v^r \f w}\,.
	\end{equation}
	Thus
	\begin{equation} \label{3.24}
	(1+\delta_{ij})\partial_{ij}^l \cal D_{\f v \f w}=-\partial^{l-1}_{ij}(\cal D_{\f v i}G_{j \f w}+\cal D_{\f v j}G_{i \f w})+\partial^{l-1}_{ij}U^{ij}_{\f v \f w}\,.
	\end{equation} 
	We deal with each of the terms on the right-hand side separately. The term $\partial^{l-1}_{ij}(\cal D_{\f v i}G_{j \f w})$
	is estimated using \eqref{eq:bounds_on_derivatives_of_G} as
	\begin{multline*}
	\absb{\partial^{l-1}_{ij}(\cal D_{\f v i}G_{j \f w})} \leq \sum_{m=0}^{l-1} \binom{l-1}{m} \absb{ (\partial^{l-1-m}_{ij}\cal D_{\f v i})(\partial_{ij}^m G_{ j \f w})} \prec \sum_{m=0}^{l-1}(1+\phi)^{m+1} \abs{\partial^{l-1-m}_{ij}\cal D_{\f v i}}\\
	\prec \sum_{m=0}^{l-1}(1+\phi)^{m+1} N^{1/2}(1+\phi)^{l-m+1}\zeta = O(N^{1/2}(1+\phi)^{l+2}\zeta)\,,
	\end{multline*}
	where in the third step we used the induction assumption. The second term of \eqref{3.24} is estimated analogously. Finally, by \eqref{3.15}, \eqref{eq:bounds_on_derivatives_of_G}, \eqref{3.23}, and the bound $|v_i| \le 1$, we can estimate the last term of \eqref{3.24} as
	\begin{equation*}
	\partial^{l-1}_{ij}U^{ij}_{\f v \f w} \prec (1+\phi)^{l-1+3} =(1+\phi)^{l+2} \le N^{1/2}(1+\phi)^{l+2} \zeta\,.
	\end{equation*}
	This concludes the proof.
\end{proof}

Now we turn to the proof of Proposition \ref{prop:est1}. Fix a constant $p\in\N$. Then
\begin{equation}\label{eq:proof_of_sc_equation_1}
\E [|\cal D_{\f v\f w}|^{2p}]	
=\E[(\cal K+HG)_{\f v\f w}\cal D_{\f v\f w}^{p-1}\overline{\cal D}_{\f v\f w}^p]
=\E[\cal K_{\f v\f w}\cal D_{\f v\f w}^{p-1}\overline{\cal D}_{\f v\f w}^p]
+\sum_{i,j}{v}_i\E[H_{ij}G_{j\f w}\cal D_{\f v\f w}^{p-1}\overline{\cal D}_{\f v\f w}^p]\,.
\end{equation}

Using the cumulant expansion (Lemma \ref{lem:cumulant_expansion}) for the second term in \eqref{eq:proof_of_sc_equation_1} with $h=H_{ij}$ and
\begin{equation} \label{f}
f=f_{ij}(H)=G_{j\f w}\cal D_{\f v\f w}^{p-1}\overline{\cal D}_{\f v\f w}^p\,,
\end{equation} we obtain
\begin{equation}\label{eq:proof_of_sc_equation_2}
\E|\cal D_{\f v\f w}|^{2p}
=\E[\cal K_{\f v\f w}\cal D_{\f v\f w}^{p-1}\overline{\cal D}_{\f v\f w}^p]
+\sum_{k=1}^{\ell}X_k+\sum_{i,j}{v}_i  \cal R_{\ell+1}^{ij}\,,
\end{equation}
where we used the notation 
\begin{equation} \label{X_k}
X_k\deq\sum_{i,j}{v}_i\bigg(\frac{1}{k!}\cal C_{k+1}(H_{ij}) \, \E\qb{\partial_{ij}^{k}\big(G_{j\f w}\cal D_{\f v \f w}^{p-1}\overline{\cal D}_{\f v \f w}^p\big)}\bigg)\,.
\end{equation}
Note that the sum in \eqref{eq:proof_of_sc_equation_2} begins from $k = 1$ because $\cal C_1(H_{ij}) = 0$.
Here $\ell$ is a fixed positive integer to be chosen later, and $\cal R_{\ell+1}^{ij}$ is a remainder term defined analogously to $\cal R_{\ell+1}$ in (\ref{R_l+1}). More precisely, for any $t>0$ we have the bound
\begin{equation} \label{tau1}
\begin{aligned}	
\cal R_{\ell+1}^{ij}&=O(1) \cdot \bigg(\E  \sup_{|x| \le |H_{ij}|}\big| \partial_{ij}^{\ell + 1}f({H}^{ij}+x\Delta^{ij})\big|^2\cdot\E\big| {H^{2\ell+4}_{ij}}\mathbf{1}_{|H_{ji}|>t}\big|\bigg)^{1/2}\\&\ \ +O(1) \cdot \E |H_{ij}|^{\ell+2} \cdot  \E \qbb{ \sup_{|x| \le t}\big| \partial_{ij}^{\ell + 1}f({H}^{ij}+x\Delta^{ij})\big|}\,,
\end{aligned}
\end{equation}
where we defined ${H}^{ij}\deq H- H_{ij}\Delta^{ij}$, so that the matrix ${H}^{ij}$ has zero entries at the positions $(i,j)$ and $(j,i)$.
The proof of Proposition \ref{prop:est1} is broken down into the following lemma.

\begin{lemma}\label{prop:main_estimations}
Suppose that $\cal D=O_{\prec}(\lambda)$ for some deterministic $\lambda \ge \wt \zeta$.	Fix $p \ge 2$. Under the assumptions of Theorem \ref{thm:main_general}, we have the following estimates.
	\begin{enumerate}
		\item $\E[\cal K_{\f v\f w}\cal D_{\f v\f w}^{p-1}\overline{\cal D}_{\f v\f w}^p]+X_1= O_\prec (\wt \zeta) \cdot\E|\cal D_{\f v\f w}|^{2p-1} +O_\prec(\wt \zeta \lambda)\cdot\E|\cal D_{\f v\f w}|^{2p-2}$.
		\item For $k \geq 2$, $X_k= \sum_{n=1}^{2p}  O_\prec((\wt \zeta \lambda^2)^{n/3})\cdot\E|\cal D_{\f v \f w}|^{2p-n}$.
		\item For any $D>0$ there exists $\ell \equiv \ell(D) \geq 1$  such that $\cal R_{\ell+1}^{ij}=O(N^{-D})$ uniformly for all $i,j\in \qq{N}$.
	\end{enumerate}
\end{lemma}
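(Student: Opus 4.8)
The plan is to estimate the three contributions to $\E|\cal D_{\f v\f w}|^{2p}$ in \eqref{eq:proof_of_sc_equation_2} separately, organizing each so that a factor of $\cal D_{\f v\f w}$ or $\ol{\cal D}_{\f v\f w}$ (producing $\E|\cal D_{\f v\f w}|^{2p-1}$, estimated ultimately via a priori bounds) or a factor of $\lambda$ (producing a power $\E|\cal D_{\f v\f w}|^{2p-n}$ with a prefactor that is a power of $\wt\zeta\lambda^2$) is extracted. For part (iii), the remainder estimate, I would simply feed the derivative bounds \eqref{eq:bounds_on_derivatives_of_G} and the moment bounds of Assumption \ref{ass:H} into \eqref{tau1}: the derivatives $\partial_{ij}^{\ell+1}f$ are bounded by $N^{O(1)}$ (using that $\cal D$ is polynomially bounded), while $\E|H_{ij}|^{\ell+2} = O(N^{-(\ell+2)/2})$ and, choosing $t = N^{-1/4}$ say, $\E|H_{ij}^{2\ell+4}\f 1(|H_{ij}| > t)| = O(N^{-D'})$ for any $D'$ by Markov and arbitrarily high moments; summing the $N^2$ terms $v_i\cal R_{\ell+1}^{ij}$ still gives $O(N^{-D})$ once $\ell$ is large enough. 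This part is routine.

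For part (ii), the generic term $k \ge 2$: here $\cal C_{k+1}(H_{ij}) = O(N^{-(k+1)/2})$, and I would apply the Leibniz rule to $\partial_{ij}^k(G_{j\f w}\cal D_{\f v\f w}^{p-1}\ol{\cal D}_{\f v\f w}^p)$, distributing the $k$ derivatives among the $G_{j\f w}$ factor and the $2p-1$ factors of $\cal D_{\f v\f w}$/$\ol{\cal D}_{\f v\f w}$. Each derivative hitting a $\cal D$ factor converts it, via \eqref{diffD}, into a sum of terms of the form $\cal D_{\f v i}G_{j\f w}$, $\cal D_{\f v j}G_{i\f w}$, or the $U^{ij}$-terms; each derivative hitting $G_{j\f w}$ produces Green function factors by \eqref{3.15}. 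The key accounting is: if $n$ of the $2p-1$ structural $\cal D$-factors survive being differentiated, we get $\E[(\cdots)\cal D^{\,2p-1-n}(\cdots)]$ modulo bars, and the remaining $k - (2p-1-n)$... more carefully, the surviving $\cal D$-factors are $2p-n$ in number after accounting, and we must show the prefactor from summing over $i,j$ using Ward identities (as in the examples \eqref{eq:Ward_example_1}--\eqref{eq:Ward_example_5}) together with $\cal C_{k+1} = O(N^{-(k+1)/2})$ is $O_\prec((\wt\zeta\lambda^2)^{n/3})$. The power $n/3$ reflects that each differentiated $\cal D$-factor (bounded by $\lambda$) is accompanied by two Ward summations producing $\zeta$'s, and $\cal D = O_\prec(\lambda)$ with $\lambda \ge \wt\zeta$; one checks the exponent bookkeeping case by case, the worst case being $k = 2$ and all derivatives landing on $\cal D$-factors, cf.\ \eqref{eq:Ward_example_4}.

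Part (i) is the heart of the matter: the term $\E[\cal K_{\f v\f w}\cal D_{\f v\f w}^{p-1}\ol{\cal D}_{\f v\f w}^p]$ must be combined with $X_1 = \sum_{i,j} v_i \cal C_2(H_{ij})\,\E[\partial_{ij}(G_{j\f w}\cal D_{\f v\f w}^{p-1}\ol{\cal D}_{\f v\f w}^p)]$, and a \textbf{leading-order cancellation} must be exhibited between them. Writing $\cal C_2(H_{ij}) = (1+\delta_{ij})^{-1}N^{-1}s_{ij}$ and using \eqref{diffD}, the derivative hitting $G_{j\f w}$ gives $-(1+\delta_{ij})^{-1}(G_{ji}G_{j\f w} + G_{jj}G_{i\f w})$ by \eqref{3.15}; the $G_{jj}G_{i\f w}$ piece summed against $v_i s_{ij}/N$ reconstructs exactly $-\cal K_{\f v\f w}$ (by the definition of $\cal K$ in \eqref{eg} and the tilted vector $\f v^j$), cancelling the $\cal K$ term up to the precise pairing. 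What remains is: (a) the $G_{ji}G_{j\f w}$ term, which after summing over $i$ against $v_i$ gives $G_{\f v j}$ and is thus $\frac{1}{N}\sum_j s_{ij} G_{\f v j}G_{j\f w}\cdot(\cdots)$, controlled by $\zeta^2$ via Ward, contributing $O_\prec(\wt\zeta)\E|\cal D|^{2p-1}$-type terms (in fact this term is itself $\cal J$-like and small); and (b) the derivatives of the $\cal D^{p-1}\ol{\cal D}^p$ block, where each $\partial_{ij}$ of a $\cal D$-factor produces, via \eqref{diffD}, either a $\cal D_{\f v i}G_{j\f w}$/$\cal D_{\f v j}G_{i\f w}$ term or a $U^{ij}$ term — the former carrying the extra factor $\cal D = O_\prec(\lambda)$, hence giving $O_\prec(\wt\zeta\lambda)\E|\cal D|^{2p-2}$ after one Ward-type summation. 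The main obstacle, and the step requiring the most care, is getting this cancellation of $\cal K$ against the relevant part of $X_1$ to work out exactly with the $(1+\delta_{ij})$ combinatorial factors and the tilted vectors $\f v^j$, and then verifying that every leftover term genuinely carries either a spare $\cal D$-factor or a spare power of $\wt\zeta$ beyond what the naive power count gives — this is where the a priori input $G - M = O_\prec(\phi)$ and the $\norm{\im M}$ in the definition \eqref{eq:psi} of $\zeta$ are essential, since they are what make the Ward estimates \eqref{eq:Ward_example_1}--\eqref{eq:Ward_example_5} produce $\zeta$ rather than the cruder $(N\eta)^{-1/2}$.
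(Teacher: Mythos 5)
Your proposals for parts (i) and (ii) follow the paper's approach. For (i) you correctly identify the key cancellation: in $X_1$ the variance $\cal C_2(H_{ij}) = \E[H_{ij}^2]$ contracted against the derivative of $G_{j\f w}$ (via \eqref{3.15}) reproduces exactly $-\E[(\cal S(G)G)_{\f v\f w}\cal D^{p-1}\overline{\cal D}^p] = -\E[(\cal J+\cal K)_{\f v\f w}\cal D^{p-1}\overline{\cal D}^p]$, and the $\cal K$ part cancels the explicit $\cal K$ term, leaving $-\cal J$ (small by \eqref{egWard}). The leftover terms arising from $\partial_{ij}$ hitting the $\cal D$-block are then estimated via \eqref{diffD}, the Ward identity, and the a priori bound $\cal D = O_\prec(\lambda)$, giving the $O_\prec(\wt\zeta\lambda)\E|\cal D|^{2p-2}$ contribution. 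For (ii), the decomposition by how many derivatives land on $G_{j\f w}$ versus the $\cal D$-factors (with $q$ of the latter hit) is exactly the paper's; the cases $q\le k-2$ close off via Lemma~\ref{3.1}, while the cases $q\in\{k-1,k\}$ require the refined expansion \eqref{3.46} and careful Ward bookkeeping, which you gesture at but do not carry out — this is where most of the real work is, though the structure is right.

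The genuine gap is in part (iii), which you call routine. Your argument is to bound $\sup_{|x|\le t}|\partial_{ij}^{\ell+1}f|$ by "$N^{O(1)}$" and let $\E|H_{ij}|^{\ell+2} = O(N^{-(\ell+2)/2})$ kill it by taking $\ell$ large. But the implicit exponent in that "$O(1)$" grows linearly in $\ell$: each derivative via \eqref{3.15} produces another entry of $G$, bounded only deterministically by $\|G\|\le\eta^{-1}\le N^{1-\tau}$, so $\partial_{ij}^{\ell+1}f$ is deterministically of size $N^{(1-\tau)\ell + O_p(1)}$. Multiplying by $\E|H_{ij}|^{\ell+2}=O(N^{-(\ell+2)/2})$ gives $N^{\ell(1/2-\tau)+O_p(1)}$, which for $\tau<1/2$ \emph{grows} in $\ell$ rather than decaying. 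Taking $\ell$ large makes things worse, not better. This is precisely why the paper's proof of Lemma~\ref{prop:main_estimations}~(iii) sets up the resolvent expansions \eqref{eqn: A.11}--\eqref{eqn: A.12} with $t = N^{\tau/5-1/2}$ (not $N^{-1/4}$; with the latter, $(\hat GS)^2G$ in \eqref{eqn: A.12} is not small): they upgrade the bound on the Green function entries of the perturbed matrix to $\sup_{|x|\le t}|G_{\f x\f y}(H^{ij}+x\Delta^{ij})|\prec 1+\phi$ — cf.\ \eqref{3.67}--\eqref{3.68} — so that each derivative costs a factor $1+\phi\le 2N^{\tau/10}$ rather than $N^{1-\tau}$. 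Only then does the product with $\E|H_{ij}|^{\ell+2}$ decay in $\ell$, and one can choose $\ell(D,p)$. So the remainder estimate is not a soft union-bound-and-Markov argument; it needs the a priori isotropic bound fed through a resolvent perturbation step at the specific scale $t$, and this is a required ingredient your sketch is missing.
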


Indeed, combining Lemma \ref{prop:main_estimations} for $\ell \equiv \ell(2p+2)$ together with \eqref{eq:proof_of_sc_equation_2} we obtain 
\begin{align*}
\E |\cal D_{\f v\f w}|^{2p} &= 
\sum_{n=1}^{2p}  O_{\prec}((\wt \zeta \lambda^2)^{n/3})\cdot\E|\cal D_{\f v \f w}|^{2p-n}
+O(N^{-2p})
\\
&\leq
\sum_{n=1}^{2p}  O_{\prec}((\wt \zeta \lambda^2)^{n/3})\cdot\big(\E|\cal D_{\f v \f w}|^{2p}\big)^{(2p-n)/2p}
+O(N^{-2p})\,,
\end{align*}
 where in the second step we used H\"older's inequality. Since $\lambda \geq \wt \zeta \geq N^{-1/2}$, we conclude that $N^{-2p} \leq (\wt \zeta \lambda^2)^{2p/3}$, and therefore
$\E |\cal D_{\f v\f w}|^{2p}=O_{\prec}((\wt \zeta \lambda^2)^{2p/3})$. Since $p$ was arbitrary, we conclude from Markov's inequality the implication
\begin{equation} \label{D_self_improv}
\cal D=O_{\prec}(\lambda) \qquad \Longrightarrow \qquad \cal D=O_{\prec}((\wt \zeta \lambda^2)^{1/3})\,,
\end{equation}
for any deterministic $\lambda \geq \wt \zeta$. Moreover, by Lemma \ref{3.1} we have the a priori bound $\cal D=O_{\prec}((1+\phi)^2 N^{1/2}\zeta)$. Hence, an iteration of \eqref{D_self_improv}, analogous to Lemma \ref{lem:self-improv}, yields $\cal D = O_\prec(\wt \zeta)$. This concludes the proof of Proposition \ref{prop:est1}.

What remains, therefore, is the proof of Lemma \ref{prop:main_estimations}.

\begin{proof}[Proof of Lemma \ref{prop:main_estimations} (i)]
	Note that $\cal C_2(H_{ij})=\E [H^2_{ij}]$ since $\E H_{ij} = 0$. Hence,
	\begin{equation*}
	\begin{aligned}
	X_1 &= \sum_{i,j}{v}_{i}\cal C_2(H_{ij})\E\big[\partial_{ij}\big(G_{j\f w}\cal D_{\f v\f w}^{p-1}\overline{\cal D}_{\f v\f w}^p\big)\big]\\
	& =-\sum_{i,j}{v}_i\E[H_{ij}^2]\cdot\E\big[(G_{ji}G_{j\f w}+G_{jj}G_{i\f w})(1+\delta_{ij})^{-1}\cal D_{\f v\f w}^{p-1}\overline{\cal D}_{\f v\f w}^p\big]
	+\sum_{i,j}v_i\E\big[H_{ij}^2\big]\E\big[G_{j\f w}\partial_{ij}\big(\cal D_{\f v\f w}^{p-1}\overline{\cal D}_{\f v\f w}^p\big)\big]\\
	& = -\mathbb{E}\big[(\mathcal{S}(G)G))_{\mathbf{v}\mathbf{w}}\mathcal{D}_{\mathbf{v}\mathbf{w}}^{p-1}\overline{\mathcal{D}}_{\mathbf{v}\mathbf{w}}^{p}\big]+ \sum_{i,j}{v}_i\E\big[H_{ij}^2\big]\E\big[G_{j\f w}\partial_{ij}\big(\cal D_{\f v\f w}^{p-1}\overline{\cal D}_{\f v\f w}^p\big)\big]\,,
	\end{aligned}
	\end{equation*}	
	and consequently, recalling \eqref{eg}, we have
	\begin{equation}\label{eq:estimating_X_1_1}
	\E[\cal K_{\f v\f w}\cal D_{\f v\f w}^{p-1}\overline{\cal D}_{\f v\f w}^p]+X_1
	= -\E[\cal J_{\f v\f w}\cal D_{\f v\f w}^{p-1}\overline{\cal D}_{\f v\f w}^p]+\sum_{i,j}{v}_i\E[H_{ij}^2]\E[G_{j\f w}\partial_{ij}\big(\cal D_{\f v\f w}^{p-1}\overline{\cal D}_{\f v\f w}^p\big)]\,.
	\end{equation}
	Since \eqref{egWard} implies $\E[\cal J_{\f v\f w}\cal D_{\f v\f w}^{p-1}\overline{\cal D}_{\f v\f w}^p] = O_\prec( \wt \zeta ) \cdot\E|\cal D_{\f v\f w}|^{2p-1}$, it remains to estimate the second term on the right-hand side of \eqref{eq:estimating_X_1_1}.
	
    By \eqref{diffD} and \eqref{3.23} we have	
	\begin{equation} \label{viHij}
	\begin{aligned}
	&\sum_{i,j}{v}_i \E[H_{ij}^2] \, \E\big[G_{j\f w}\partial_{ij}\big(\cal D_{\f v\f w}^{p-1}\overline{\cal D}_{\f v\f w}^p\big)\big] \\
	=&\, (p-1)\sum_{i,j}{v}_i \E[H_{ij}^2] \, \E\big[G_{j\f w} \cal D_{\f v\f w}^{p-2}\overline{\cal D}_{\f v\f w}^{p}\partial_{ij}\cal D_{\f v\f w}\big] +p\sum_{i,j}{v}_i\E[H_{ij}^2] \, \E\big[G_{j\f w}|\cal D_{\f v\f w}|^{2p-2} \partial_{ij}\overline{\cal D}_{\f v\f w}\big]\\
	=&\,\frac{p-1}{N}\sum_{i,j}   \E\bigg[v_is_{ij} G_{j \f w}\cal D_{\f v\f w}^{p-2}\overline{\cal D}_{\f v\f w}^p\bigg(-\cal D_{\f v i}G_{j \f w}-\cal D_{\f v j}G_{i \f w}+v_i G_{j \f w}+v_j G_{i \f w}\\
	&-\frac{1}{N}\sum\limits_{r} G_{ri}G_{jr}G_{\f v^r \f w}-\frac{1}{N}\sum\limits_{r}G_{rj}G_{ir}G_{\f v^r \f w}\bigg)\bigg]+\frac{p}{N}\sum_{i,j} \E\bigg[v_is_{ij} G_{j\f w}|\cal D_{\f v\f w}|^{2p-2}\\
	&\bigg(-\ol{\cal D}_{\f v i}\ol G_{j \f w}-\ol{\cal D}_{\f v j}\ol G_{i \f w}+v_i \ol G_{j \f w}+v_j \ol G_{i \f w}-\frac{1}{N}\sum\limits_{r}\ol G_{ri}\ol G_{jr}\ol G_{\f v^r \f w}-\frac{1}{N}\sum\limits_{r}\ol G_{rj}\ol G_{ir}\ol G_{\f v^r \f w}\bigg)\bigg]\,.\\
	\end{aligned}
	\end{equation}
	Next, we show that each of the terms on the right-hand side of \eqref{viHij} is bounded in absolute value by  $O_\prec(\wt \zeta \lambda) \cdot\E|\cal D_{\f v\f w}|^{2p-2}$. 
We only discuss the first, third, and fifth terms on the right-hand side of \eqref{viHij}; all the others are estimated analogously. 
	
	We start with the first term on the right-hand side of \eqref{viHij}. By the assumption $\cal D=O_{\prec}(\lambda)$ we know
	\begin{equation*}
	\sup_{j \in \qq{N}}|\cal D_{\f v \f v ^j}| \prec \lambda\,.
	\end{equation*}
Thus, with the help of the estimate \eqref{eq:Ward_example_1}, we obtain 
	\begin{equation} \label{wardexample}
	\begin{aligned}
	& \bigg|\frac{p-1}{N}\sum_{i,j} \E\Big[-v_is_{ij}G_{j \f w}\cal D_{\f v\f w}^{p-2}\overline{\cal D}_{\f v\f w}^p\cal D_{\f v i}G_{j \f w}\Big]\bigg|=\bigg|\frac{p-1}{N}\sum_{j}\E\Big[G_{j \f w}\cal D_{\f v\f w}^{p-2}\overline{\cal D}_{\f v\f w}^p\cal D_{\f v \f v^{j}}G_{j \f w}\Big]\bigg|\\
	&\ = O_\prec(\lambda)\cdot\E\bigg[\frac{1}{N}\sum_j|G_{j\f w}|^2 \cdot \big|\cal D_{\f v\f w}\big|^{2p-2}\bigg]
	=O_\prec(\lambda\zeta^2)\cdot\E\big|\cal D_{\f v\f w}\big|^{2p-2}\,,
	\end{aligned}
	\end{equation}
	and the desired estimate follows from $\zeta \le \wt \zeta \le 1$.
	
	Next, we estimate the third term on the right-hand side of \eqref{viHij}. Using the Ward identity we get
	\begin{equation*}
	\begin{aligned}
	&\bigg|\frac{p-1}{N}\sum_{i,j}   \E\Big[-v_is_{ij}G_{j \f w}\cal D_{\f v\f w}^{p-2}\overline{\cal D}_{\f v\f w}^pv_iG_{j \f w}\Big]\bigg|\\
	&\ =O(1) \cdot\sum\limits_i |v_i|^2 \cdot\E \bigg[\frac{1}{N}\sum\limits_j|G_{j \f w}|^2\cdot \big|\cal D_{\f v\f w}\big|^{2p-2} \bigg]=O_{\prec}(\zeta^2)\cdot \E \big|\cal D_{\f v\f w}\big|^{2p-2}\,,
	\end{aligned}
	\end{equation*}
	and the result follows from the inequality $\zeta \leq \wt \zeta \leq \lambda$.
	
	Finally, we estimate the fifth term on the right-hand side of \eqref{viHij}. By the Cauchy-Schwarz inequality and the Ward identity we have
	\begin{equation*}
	\begin{aligned}
	&\bigg|\frac{p-1}{N^2}\sum_{i,j,r}   \E\Big[-v_is_{ij}G_{j \f w}\cal D_{\f v\f w}^{p-2}\overline{\cal D}_{\f v\f w}^pG_{ri}G_{jr}G_{\f v^{r} \f w}\Big]\bigg|=\bigg|\frac{p-1}{N^2}\sum_{j,r}\E\Big[G_{j \f w}\cal D_{\f v\f w}^{p-2}\overline{\cal D}_{\f v\f w}^pG_{r\f v^j}G_{jr}G_{\f v^{r} \f w}\Big]\bigg|\\
	&\ =O_{\prec}((1+\phi)^2)\cdot \frac{1}{N}\sum\limits_r \E \bigg[\frac{1}{N}\sum\limits_{j}|G_{j \f w}G_{j r}|\cdot \big|\cal D_{\f v \f w}\big|^{2p-2}\bigg]=O_{\prec}((1+\phi)^2\zeta^2)\cdot \E \big|\cal D_{\f v\f w}\big|^{2p-2}\,,
	\end{aligned}
	\end{equation*}
	and the desired estimate follows from $(1+\phi)\zeta \leq \wt \zeta \leq \lambda$. One can check that the estimates of other terms on the right-hand side of \eqref{viHij} follow analogously. We conclude that the term on the right-hand side of \eqref{eq:estimating_X_1_1} is bounded by
	\begin{equation*}
	O_\prec(\wt \zeta \lambda) \cdot\E|\cal D_{\f v\f w}|^{2p-2}\,,
	\end{equation*}
as claimed. 
\end{proof}

\begin{proof} [Proof of Lemma \ref{prop:main_estimations} (ii)]
	Fix $k \ge 2$. By Lemma \ref{lem:cumulant_factos_estimate} we have
	\begin{equation*}
	\begin{aligned}
	X_k &=O\big(N^{-\frac{k+1}{2}}\big)\cdot \sum_{i,j} \E\big|v_i\partial_{ij}^{k}\big(G_{j\f w}\cal D_{\f v \f w}^{p-1}\overline{\cal D}_{\f v \f w}^p\big)\big|\\&= O(N^{-\frac{k+1}{2}})\cdot \sum_{\substack{r,s,t\geq 0 \\ r+s+t=k}} \sum_{i,j}\E\big|v_i(\partial_{ij}^rG_{j \f w})(\partial_{ij}^s \cal D_{\f v\f w}^{p-1})(\partial_{ij}^t \overline{\cal D}_{\f v\f w}^p)\big|\,.
	\end{aligned}
	\end{equation*}
	As the sum over $r,s,t$ is finite it suffices to deal with each term separately. To simplify notation, we drop the complex conjugates of $Q$ (which play no role in the subsequent analysis), and estimate the quantity

	\begin{equation}\label{eq:whatever_kk}
	O(N^{-\frac{k+1}{2}}) \cdot \sum_{i,j}\E\big|v_i\big(\partial_{ij}^rG_{j \f w}\big)\big(\partial_{ij}^{k-r} \cal D_{\f v\f w}^{2p-1})\big|
	\end{equation}
	for $r = 0, \dots, k$.
	Computing the derivative $\partial_{ij}^{k - r}$, we find that \eqref{eq:whatever_kk} is bounded by a sum of terms of the form
	\begin{equation} \label{sqll}
	O(N^{-\frac{k+1}{2}}) \cdot \sum_{i,j}\E\,\Big|v_i\big(\partial_{ij}^rG_{j \f w
		}\big) \Big(\prod_{m=1}^{q}\big(\partial_{ij}^{l_m}\cal D_{\f v\f w}\big)\Big)\cal D_{\f v\f w}^{2p-1-q}\Big|\,,
	\end{equation}
	where the sum ranges over integers $q = 0, \dots, (k - r) \wedge (2p - 1)$ and $l_1, \dots, l_q \geq 1$ satisfying $l_1 + \cdots + l_q = k - r$.
	Using Lemma \ref{3.1}, we find that \eqref{sqll} is bounded by
	\begin{equation*}
	O_\prec(N^{-\frac{k+1}{2}}) \cdot \sum_{i,j}\E\Big[\big|v_i(\partial_{ij}^rG_{j \f w})\big| N^{\frac{q}{2}}(1+\phi)^{k-r+2q}\zeta^{q} |
	\cal D_{\f v\f w}|^{2p-1-q}\Big]\,.
	\end{equation*}	
	Note that by \eqref{3.15} the derivative $\partial_{ij}^rG_{j \f w}$ can be written as a sum of terms, each of which is a product of $r+1$ entries of the matrices $G$, with one entry of the form $G_{i \f w}$ or $G_{j \f w}$. For definiteness, we suppose that this entry is $G_{i \f w}$ in the product. Using \eqref{eq:bound_on_G}, the Cauchy-Schwarz inequality, and the Ward identity, we get
	\begin{equation*}
	\sum\limits_{i,j}\big|v_i\partial_{ij}^rG_{i \f w}\big| \prec (1+\phi)^{r}\cdot \sum\limits_{i,j}\big|v_iG_{i \f w}\big|
	\prec N^{\frac{3}{2}}(1+\phi)^r\zeta\,,
	\end{equation*}
	and therefore
	\begin{equation*} 
	\begin{aligned} 
	\eqref{sqll} &\prec N^{\frac{q+2-k}{2}} \cdot (1+\phi)^{k+2q}\zeta^{q+1}\E|\cal D_{\f v\f w}|^{2p-1-q}
	\\
	& = \big(N^{-\frac{1}{2}}(1+\phi)\big)^{k-2-q} \cdot (1+\phi)^{3q+2}\zeta^{q+1} \E|\cal D_{\f v\f w}|^{2p-1-q}
	\\ 
	&\leq\big(N^{-\frac{1}{2}}(1+\phi)\big)^{k-2-q} \cdot \wt \zeta^{q+1} \E|\cal D_{\f v\f w}|^{2p-1-q}\,,
	\end{aligned}
	\end{equation*}
	where for the last inequality we used $(1+\phi)^3\zeta= \wt \zeta$. Clearly, if $q\leq k-2$, we conclude that \eqref{sqll} is bounded by $\wt \zeta^{q+1} \E[|\cal D_{\f v\f w}|^{2p-1-q}]$, and hence the desired estimate follows from the assumption $\wt \zeta \le \lambda$.
	
	What remains, therefore, is to estimate \eqref{sqll}  for $q \geq k - 1$, which we assume from now on.
	Because $k \geq 2$ by assumption, we find that $q \geq 1$. Moreover, since $q \le k-r$, we find that $r \leq 1$. Thus, it remains to consider the three cases  $(r,q)=(0,k)$, $(r,q)=(1,k-1)$ and $(r,q)=(0,k-1)$. We deal with them separately.
	\paragraph{The case $(r,q)=(0,k)$} In this case $l_1=l_2=\dots=l_q=1$, so that \eqref{sqll} reads
	\begin{multline}\label{eq:whatever_31}
	O(N^{-\frac{k+1}{2}}) \cdot \sum_{i,j} \E\,\Big|v_iG_{j \f w} (\partial_{ij}\cal D_{\f v\f w})^k \cal D_{\f v\f w}^{2p-1-k}\Big|\\
	\prec
	N^{-\frac{k+1}{2}}\cdot(N^{\frac{1}{2}}(1+\phi)^3\zeta)^{k-2} \cdot \sum_{i,j} \E\,\Big|v_iG_{j \f w} (\partial_{ij}\cal D_{\f v\f w})^2 \cal D_{\f v\f w}^{2p-1-k}\Big|\\
	=
	\wt \zeta^{k-2}\cdot N^{-\frac{3}{2}}\cdot\sum\limits_{i,j}\E\,\Big|v_iG_{j \f w} (\partial_{ij}\cal D_{\f v\f w})^2 \cal D_{\f v\f w}^{2p-1-k}\Big|\,,
	\end{multline}
	where we used Lemma \ref{3.1} and the assumption $k \ge 2$.	
	By \eqref{diffD}, \eqref{eq:bound_on_G}, the Ward identity, the bound $\lambda \geq \zeta$, and the assumption $D=O_{\prec}(\lambda)$ we have
	\begin{equation} \label{3.46}
	(1+\delta_{ij})\partial_{ij}\cal D_{\f v\f w}
	=\ O_{\prec}((1+\phi)\lambda)+v_iG_{j \f w}+v_jG_{i \f w}\,.
	\end{equation}
	Thus,
	\begin{equation} \label{3.47}
	\sum\limits_{i,j}\big|v_iG_{j \f w}(\partial_{ij}\cal D_{\f v\f w})^2\big|\leq\sum\limits_{i,j} \Big| v_i G_{j \f w} (O_{\prec}((1+\phi)\lambda)+v_iG_{j \f w}+v_jG_{i \f w})^2\Big|\,.
	\end{equation}
	We expand the square on the right-hand side of \eqref{3.47} and estimate the result term by term, using \eqref{eq:bound_on_G}, the Ward identity, and $|\f v|=1$. For example we have
	\begin{equation*}
	\sum_{i,j}\big|v_iG_{j \f w} O_{\prec}((1+\phi)\lambda)^2\big|
	\prec N^{\frac{3}{2}}(1+\phi)^2 \zeta \lambda^2\,,
	\end{equation*}
	and
	\begin{equation*}
	\sum\limits_{i,j}\big|v_iG_{j \f w}v_jG_{i \f w}v_iG_{j \f w} \big| \prec (1+\phi) \cdot \sum\limits_{i} v_i^2 \sum\limits_{j}|G_{j \f w}|^2\prec N(1+\phi)\zeta^2 \leq N^{\frac{3}{2}}\wt \zeta \lambda^2\,,
	\end{equation*}
	where in the last step we used the estimates $(1+\phi)\zeta \le \wt \zeta$ and $N^{-1/2}\le \zeta \le \lambda$. By estimating other terms on the right-hand side of \eqref{3.47} in a similar fashion, we get the bound
	\begin{equation*}
	\sum\limits_{i,j}\big|v_iG_{j \f w}(\partial_{ij}\cal D_{\f v\f w})^2\big|\prec N^{\frac{3}{2}}\wt \zeta \lambda^2\,.
	\end{equation*}
	Together with \eqref{eq:whatever_31} we therefore conclude that
	\begin{equation*}
		O(N^{-\frac{k+1}{2}}) \cdot \sum_{i,j} \E\Big|v_iG_{j \f w} (\partial_{ij}\cal D_{\f v\f w})^k \cal D_{\f v\f w}^{2p-1-k}\Big| \prec \wt \zeta^{k-1}\lambda^2 \E |\cal D_{\f v \f w}|^{2p-k-1}\,.
	\end{equation*}
	The desired estimate then follows from the assumptions $\wt \zeta \le \lambda$ and $k \ge 2$.

	\paragraph{The case $(r,q)=(1,k-1)$} We apply a similar argument to the one from the previous case. In this case $l_1=l_2=\dots=l_{k-1}=1$, and \eqref{sqll} reads
	\begin{multline}\label{eq:whatever_101}
	O(N^{-\frac{k+1}{2}})\cdot  \sum_{i,j}\E\big|v_i(\partial_{ij}G_{j \f w})(\partial_{ij}\cal D_{\f v\f w})^{k-1} \cal D_{\f v\f w}^{2p-k}\big|\\\prec N^{-\frac{k+1}{2}}\cdot(N^{\frac{1}{2}}(1+\phi)^3\zeta)^{k-2} \cdot \sum_{i,j} \E\Big|v_i(\partial_{ij}G_{j \f w}) (\partial_{ij}\cal D_{\f v\f w}) \cal D_{\f v\f w}^{2p-k}\Big|\\
	=\wt \zeta^{k-2}\cdot N^{-\frac{3}{2}}\cdot\sum\limits_{i,j}\E\Big|v_i(\partial_{ij}G_{j \f w}) (\partial_{ij}\cal D_{\f v\f w}) \cal D_{\f v\f w}^{2p-k}\Big|\,,
	\end{multline}
	where in the second step we used Lemma \ref{3.1} and the assumption $k \ge 2$.
	Using \eqref{3.15} and \eqref{3.46} to rewrite $\partial_{ij}G_{j \f w}$ and one factor of $\partial_{ij}\cal D_{\f v \f w}$ respectively, we have
	\begin{equation} \label{3.53}
	\sum_{i,j} \big|v_i(\partial_{ij}G_{j \f w})(\partial_{ij}\cal D_{\f v \f w})\big|\le\sum_{i,j}\big|v_i(G_{ij}G_{j \f w}+G_{ji}G_{j \f w})(O_{\prec}((1+\phi)\lambda)+v_iG_{j \f w}+v_jG_{i \f w})\big|\,.
	\end{equation}
	  We expand the right-hand side of the above and estimate the result term by term, using \eqref{eq:bound_on_G}, the Ward identity, and $|\f v|=1$. For example we have 
	  \begin{equation*}
	  \begin{aligned}
	  \sum_{i,j}\big|v_iG_{ij}G_{j\f w}\cdot v_jG_{i \f w}\big|&\prec (1+\phi)\cdot \bigg(\sum_{i,j}v_i^2\,|G_{j \f w}|^2\bigg)^{\frac{1}{2}}\cdot \bigg(\sum_{i,j}v_j^2\,|G_{i \f w}|^2\bigg)^{\frac{1}{2}} \\
	  &\prec (1+\phi)N\zeta^2 \leq N^{\frac{3}{2}}\wt \zeta^2\,,
	  \end{aligned}
	  \end{equation*}
	  and
	\begin{equation*}
	\sum_{i,j}\big|v_iG_{ij}G_{j \f w} \cdot O_{\prec}((1+\phi)\lambda)\big| \prec
	N^{\frac{3}{2}}(1+\phi)^2\zeta \lambda\,.
	\end{equation*}
    By estimating the other terms on the right-hand side of \eqref{3.53} in a similar fashion, we get the bound
    \begin{equation*}
    \sum_{i,j} \big|v_i(\partial_{ij}G_{j \f w}(\partial_{ij}\cal D_{\f v \f w}))\big| \prec N^{\frac{3}{2}}\wt \zeta\lambda\,.
    \end{equation*}
    Together with \eqref{eq:whatever_101} we conclude that
    \begin{equation*}
    O(N^{-\frac{k+1}{2}})\cdot  \sum_{i,j}\E \big|v_i(\partial_{ij}G_{j \f w})(\partial_{ij}\cal D_{\f v\f w})^{k-1} \cal D_{\f v\f w}^{2p-k}\big| \prec \wt \zeta^{k-1}\lambda\, \E  |\cal D_{\f v \f w}|^{2p-k}.
    \end{equation*}
    The desired estimate then follows from the assumptions $\wt \zeta \le \lambda$ and $k \ge 2$.
	
	\paragraph{The case $(r,q)=(0,k-1)$} Since $l_1+\dots+l_{k-1}=k$ and $l_m\geq 1$ for every $m\in \{1,\dots,k-1\}$, there exists exactly one $m$ such that $l_m=2$ and the remaining $l_m$'s are $1$. Hence, \eqref{sqll} reads
	\begin{multline} \label{porsas1}
	O(N^{-\frac{k+1}{2}}) \cdot \sum_{i,j}\E\big|v_iG_{j \f w}(\partial_{ij}^2 \cal D_{\f v\f w})(\partial_{ij}\cal D_{\f v\f w})^{k-2} \cal D_{\f v\f w}^{2p-k}\big|\\
	\prec \wt \zeta^{k-2}\cdot N^{-\frac{3}{2}}\cdot \sum_{i,j}\E\big|v_iG_{j \f w}(\partial_{ij}^2 \cal D_{\f v\f w})\cal D_{\f v\f w}^{2p-k}\big| \,,
	\end{multline}
	where we used Lemma \ref{3.1}. By \eqref{diffD} we have
	\begin{equation*}
	\sum_{i,j} \big|v_iG_{j \f w}(\partial^2_{ij}\cal D_{\f v \f w})\big| \leq \sum_{i,j} \big|v_iG_{j \f w}\partial_{ij}(-\cal D_{\f v i}G_{j \f w}-\cal D_{\f v j}G_{i \f w}+U^{ij}_{\f v \f w})\big|\,,
	\end{equation*}
	where $U$ is defined as in \eqref{3.23}. Now we apply the differential $\partial_{ij}$ on the right-hand side, and estimate the result term by term. By using the bounds \eqref{eq:bound_on_G} and $\cal D=O_{\prec}(\lambda)$, the Ward identity, and $\abs{\f v}=1$ in a similar fashion as in the previous two cases, we get
	\begin{equation*}
	\sum_{i,j} \big|v_iG_{j \f w}(\partial^2_{ij}\cal D_{\f v \f w})\big|\prec N^{\frac{3}{2}}\wt \zeta\lambda\,.
	\end{equation*}
	Together with \eqref{porsas1} we conclude that
	\begin{equation*}
	O(N^{-\frac{k+1}{2}}) \cdot \sum_{i,j}\E\big|v_iG_{j \f w}(\partial_{ij}^2 \cal D_{\f v\f w})(\partial_{ij}\cal D_{\f v\f w})^{k-2} \cal D_{\f v\f w}^{2p-k}\big|\prec\wt \zeta^{k-1}\lambda\,\E\big|\cal D_{\f v\f w}\big|^{2p-k}\,.
	\end{equation*}
	The desired estimate then follows from the assumptions $\wt \zeta \le \lambda$ and $k \ge 2$. This concludes the proof.
\end{proof}

\begin{proof}[Proof of Lemma \ref{prop:main_estimations} (iii)]
The remainder term of the cumulation expansion was analysed in a slightly different context in \cite{HK}, and we shall follow the same method. 

Let $D>0$ be given. Fix $i,j$, and choose $t\deq N^{\tau/5-1/2}$ in  \eqref{tau1}. Define $S \deq  H_{ij}\Delta^{ij}$, where we recall the notation $\Delta^{ij}_{kl} \deq (\delta_{ik}\delta_{jl}+\delta_{jk}\delta_{il})(1+\delta_{ij})^{-1}$. Then we have $ H^{ij}= H-S$.
Let $\hat{G} \deq  (H^{ij}-E-\mathrm{i}\eta)^{-1}$.
We have the resolvent expansions
\begin{equation} \label{eqn: A.11}
\hat{G}=G+(GS)G+(GS)^2\hat{G}
\end{equation}
and
\begin{equation} \label{eqn: A.12}
G=\hat{G}-(\hat{G}S)\hat{G}+(\hat{G}S)^2G\,.
\end{equation}
Note that only two entries of $S$ are nonzero, and they are stochastically dominated by $N^{-1/2}$. Then the bound
\begin{equation*}
\sup_{\f x ,\f y \in \bb S}|\hat G_{\f x \f y}| = \norm{\hat G} \le \eta^{-1} \le N^{1-\tau}\,,
\end{equation*}
together with \eqref{eq:bound_on_G}, \eqref{eqn: A.11}, and the assumption $\phi \le N^{\tau/10}$, show that
\begin{equation*}
\max_{\f x ,\f y \in \bb X} |\hat{G}_{\f x \f y}| \prec 1+\phi\,.
\end{equation*}
Combining with \eqref{eqn: A.12}, the trivial bound $\sup_{\f x ,\f y \in \bb S}|G_{\f x \f y}| \leq N^{1-\tau}$, and the fact $\hat{G}$ is independent of $S$, we have
\begin{equation} \label{3.67}
\max_{\f x, \f y \in \bb X} \sup_{|H_{ij}|\leq t} |G_{\f x \f y}|\prec 1+\phi\,.
\end{equation}
Then a simple induction using \eqref{3.15} implies
\begin{equation} \label{3.68}
\max_{\f x, \f y \in \bb X} \sup_{|H_{ij}|\leq t} |\partial_{ij}^mG_{\f x \f y}|\prec (1+\phi)^{m+1}
\end{equation}
for every fixed $m \in \N$.
Also, by Assumption \ref{ass:H} we have a trivial bound
\begin{equation} \label{3.69}
\max_{a, b \in \qq{N}}\sup_{|H_{ij}|\le t}|H_{ab}| \prec 1\,.
\end{equation}

Now let us estimate the second term on the right-hand side of \eqref{tau1}. By our definition of $f$ in \eqref{f}, together with \eqref{3.15}, \eqref{diffD}, \eqref{3.68} and \eqref{3.69}, one easily shows that
\begin{equation*}
\sup\limits_{|x|\le t} \Big| \partial_{ji}^{\ell+1}f\big(H^{ij}+x\Delta^{(ij)}\big)\Big| \prec (1+\phi)^{\ell+1} \cdot N^{100p}
\end{equation*}
for any fixed $\ell \in \N$.
Note that $\E |H_{ij}|^{\ell+2}=O(N^{-(\ell+2)/2})$, and by the bound $\phi \le N^{\tau/10}$ we can find $\ell \equiv \ell(D,p)\ge 1$ such that 
\begin{equation*}
\E |H_{ij}|^{\ell+2} \cdot   \E \qbb{\sup_{|x| \le t}\Big| \partial_{ij}^{\ell + 1}f({H}^{ij}+x\Delta^{ij})\Big|} =O(N^{-D})\,.
\end{equation*}

Finally, we estimate the first term on the right-hand side of \eqref{tau1}. For $a,b\in \qq{N}$, we look at $H_{ab}$ as a function of $H$. Note that we have the bound
\begin{equation*}
\sup_{|x|\leq |H_{ij}|}\absb{H_{ab}(H^{ij}+x\Delta^{ij})} \le |H_{ab}|
\end{equation*}
uniformly for all $a,b \in \qq{N}$. Together with Assumption \ref{ass:H}, \eqref{3.15}, \eqref{diffD}, and the trivial bound $\sup_{\f x , \f y \in \bb S}\abs{G_{\f x \f y}} \le N$, we have
\begin{equation} \label{3.73}
\E\sup_{|x| \le |H_{ij}|}\Big| \partial_{ij}^{\ell + 1}f({H}^{ij}+x\Delta^{ij})\Big|^2 =O\big(N^{O_{p,\ell}(1)}\big)\,.
\end{equation}
From Assumption \ref{ass:H} we find $\max_{i,j}|H_{ij}| \prec N^{-1/2}$, thus by Cauchy-Schwarz inequality we have
\begin{equation} \label{3.74}
\E\big| {H^{2\ell+4}_{ij}}\mathbf{1}_{|H_{ji}|>t}\big|=O_{\prec}(N^{-(l+2)}) \cdot \P (|H_{ij}|>t)=O(N^{-2D-O_{p,\ell}(1)})\,.
\end{equation} 
A combination of \eqref{3.73}, and \eqref{3.74} shows that the first term on the right-hand side of \eqref{tau1} is bounded by $O(N^{-D})$.

This completes the proof of $\cal R^{ij}_{\ell+1}=O(N^{-D})$, and our steps also show that the bound is uniform for all $i, j \in \qq{N}$.
\end{proof}

\subsection{Proof of Proposition \ref{prop:est2}}

For a fixed $p\in\N$ write
\begin{equation}\label{eq:proof_of_averaging_1}
\begin{aligned}
\E |\underline{\cal D B}|^{2p}
&=\E[\underline{(\cal K+HG)B}\cdot \underline{\cal D B}^{p-1}\overline{\underline{\cal D B}}^p]\\
&=\E[\underline{\cal K B}\cdot \underline{\cal D B}^{p-1}\underline{\cal D B}^p]
+\frac{1}{N}\sum_{i,j}\E[H_{ij}(GB)_{ji}\underline{\cal D B}^{p-1}\overline{\underline{\cal D B}}^p]\,.
\end{aligned}
\end{equation}
Using the cumulant expansion (Lemma \ref{lem:cumulant_expansion}) for the second term in \eqref{eq:proof_of_averaging_1} with $h=H_{ij}$ and $f=f_{ij}(H)=(GB)_{ji}\underline{\cal D B}^{p-1}\overline{\underline{\cal D B}}^p$, we obtain
\begin{equation}\label{eq:proof_of_averaging_2}
\E|\underline{\cal D B}|^{2p}
=\E[\underline{\cal K B}\cdot \underline{\cal D B}^{p-1}\overline{\underline{\cal D B}}^p]+\sum_{k=1}^{\ell}Y_k+\sum_{i,j}  \tilde {\cal R}_{\ell+1}^{ij}\,,
\end{equation}
where we used the notation 
\begin{equation}\label{eq:Y_k}
Y_k\deq\frac{1}{N}\sum_{i,j}\frac{1}{k!}\cal C_{k+1}(H_{ij})\E\Big[\partial_{ij}^{k}\big((GB)_{ji}\underline{\cal D B}^{p-1}\overline{\underline{ D B}}^p\big)\Big]\,.
\end{equation}
Note that the sum in \eqref{eq:proof_of_averaging_2} begins from $k = 1$ because $\cal C_1(H_{ij}) = 0$. Here $\ell$ is a fixed positive integer to be chosen later, and $\tilde {\cal R}_{\ell+1}^{ij}$ is a remainder term defined analogously to ${\cal R}^{ij}_{\ell+1}$ in \eqref{tau1}.

As in the proof of Proposition \ref{prop:est1}, the proof can be broken down into the following lemma.

\begin{lemma}\label{prop:main_estimations_averaging}
	Fix $p \ge 2$. Under the assumptions of Theorem \ref{thm:main_general} (ii), we have the following estimates.
	\begin{enumerate}
		\item $\E[\underline{\cal K B}\cdot \underline{\cal D B}^{p-1}\overline{\underline{\cal D B}}^p]+Y_1= O_\prec (\zeta^2) \cdot\E|\underline{\cal D B}|^{2p-1} +O_\prec((1+\phi)^{4} \zeta^4)\cdot\E|\underline{\cal D B}|^{2p-2}$.
		\item For $k \geq 2$, $Y_k=(1+\phi)^{6}\sum_{n=1}^{2p}  O_\prec( \zeta^{2n})\cdot\E|\underline{\cal D B}|^{2p-n}$.
		\item For any $D>0$, there exists $\ell\equiv \ell(D)\geq 1$ such that $\cal {\tilde R}_{\ell+1}^{ij}=O(N^{-D})$ uniformly for all $i,j\in \qq{N}$. 
	\end{enumerate}
\end{lemma}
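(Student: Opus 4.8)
The plan is to prove Lemma \ref{prop:main_estimations_averaging} following the same template as the proof of Lemma \ref{prop:main_estimations}, but now tracking powers of $\zeta$ in place of the mixed $\wt\zeta,\lambda$ bookkeeping, since the self-improving scheme for $\ul{B\cal D}$ produces the final bound $\wt\zeta^2$ directly. For part (i), I would first compute $Y_1$ using $\cal C_2(H_{ij}) = \E[H_{ij}^2]$. Expanding $\partial_{ij}\big((GB)_{ji}\ul{\cal D B}^{p-1}\overline{\ul{\cal D B}}^p\big)$ by the product rule, the term where $\partial_{ij}$ hits $(GB)_{ji}$ reassembles (using \eqref{3.15} and the definition of $s_{ij}$) into $-\ul{(\cal S(G)G)B}\cdot\ul{\cal D B}^{p-1}\overline{\ul{\cal D B}}^p$, which combines with $\E[\ul{\cal K B}\cdot\ul{\cal D B}^{p-1}\overline{\ul{\cal D B}}^p]$ to leave $-\E[\ul{\cal J B}\cdot\ul{\cal D B}^{p-1}\overline{\ul{\cal D B}}^p]$; by \eqref{egWard2} this is $O_\prec(\zeta^2)\cdot\E|\ul{\cal D B}|^{2p-1}$. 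The remaining terms, where $\partial_{ij}$ hits a factor of $\ul{\cal D B}$ or $\overline{\ul{\cal D B}}$, each carry an extra derivative $\partial_{ij}\ul{\cal D B} = \frac1N\sum_a \partial_{ij}(\cal D B)_{aa}$; using \eqref{diffD}, \eqref{3.23} and the Ward-identity estimates \eqref{eq:Ward_example_3}--\eqref{eq:Ward_example_5} (the double sum over $i,j$ after inserting $s_{ij}$ is exactly the shape handled there), one bounds these by $O_\prec((1+\phi)^4\zeta^4)\cdot\E|\ul{\cal D B}|^{2p-2}$. The key new object here, compared to Proposition \ref{prop:est1}, is the matrix $Q$ mentioned in the introduction; I expect the optimal handling of $\partial_{ij}(\cal D B)_{aa}$ to route through $Q$ and a second cumulant expansion, and this is where the real work lies.

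For part (ii), fix $k\ge 2$ and use $\cal C_{k+1}(H_{ij}) = O_k(N^{-(k+1)/2})$ from Lemma \ref{lem:cumulant_factos_estimate} to write
\[
Y_k = O(N^{-(k+1)/2})\cdot\frac1N\sum_{i,j}\E\Big|\partial_{ij}^k\big((GB)_{ji}\ul{\cal D B}^{2p-1}\big)\Big|
\]
(dropping conjugates as before). Distributing $\partial_{ij}^k$ via Leibniz over the factor $(GB)_{ji}$ and the $2p-1$ copies of $\ul{\cal D B}$, each resulting term is a product of some $\partial_{ij}^{l_0}(GB)_{ji}$ with factors $\partial_{ij}^{l_m}\ul{\cal D B}$ and bare factors $\ul{\cal D B}$, with $\sum l_m = k$. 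For the derivatives of $\ul{\cal D B}$ I would prove the analogue of Lemma \ref{3.1}, namely $\partial_{ij}^l\ul{\cal D B} = O_\prec(N^{1/2}(1+\phi)^{l+3}\zeta^2)$ (the extra $\zeta$ coming from the average over the diagonal, as in \eqref{egWard2} versus \eqref{egWard}); this is the averaged counterpart and is obtained by induction using \eqref{diffD}, \eqref{3.23}, \eqref{3.15} and the Ward examples. Then each term gets a factor $N^{q/2}\zeta^{2q}$ from $q$ differentiated $\ul{\cal D B}$-factors, the $\partial_{ij}^{l_0}(GB)_{ji}$ factor after summing over $i,j$ with the cumulant prefactor contributes the remaining powers, and the powers of $N$ organize so that \eqref{sqll}-type terms are bounded by $(1+\phi)^6\zeta^{2q}\cdot\E|\ul{\cal D B}|^{2p-q}$ — giving the claimed $\sum_{n=1}^{2p} O_\prec(\zeta^{2n})$ after relabeling $n=q$. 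As in the proof of Lemma \ref{prop:main_estimations}(ii), the borderline cases $q\ge k-1$ (so $r=l_0\le 1$) must be treated by hand, rewriting one more derivative via \eqref{3.46} and its averaged analogue and applying Cauchy--Schwarz plus the Ward identity term by term; these are the only genuinely delicate estimates and exactly mirror the three cases $(r,q)\in\{(0,k),(1,k-1),(0,k-1)\}$ already handled.

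For part (iii), I would repeat verbatim the argument of the proof of Lemma \ref{prop:main_estimations}(iii): choose $t = N^{\tau/5-1/2}$, set $S = H_{ij}\Delta^{ij}$, $\hat G = (H^{ij}-E-\ii\eta)^{-1}$, and use the resolvent expansions \eqref{eqn: A.11}--\eqref{eqn: A.12} together with $\|\hat G\|\le N^{1-\tau}$ and \eqref{eq:bound_on_G} to get $\max_{\f x,\f y\in\bb X}\sup_{|H_{ij}|\le t}|\partial_{ij}^m G_{\f x\f y}| \prec (1+\phi)^{m+1}$ and $\max_{a,b}\sup_{|H_{ij}|\le t}|H_{ab}|\prec 1$. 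Since $\tilde f_{ij} = (GB)_{ji}\ul{\cal D B}^{p-1}\overline{\ul{\cal D B}}^p$ is a polynomial in bounded-norm matrices and entries of $G$ and $H$, one gets $\sup_{|x|\le t}|\partial_{ij}^{\ell+1}\tilde f_{ij}(H^{ij}+x\Delta^{ij})| \prec (1+\phi)^{\ell+1} N^{C p}$ for some constant $C$, and $\E|H_{ij}|^{\ell+2} = O(N^{-(\ell+2)/2})$; choosing $\ell = \ell(D,p)$ large enough kills the second term of \eqref{tau1}, while the first term is killed by $\P(|H_{ij}|>t) = O(N^{-\text{large}})$ as in \eqref{3.74}. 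The main obstacle overall is part (i): correctly identifying and exploiting the cancellation between $\E[\ul{\cal K B}\cdot\ul{\cal D B}^{p-1}\overline{\ul{\cal D B}}^p]$ and the leading piece of $Y_1$, and then extracting an $O((1+\phi)^4\zeta^4)$ (rather than merely $O(\zeta^2)$) bound on the residual terms — this is precisely where the matrix $Q$ and the second, self-improving cumulant expansion enter, and where the sharpness of the final estimate $\ul{B\cal D}=O_\prec(\wt\zeta^2)$ is won.
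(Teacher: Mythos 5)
Your high-level plan is close to the paper's in outline---cancellation at $k=1$ leaving $-\ul{\cal J B}$, and the remainder bound as in Lemma \ref{prop:main_estimations} (iii)---but there is a genuine gap in how you propose to handle the derivatives of $\ul{\cal D B}$, and it matters quantitatively.

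You propose to prove an ``averaged analogue'' of Lemma \ref{3.1}, namely $\partial_{ij}^l\ul{\cal D B}=O_\prec\pb{N^{1/2}(1+\phi)^{l+3}\zeta^2}$, and then run the same case split as in the proof of Lemma \ref{prop:main_estimations} (ii). This is not what the paper does, and the proposed bound is far too weak. The paper never estimates $\partial_{ij}^l\ul{\cal D B}$ by such an induction; instead it uses the exact identity \eqref{eq:relation_of_der_avg_DB_and_Q},
\begin{equation*}
(1+\delta_{ij})\partial_{ij}\underline{\cal D B} = \tfrac{1}{N}(GB)_{ij}+\tfrac{1}{N}(GB)_{ji}-Q_{ij}-Q_{ji}\,,
\end{equation*}
together with the inductive bound on $Q$ itself from Lemma \ref{lem:estimation_of_Q} (i), $\partial_{ij}^l Q=O_\prec((1+\phi)^{l+1}\zeta^2)$. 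Since $N^{-1}\leq\zeta^2$, this gives $\partial_{ij}^l\ul{\cal D B}=O_\prec((1+\phi)^{l+1}\zeta^2)$ with \emph{no} $N^{1/2}$ factor and fewer powers of $(1+\phi)$. That sharpening is essential: with your bound, each differentiated $\ul{\cal D B}$-factor contributes $N^{1/2}$, so the case $q=k$ produces $N^{-(k-1)/2}\cdot N^{k/2}=N^{1/2}$ in front of the estimate, which cannot be absorbed; and even away from the borderline the accumulated factor $(1+\phi)^{3q}$ outgrows the $(1+\phi)^6$ allowance of the statement as soon as $k$ is moderately large (recall $\ell=\ell(D)$ can be large). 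The paper's choice to prove the induction directly on $Q$ (whose product structure $Q=GFG$ is compatible with the Ward identity, yielding $\zeta^2$ per factor from the $l=0$ case) is precisely what eliminates the $N^{1/2}$; it is not a cosmetic variant of Lemma \ref{3.1}.

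Beyond this, the main technical work---Lemma \ref{lem:estimation_of_Q}, in particular part (ii), $Q=O_\prec((1+\phi)^4\zeta^3)$ via a second, self-improving cumulant expansion---is correctly flagged as ``where the real work lies'' but is not supplied. It is used at $k=1$ (the last two terms of \eqref{k=1_cancel}: without the upgrade from $\zeta^2$ to $\zeta^3$ you only get $(1+\phi)\zeta^3$, which is \emph{not} $O((1+\phi)^4\zeta^4)$) and again at $k=2$ for the first term of $Z_{ij}^2$. Your statement that \eqref{diffD}, \eqref{3.23}, and the Ward examples \eqref{eq:Ward_example_3}--\eqref{eq:Ward_example_5} already give the $O_\prec((1+\phi)^4\zeta^4)$ bound in (i) is therefore misleading, since \eqref{eq:Ward_example_5} presupposes the very $Q$-bound you have not established. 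For $k\geq 3$ the paper avoids a delicate case analysis entirely: with the sharp bound $\partial_{ij}^{l_m-1}(Q_{ij}+Q_{ji})=O_\prec((1+\phi)^{l_m}\zeta^2)$, the prefactor $N^{-(k-1)/2}$ alone (after trading one $N^{-1}$ for $\zeta^2$) absorbs $(1+\phi)^{k-3}$ via $(1+\phi)\leq N^{\tau/10}\leq N^{1/2}$, so no borderline cases remain for $k\geq 3$; only $k=1,2$ need separate treatment. Your proposal for (iii) matches the paper.

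In short: the cancellation at $k=1$ and the use of $Q$ are correctly identified, but replacing the paper's induction on $Q$ by an induction on $\partial_{ij}^l\ul{\cal D B}$ with the bound you state would not close the argument, and the $Q$-estimate itself---the core of the proof---is deferred rather than proved.
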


Indeed, combining Lemma \ref{prop:main_estimations_averaging} for $\ell=\ell(4p+2)$ together with \eqref{eq:proof_of_averaging_2} we obtain 
\begin{equation*}
\E |\underline{\cal D B}|^{2p}= 
(1+\phi)^6\sum_{n=1}^{2p}  O_{\prec}(\zeta^{2n})\cdot\E|\underline{\cal D B}|^{2p-n}
+O(N^{-4p})\,.
\end{equation*}
From H\"older's inequality we find
\begin{equation*}
\E |\underline{\cal D B}|^{2p}\leq 
(1+\phi)^6\sum_{n=1}^{2p}  O_{\prec}(\zeta^{2n})\cdot(\E|\underline{\cal D B}|^{2p})^{(2p-n)/2p}	+O(N^{-4p})\,,
\end{equation*}
which by Young's inequality implies $\E |\underline{\cal D B}|^{2p}=O_{\prec}(((1+\phi)^6\zeta^{2})^{2p})$, because $\zeta\geq N^{-1/2}$. Since $p$ was arbitrary, Proposition \ref{prop:est2} follows by Markov's inequality.

What remains, therefore, is the proof of Lemma \ref{prop:main_estimations_averaging}. A crucial ingredient in the proof of Lemma \ref{prop:main_estimations_averaging} is an upper bound on the absolute value of partial derivatives of $\underline{\cal D B}$. To this end, 
define the matrix
\begin{equation}\label{eq:defn_of_Q}
Q_{ij} \deq \frac{1}{N} (GBHG)_{ij} + \frac{1}{N^2}\sum_{a,b} G_{ia}G_{aj}(GB)_{\f e_b^a b}+\frac{1}{N^2}\sum_{a,b} G_{aa}(GB)_{ib}G_{\f e_b^a  j}
\end{equation}
(recall the notation \eqref{eq:tilted_vectors}) and note that
\begin{equation}	\label{eq:relation_of_der_avg_DB_and_Q}
(1+\delta_{ij})\partial_{ij}\underline{\cal D B} = \frac{1}{N}(GB)_{ij}+\frac{1}{N}(GB)_{ji}-Q_{ij}-Q_{ji}\,.
\end{equation}
The following estimate on $Q$ and its derivatives is the key tool behind the proof of Lemma \ref{prop:main_estimations_averaging}.

\begin{lemma}\label{lem:estimation_of_Q} Suppose that the assumptions of Theorem \ref{thm:main_general} (ii) hold.
	\begin{enumerate}
		\item For every fixed $l\geq 0$ we have $\partial_{ij}^l Q = O_\prec((1+\phi)^{l+1}\zeta^2)$.
		\item We have $Q = O_\prec((1+\phi)^4\zeta^3)$.
	\end{enumerate}
\end{lemma}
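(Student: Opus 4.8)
The plan is to prove part (i) by a self-improving bootstrap on the matrix $Q$, closely mirroring the scheme used for $\cal D$ in Proposition \ref{prop:est1}, and then to deduce part (ii) from part (i) by one more application of the cumulant expansion (the ``second cumulant expansion'' advertised in the introduction). For part (i), I would first record a rough a priori bound: directly bounding the three terms in the definition \eqref{eq:defn_of_Q} using $\norm{H} \prec 1$ (Lemma \ref{lem:H_bound}), the entrywise bound \eqref{eq:bound_on_G}, and the Ward identity \eqref{ward_estimate} gives something like $Q = O_\prec(N^{1/2}(1+\phi)^3 \zeta^2)$ (the $N^{1/2}$ coming from the $\norm{H}\abs{G\f w}$-type estimate, exactly as in Lemma \ref{3.1}). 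The derivative bounds $\partial_{ij}^l Q = O_\prec(N^{1/2}(1+\phi)^{l+c}\zeta^2)$ follow from the same computation combined with \eqref{3.15} and \eqref{eq:bounds_on_derivatives_of_G}, by an induction identical in structure to Lemma \ref{3.1}; I would state this as an auxiliary lemma first.

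Next, to upgrade the a priori bound to the optimal one, I would test $\abs{Q_{\f v \f w}}^{2p}$ against itself: writing $Q_{\f v\f w}$ via \eqref{eq:defn_of_Q}, the only term containing a bare $H$ is $\frac{1}{N}(GBHG)_{\f v \f w} = \frac1N \sum_{i,j} (GB)_{\f v i} H_{ij} G_{j \f w}$, so $\E \abs{Q_{\f v\f w}}^{2p}$ splits into an $H$-free piece plus $\frac1N\sum_{i,j}\E[(GB)_{\f v i} H_{ij} G_{j\f w} Q_{\f v\f w}^{p-1}\overline{Q}_{\f v\f w}^{p}]$, to which the cumulant expansion (Lemma \ref{lem:cumulant_expansion}) applies with $f = (GB)_{\f v i} G_{j\f w} Q_{\f v\f w}^{p-1}\overline Q_{\f v\f w}^p$. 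The $k=1$ term should, after using $\partial_{ij}$ and the identity $\cal S(G)G = \cal J + \cal K$, combine with the $H$-free terms so that the genuinely dangerous contributions cancel, leaving an error of the form $O_\prec((1+\phi)^4\zeta^3)\cdot \E\abs{Q_{\f v\f w}}^{2p-1}$ plus lower-order terms — this is the analogue of Lemma \ref{prop:main_estimations}(i). The $k\ge 2$ terms and the remainder term $\cal R_{\ell+1}^{ij}$ are handled exactly as in Lemma \ref{prop:main_estimations}(ii)--(iii), using the derivative bounds and the Ward-identity examples \eqref{eq:Ward_example_1}--\eqref{eq:Ward_example_5}. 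Feeding this back through Hölder and Markov yields an implication $Q = O_\prec(\lambda) \Rightarrow Q = O_\prec((1+\phi)^{c}\zeta^2 \lambda^{1/2})$ or similar, which by Lemma \ref{lem:self-improv} (iterated from the a priori bound) collapses to $Q = O_\prec((1+\phi)^{l+1}\zeta^2)$; applying the same argument to each $\partial_{ij}^l Q$ gives part (i). Part (ii) is then obtained by the same high-moment/cumulant-expansion machinery applied once more, but now testing $\abs{Q_{\f v\f w}}^{2p}$ with part (i) already in hand as the input a priori bound, which produces the sharper exponent $(1+\phi)^4\zeta^3$ in one step.

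The main obstacle will be the careful tracking of the cancellation at $k=1$: just as in \eqref{eq:estimating_X_1_1}, the leading piece of the cumulant-$2$ term has to be matched against the explicit $\cal J$/$\cal K$-type terms hidden in the definition of $Q$, and verifying that what survives is genuinely of order $(1+\phi)^4\zeta^3$ (and not merely $\zeta^2$) requires bookkeeping of several Ward-identity applications of the type in \eqref{eq:Ward_example_3}--\eqref{eq:Ward_example_4}. A secondary technical point is keeping the powers of $(1+\phi)$ under control through the induction on $l$ and through the two nested bootstraps, so that the final exponents match those claimed; this is routine but error-prone, and I would organize it by always reducing, via \eqref{eq:bounds_on_derivatives_of_G} and Lemma \ref{3.1} (together with its $Q$-analogue), to the single ``base'' sum $\frac1N\sum_{i,j}\abs{v_i (\partial_{ij}^r G_{j\f w})(\cdots)}$ treated in the proof of Lemma \ref{prop:main_estimations}(ii).
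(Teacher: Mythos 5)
There is a genuine gap, and it concerns the structure of the argument for part (i). The paper does \emph{not} use a cumulant expansion or any self-improving bootstrap to prove part (i): for $l=0$ the bound $Q = O_\prec((1+\phi)\zeta^2)$ is obtained \emph{directly} from the definition \eqref{eq:defn_of_Q}, and the crucial point you appear to have missed is the prefactor $\tfrac{1}{N}$ in front of $(GBHG)_{\f v\f w}$. Splitting $\tfrac{1}{N} = \tfrac{1}{\sqrt N}\cdot\tfrac{1}{\sqrt N}$ and applying Cauchy-Schwarz plus the Ward identity to each factor, one gets
\[
\tfrac{1}{N}\absb{(GBHG)_{\f v\f w}} \le \sqrt{\tfrac{1}{N}(GBB^*G^*)_{\f v\f v}}\cdot\sqrt{\tfrac{1}{N}(G^*H^*HG)_{\f w\f w}} \prec \zeta\cdot\zeta = \zeta^2,
\]
with no $N^{1/2}$. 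Your claimed a priori bound $Q = O_\prec(N^{1/2}(1+\phi)^3\zeta^2)$ incorrectly imports the $N^{1/2}$ from the $(HG)_{\f v\f w}$ estimate in Lemma~\ref{3.1}, but $\cal D$ has no such $\tfrac{1}{N}$ prefactor while $Q$ does; this is precisely why $\cal D$ needs the full bootstrap and $Q$ does not. The derivative bounds for $l\ge 1$ then follow by a straightforward induction using \eqref{eq:der_Q}, the Leibniz rule, the induction hypothesis, and the Ward identity (the $\tfrac{1}{N^2}$ prefactor in $E^{ij}$ again being absorbed); again no cumulant expansion is involved.

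A secondary issue: your proposed self-improving implication for part (i), $Q = O_\prec(\lambda) \Rightarrow Q = O_\prec((1+\phi)^c\zeta^2\lambda^{1/2})$, has fixed point $(1+\phi)^{2c}\zeta^4$, not the claimed $\zeta^2$ (and not even the paper's $\zeta^3$ for part (ii)). This is internally inconsistent, and a fixed point of order $\zeta^4$ is not attainable; the error term from the cancellation at $k=1$ is genuinely of order $\zeta^3$, not smaller. You have in fact conflated the two parts: the $O_\prec((1+\phi)^4\zeta^3)\E|Q_{\f v\f w}|^{2p-1}$ estimate you quote is what the paper's $k=1$ cancellation in Lemma~\ref{lem:cumulant_expansion_for_Q_estimates}(i) produces for part (ii), not part (i). For part (ii) your outline is close to the paper's: one more cumulant expansion of $\E|Q_{\f v\f w}|^{2p}$, with the $\cal W_{\f v\f w}$ term precisely cancelling the leading contribution of $Z_1$, yielding the self-improving implication $Q = O_\prec(\lambda) \Rightarrow Q = O_\prec(((1+\phi)^4\zeta^3\lambda)^{1/2})$ — though this is then iterated from the part (i) bound $(1+\phi)\zeta^2$, not obtained ``in one step'' as you state. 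The net effect of the gap is that your proposal spends a full cumulant-expansion bootstrap on part (i) where a one-paragraph deterministic estimate suffices, and would not reach the stated exponent by the route described.
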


Note that the bound from (ii) is stronger than that from (i) for $l = 0$. Indeed, it relies on a special cancellation that is false for $l > 0$. This cancellation is an essential ingredient of our proof, and our ability to exploit it hinges on the identification of $Q$ as a central object of our analysis. As it turns out, we are able to derive bounds on $Q$ that are closed in the sense that they are self-improving; this allows us to iterate these bounds and hence obtain the optimal bounds on $Q$ stated in Lemma \ref{lem:estimation_of_Q}.

We postpone the proof of Lemma \ref{lem:estimation_of_Q} and turn to complete the proof of Lemma \ref{prop:main_estimations_averaging}.

\begin{proof}[Proof of Lemma \ref{prop:main_estimations_averaging}]
	Using \eqref{eq:Y_k} and Lemma \ref{lem:cumulant_factos_estimate}, for every fixed $k$ we have 
	\begin{equation*}
	|Y_k| = O(N^{-\frac{k-1}{2}})\cdot \frac{1}{N^2}\sum_{i,j} \E\big|\partial_{ij}^k ((GB)_{ji}\underline{\cal D B}^{p-1} \overline{\underline{\cal D B}}^p)\big| \,.
	\end{equation*}
	The last expression can be estimated by a sum, over $r,s,t\geq 0$ such that $r+s+t=k$, of terms of the form 
	\begin{equation}\label{eq:r_s_t}
	O(N^{-\frac{k-1}{2}})\cdot \frac{1}{N^2}\sum_{i,j} \E\absb{(\partial_{ij}^r (GB)_{ji})(\partial_{ij}^s\underline{\cal D B}^{p-1})(\partial_{ij}^t \overline{\underline{\cal D B}}^p)}\,.
	\end{equation}
	To simplify notation, here we ignore the complex conjugate on $\ul{\cal D B}$, which plays no role in the following analysis and represents a trivial notational complication, and estimate
	\begin{equation}\label{eq:r_s_t_simple}
	O(N^{-\frac{k-1}{2}})\cdot \frac{1}{N^2}\sum_{i,j} \E\absb{(\partial_{ij}^r (GB)_{ji})(\partial_{ij}^{k-r}\underline{\cal D B}^{2p-1})}
	\end{equation}
	for $r = 0, \dots, k$. 
	
	Computing the derivative $\partial_{ij}^{k-r}$, we find that \eqref{eq:r_s_t_simple} is bounded by a sum of terms of the form
	\begin{equation} \label{eq:r_s_t_2}
	O(N^{-\frac{k-1}{2}})\cdot \frac{1}{N^2}\sum_{i,j}\E\,\bigg|(\partial^r_{ij}(GB)_{ji}) \underline{\cal D B}^{2p-1-q} \prod_{m=1}^q (\partial_{ij}^{l_m}\underline{\cal D B}) \bigg|\,,
	\end{equation}
	where the sum ranges of integers $q = 0, \dots, (k-r) \wedge (2p - 1)$ and $l_1, \dots, l_q \geq 1$ satisfying $l_1 + \dots + l_q = k-r$.
	Using \eqref{eq:relation_of_der_avg_DB_and_Q} to rewrite the derivative of $\underline{\cal D B}$ and noting that $\partial_{ij}^l((GB)_{ji}) \prec (1+\phi)^{l+1}$ for $l\geq 0$ by \eqref{eq:bound_on_G}, we conclude that \eqref{eq:r_s_t_2} is stochastically dominated by 
	\begin{equation*}
	O(N^{-\frac{k-1}{2}})(1+\phi)^{r+1}\cdot \frac{1}{N^2}\sum_{i,j} \E \qBB{\bigg|\prod_{m=1}^q\big[\abs{\partial_{ij}^{l_m-1}(Q_{ij}+Q_{ji})}+N^{-1}(1+\phi)^{l_m}\big]\bigg|\cdot|\underline{\cal D B}|^{2p-1-q}}\,.
	\end{equation*}
	Using Lemma \ref{lem:estimation_of_Q} (i) to estimate the derivatives of $Q_{ij}$ and $Q_{ji}$, we conclude
	\begin{align*}
	|Y_k| &= \sum_{q=0}^{k\wedge (2p-1)} O(N^{-\frac{k-1}{2}})\cdot O_\prec((1+\phi)^{k+1}\zeta^{2q})\cdot \E|\underline{\cal D B}|^{2p-1-q}\\
	&\leq \sum_{q=0}^{2p-1}  O_\prec(N^{-\frac{k-3}{2}}(1+\phi)^{k+1}\zeta^{2q+2})\cdot \E|\underline{\cal D B}|^{2p-1-q}\,.
	\end{align*}
	Noting that $N^{-\frac{k-3}{2}}(1+\phi)^{k+1}\leq (1+\phi)^4$ whenever $k\geq 3$, we conclude the proof for the case $k\geq 3$.
	
	In the remaining cases for Lemma \ref{prop:main_estimations_averaging} (i) and (ii), that is $k\in\{1,2\}$, this rough argument is not precise enough, and one needs to obtain an additional factor of $\zeta$. As it turns out, those terms are the ones in which $r=1$. This can be done by a more careful analysis, which we now perform.
	
	\subsubsection*{(i) The case $k=1$} This case makes use of an algebraic cancellation, and we therefore track the complex conjugates carefully. Using that $\cal C_2(H_{ij}) = \E[H_{ij}^2]$, the formula for the derivative of $G$ (see \eqref{3.15}), and the one for the derivative of $\underline{\cal D B}$ (see \eqref{eq:relation_of_der_avg_DB_and_Q}), one can verify that 
	\begin{equation}\label{k=1_cancel}
	\begin{aligned}
	&(1+\delta_{ij})(\E[\underline{\cal K B}\cdot \underline{\cal D B}^{p-1}\overline{\underline{\cal D B}}^p]+Y_1) \\
	&\qquad = -(1+\delta_{ij})\E[\underline{\cal J B}\cdot  \underline{\cal D B}^{p-1} \overline{\underline{\cal D B}}^p] \\
	&\qquad +\frac{(p-1)}{N^3}\sum_{i,j}s_{ij}\E\big[((GB)_{ji}(GB)_{ij}+(GB)_{ji}(GB)_{ji})\underline{\cal D B}^{p-2}\overline{\underline{\cal D B}}^p\big]\\
	&\qquad +\frac{p}{N^3}\sum_{i,j}s_{ij}\E\big[((GB)_{ji}(\overline{GB})_{ij}+(GB)_{ji}(\overline{GB})_{ji})\underline{\cal D B}^{p-1}\overline{\underline{\cal D B}}^{p-1}\big]\\
	&\qquad - \frac{p-1}{N^2} \sum_{i,j} s_{ij} \E\big[(GB)_{ji}(Q_{ij}+Q_{ji})\underline{\cal D B}^{p-2}\overline{\underline{\cal D B}}^p\big]\\
	&\qquad - \frac{p}{N^2} \sum_{i,j} s_{ij} \E\big[(GB)_{ji}(\overline{Q}_{ij}
	+\overline{Q}_{ji})\underline{\cal D B}^{p-1}\overline{\underline{\cal D B}}^{p-1}\big]\,.
	\end{aligned}
	\end{equation}
	Note that on the right-hand side of this identity a crucial cancellation took place: the first line comes from the sum of the term $\underline{\cal K B}$ and the term $r=1$ in $Y_1$, which almost cancel, yielding the small term $\underline{\cal J B}$.
	
	Taking an absolute value, using \eqref{egWard2} for the first term, the Ward identity and the fact that $s_{ij} = O(1)$ for the second and third terms and Lemma \ref{lem:estimation_of_Q} (ii) together with the Ward identity for the last two terms, the claim follows.
	
	\subsubsection*{(ii) The case $k=2$} From Lemma \ref{lem:cumulant_factos_estimate} we get $|\cal C_3(H_{ij})|=O(N^{-3/2})$, and therefore
	\begin{equation} \label{Y2}
	|Y_2|=O(N^{-5/2})\cdot \sum_{i,j}\,\E \big|\partial_{ij}^2\big((GB)_{ji} \underline{\cal D B}^{p-1}\overline{\underline{\cal D B}}^p\big)\big|\,.
	\end{equation}
	As before, we ignore the complex conjugates to simplify notation, and estimate
	\begin{equation} \label{Y21}
	|Y_2|=O(N^{-5/2})\cdot \sum_{i,j}\,\E \big|\partial_{ij}^2\big((GB)_{ji} \underline{\cal D B}^{2p-1}\big)\big|\,.
	\end{equation}
	instead of \eqref{Y2}. The proof is performed using an explicit computation of the second derivative
	\begin{multline*}
	\partial_{ij}^2\big((GB)_{ji} \underline{\cal D B}^{2p-1}\big) = 
	(\partial_{ij}^2 (GB)_{ji}) \underline{\cal D B}^{2p-1}
	+ 2(2p-1)(\partial_{ij}(GB)_{ji})(\partial_{ij}\underline{\cal D B})\underline{\cal D B}^{2p-2} \\
	+(2p-1)(GB)_{ji}(\partial_{ij}^2 \underline{\cal D B}) \underline{\cal D B}^{2p-2}
	+(2p-1)(2p-2)(GB)_{ji}(\partial_{ij}\underline{\cal D B})^2 \underline{\cal D B}^{2p-3}\\
	\eqd  (1+\delta_{ij})^{-2}(Z^1_{ij} \, \underline{\cal D B}^{2p-1} +Z^2_{ij} \, \underline{\cal D B}^{2p-2}+Z^3_{ij} \, \underline{\cal D B}^{2p-3})\,,
	\end{multline*}
	in self-explanatory notation.
	
	Starting with $Z^1_{ij}$, we have 
	\begin{equation*}
	(1+\delta_{ij})^2 Z^1_{ij} = 2G_{ji}G_{ji}(GB)_{ji} + 2G_{jj}G_{ii}(GB)_{ji}+2G_{ji}G_{jj}(GB)_{ii}+2G_{jj}G_{ij}(GB)_{ii}\,.
	\end{equation*}
	Using the bound $N^{-1/2} \leq \zeta$, the Ward identity, and \eqref{eq:bound_on_G} on each of the four terms, we obtain
	\begin{equation*}
	O(N^{-5/2})\sum_{i,j}\E\big|Z^1_{ij}\underline{\cal D B}^{2p-1}\big| \prec O_\prec((1+\phi)^2\zeta^2)\E|\underline{\cal D B}|^{2p-1}\,,
	\end{equation*}
	as desired.
	
	The first term of $Z^2_{ij}$ yields the contribution
	\begin{multline*}
	O(N^{-5/2})\sum_{i,j}\E\big|(\partial_{ij}(GB)_{ji})(\partial_{ij}\underline{\cal D B})\underline{\cal D B}^{2p-2}\big|\\
	=O(N^{-5/2}) \sum_{i,j}\E\big|(\partial_{ij}(GB)_{ji})\big(N^{-1}(GB)_{ij}+N^{-1}(GB)_{ji}-Q_{ij}-Q_{ji}\big)\underline{\cal D B}^{2p-2}\big|\,,
	\end{multline*}
	where we used \eqref{eq:relation_of_der_avg_DB_and_Q}. Using \eqref{eq:bound_on_G}, \eqref{3.15}, the bound $N^{-1/2} \leq \zeta$, and the Ward identity, one can estimate the contribution of the first two terms by $O_\prec((1+\phi)^2\zeta^4)\E|\underline{\cal D B}|^{2p-2}$. Using \eqref{eq:bound_on_G}, Lemma \ref{lem:estimation_of_Q} (ii), and the Ward identity, one can bound the contribution of the last two terms by $O_\prec((1+\phi)^6 \zeta^4)\E|\underline{\cal D B}|^{2p-2}$, as desired.
	
	In the same spirit, using Lemma \ref{lem:estimation_of_Q} (i), the second term of $Z_{ij}^2$ is of the form 
	\begin{align*}
	&O(N^{-5/2})\sum_{i,j}\E\big|(GB)_{ji}(\partial_{ij}^2 \underline{\cal D B}) \underline{\cal D B}^{2p-2}\big|\\
	&\quad =O(N^{-5/2}) \sum_{i,j}\E\big|(GB)_{ji}\big(\partial_{ij}\big(N^{-1}((GB)_{ij}+(GB)_{ji})-(Q_{ij}+Q_{ji})\big)\big)\underline{\cal D B}^{2p-2}\big|\\
	&\quad =  O(N^{-1/2})\cdot \frac{1}{N^2}\sum_{i,j}\E\big|(GB)_{ji} O_\prec((1+\phi)^2\zeta^2) \underline{\cal D B}^{2p-2}\big|=O((1+\phi)^2\zeta^4)\E|\underline{\cal D B}|^{2p-2}\,.
	\end{align*}
	
	Finally, the contribution of $Z_{ij}^3$ is of the form
	\begin{equation*}
	O(N^{-5/2})\sum_{i,j}\E\big|(GB)_{ji}(\partial_{ij}\underline{\cal D B})^2 \underline{\cal D B}^{2p-3}\big|\,,
	\end{equation*}  
	using Lemma \ref{lem:estimation_of_Q} (i) to bound $(\partial_{ij}\underline{\cal D B})^2 = O_\prec((1+\phi)^4\zeta^4)$ and using the Ward identity for $(GB)_{ji}$, we conclude that the last term equals $O_\prec((1+\phi)^4\zeta^6)$, as desired. This concludes the proof of Lemma \ref{prop:main_estimations_averaging} (i) and (ii).
	
	\subsubsection*{(iii) The remainder term}  The proof is similar to that of Lemma \ref{prop:main_estimations} (iii), and we omit the details.
\end{proof}

\subsection{Proof of Lemma \ref{lem:estimation_of_Q}}

We start with several observations regarding the matrix $Q$. Define the random matrix
\begin{equation*}
F_{ab}\equiv F(H)_{ab}\deq \frac{1}{N}(BH)_{ab} + \frac{\delta_{ab}}{N^2}\sum_{c} (GB)_{\f e_c^b c} + \frac{1}{N^2}B_{ab}\sum_c G_{\f e_c^b c}\, .
\end{equation*}
Thus we have
\begin{equation*}
Q = GFG\, ,	
\end{equation*}
and therefore by \eqref{3.15} we have for all $\f v,\f w\in\mathbb{S}$
\begin{equation}\label{eq:der_Q}
(1+\delta_{ij})\partial_{ij} Q_{\f v\f w} = -(G_{\f v i}Q_{j\f w} + G_{\f v j}Q_{i\f w} + Q_{\f v i}G_{j\f w} + Q_{\f v j}G_{i\f w}) + E^{ij}_{\f v\f w}\,,
\end{equation}
where $E^{ij} \deq  (1+\delta_{ij})G(\partial_{ij}F)G$.

\begin{proof}[Proof of Lemma \ref{lem:estimation_of_Q} (i)]
	We prove the claim by induction on $l$. We start with the case $l = 0$. Fix $\f v,\f w\in\mathbb{S}$. Using  \eqref{eq:defn_of_Q}, \eqref{eq:bound_on_G}, and the Ward identity, we obtain 
	\begin{equation*}
	\begin{aligned}
	|Q_{\f v\f w}| &\leq  \frac{1}{N} |(GBHG)_{\f v\f w}| + \frac{1}{N^2}\sum_{a,b} \big|G_{\f va}G_{a\f w}(GB)_{\f e_b^a b}\big|+\frac{1}{N^2}\sum_{a,b} \big|G_{aa}(GB)_{\f v b}G_{\f e_b^a  \f w}\big|\\
	& \prec \frac{1}{N}\sqrt{|(GBB^*G^*)_{\f v\f v}|}\sqrt{|(G^*H^*HG)_{\f w\f w}|} + (1+\phi)\zeta^2 \,.
	\end{aligned}
	\end{equation*}
Using the bound \eqref{eq:bound_on_G} the Ward identity, and Lemma \ref{lem:H_bound}, we therefore get
	\begin{equation*}
	\frac{1}{N}|(G^*H^*HG)_{\f w\f w}| \leq \frac{1}{N}\|H\|^2(G^*G)_{\f w\f w} = \|H\|^2 \frac{\im G_{\f w\f w}}{N\eta} \leq \norm{H}^2\zeta^2 \prec \zeta^2\,.
	\end{equation*}
	Similarly, using the assumption $\norm{B}=O(1)$ we find $\frac{1}{N} |(GBB^*G^*)_{\f v\f v}| \leq O(\zeta^2)$.
	We conclude that $|Q_{\f v\f w}| \prec (1+\phi)\zeta^2$, which completes the proof of the case $l = 0$.
	
	We now perform the induction step.
	Suppose that $l \geq 1$ and $\partial_{ij}^{m}Q = O_\prec((1+\phi)^{m+1}\zeta^2)$ for $0\leq m\leq l-1$. By \eqref{eq:der_Q} we have
	\begin{equation} \label{der_Q}
	(1+\delta_{ij})\partial_{ij}^l Q_{\f v\f w} = -\partial_{ij}^{l-1}(G_{\f v i}Q_{j\f w} + G_{\f v j}Q_{i\f w} + Q_{\f v i}G_{j\f w} + Q_{\f v j}G_{i\f w}) + \partial_{ij}^{l-1}E_{\f v\f w}^{ij}\,.
	\end{equation}
	We deal with each of the terms on the right-hand side separately. The term $\partial_{ij}^{l-1}(G_{\f vi}Q_{j\f w})$ is estimated using \eqref{eq:bounds_on_derivatives_of_G} as
	\begin{multline*}
	\absb{\partial_{ij}^{l-1}(G_{\f vi}Q_{j\f w})} \leq \sum_{m=0}^{l-1} \binom{l-1}{m} \absb{(\partial_{ij}^m G_{\f v i}) (\partial^{l-1-m}_{ij}Q_{j\f w})} \prec \sum_{m=0}^{l-1}(1+\phi)^{m+1} \absb{\partial^{l-1-m}_{ij}Q_{j\f w}}\\
	\prec \sum_{m=0}^{l-1}(1+\phi)^{m+1} (1+\phi)^{l-m}\zeta^2 = O((1+\phi)^{l+1}\zeta^2)\,,
	\end{multline*}
	where in the third step we used the induction assumption. The three subsequent terms of \eqref{der_Q} are estimated analogously.
	
	In order to estimate the last term of \eqref{der_Q}, we write
	\begin{equation}\label{eq:formula_for_E}
	\begin{aligned}
	E^{ij}_{\f v\f w} &= (1+\delta_{ij})\sum_{a,b}G_{\f v a}(\partial_{ij}F_{ab})G_{b\f w}\\
	& = \frac{1}{N} (GB)_{\f v i}G_{j\f w} +\frac{1}{N} (GB)_{\f v j}G_{i\f w} \\
	& \quad - \frac{1}{N^2} \sum_{a,b}G_{\f v a}G_{a\f w} G_{\f e_b^ai}(GB)_{jb} -
	\frac{1}{N^2} \sum_{a,b}G_{\f v a}G_{a\f w} G_{\f e_b^a j}(GB)_{ib}\\
	& \quad -\frac{1}{N^2} \sum_{a,b} (GB)_{\f v a}G_{a\f w} G_{\f e_b^a i}G_{jb}
	-\frac{1}{N^2} \sum_{a,b} (GB)_{\f v a} G_{a\f w} G_{\f e_b^a j}G_{i b}\,.
	\end{aligned}
	\end{equation}
	Using \eqref{3.15}, the assumption $\norm{B}=O(1)$ and the bound \eqref{eq:bound_on_G} it follows that
	\begin{equation*}
	\partial_{ij}^{l-1}\bigg(\frac{1}{N} (GB)_{\f v i}G_{j\f w} +\frac{1}{N} (GB)_{\f v j}G_{i\f w}\bigg) \prec \frac{1}{N}(1+\phi)^{l+1}\leq (1+\phi)^{l+1}\zeta^2. 
	\end{equation*}
	What remains, therefore, is to estimate $\partial_{ij}^{l-1}$ applied to the four last terms of \eqref{eq:formula_for_E}. They are all similar, and we consider $\partial_{ij}^{l-1}\Big(\frac{1}{N^2} \sum_{a,b}G_{\f v a}G_{a\f w} G_{\f e_b^ai}(GB)_{jb}\Big)$ for definiteness. By computing the derivatives and using \eqref{3.15}, we obtain a sum of terms, each of which is a product of $l+3$ factors of the form $G_{\f x\f y}$ for some $\f x,\f y\in \{\f v,\f w,\f e_a,\f e_b,\f e_b^a,\f e_i,\f e_j\}$ and one factor of the form $(GB)_{\f x b}$ for some $\f x\in \{i,j\}$. Furthermore, exactly two of those factors are of the form $G_{\f x a}$ or $G_{a\f x}$ with $\f x\in \{\f v,\f w,\f e_i,\f e_j\}$. Applying the Ward identity with the sum over $a$ to the two factors containing an index $a$, and estimating the remaining $l + 1$ factors using \eqref{eq:bound_on_G}, we conclude that 
	\begin{equation*}
	\partial_{ij}^{l-1}\bigg(\frac{1}{N^2} \sum_{a,b}G_{\f v a}G_{a\f w} G_{\f e_b^ai}(GB)_{jb}\bigg) \prec (1+\phi)^{l+1}\zeta^2\,,
	\end{equation*}
	as desired. This concludes the proof.
\end{proof}

\begin{proof} [Proof of Lemma \ref{lem:estimation_of_Q} (ii)]
	We have to improve the naive bound $Q = O_\prec((1 + \phi) \zeta^2)$ from part (i) by an order $\zeta$. We do this by deriving a stochastic self-improving estimate on $Q$, which makes use of a crucial cancellation analogous to, but more subtle than, the one in \eqref{k=1_cancel}. We derive this self-improving bound using another high-moment estimate for the entries of $Q$, which is derived using the cumulant expansion from Lemma \ref{lem:cumulant_expansion}.

	To this end, fix $\f v,\f w\in \mathbb{S}$ and $p\in\N$. From \eqref{eq:defn_of_Q} we immediately get
	\begin{equation*}
	Q_{\f v\f w} = \frac{1}{N} (GBHG)_{\f v\f w} + \frac{1}{N^2}\sum_{i,j} G_{\f v j}G_{j\f w}(GB)_{\f e_i^j i}+\frac{1}{N^2}\sum_{i,j} G_{jj}(GB)_{\f v i}G_{\f e_i^j \f w}\,.
	\end{equation*}
	Thus,
	\begin{equation*}
	\E|Q_{\f v\f w}|^{2p} = \E\Big[\cal W_{\f v\f w}Q_{\f v \f w}^{p-1}\overline{Q}_{\f v \f w}^{p}\Big]+ \frac{1}{N}\sum_{i,j}\E\big[(GB)_{\f v i}H_{ij}G_{j\f w}Q_{\f v \f w}^{p-1}\overline{Q}_{\f v \f w}^{p}\big]\,,
	\end{equation*}
	where we defined
	\begin{equation*}
	\cal W_{\f v\f w} \deq\frac{1}{N^2}\sum_{i,j} G_{\f v j}G_{j\f w}(GB)_{\f e_i^j i}+\frac{1}{N^2}\sum_{i,j} G_{jj}(GB)_{\f v i}G_{\f e_i^j \f w}\,.
	\end{equation*}
	Using the cumulant expansion (Lemma \ref{lem:cumulant_expansion}) for the last term on the right-hand side with $h=H_{ij}$ and $f(H_{ij}) = (GB)_{\f v i}G_{j\f w}Q_{\f v \f w}^{p-1}\overline{Q}_{\f v \f w}^{p}$, we obtain 
	\begin{equation}\label{eq:power_of_Q}
	\E|Q_{\f v\f w}|^{2p} = \E\Big[\cal W_{\f v\f w}Q_{\f v \f w}^{p-1}\overline{Q}_{\f v \f w}^{p}\Big] + \sum_{k=1}^{\ell} Z_k + \sum_{i,j} \hat{\cal R}_{\ell+1}^{ij}\, ,
	\end{equation}
	where we used the notation 
	\begin{equation*}
	Z_k = \frac{1}{N}\sum_{i,j}\frac{1}{k!}\cal C_{k+1}(H_{ij})\E\big[\partial_{ij}^k\big((GB)_{\f v i}G_{j\f w}Q_{\f v \f w}^{p-1}\overline{Q}_{\f v \f w}^{p}\big)\big] 
	\end{equation*}
	Here $\ell$ is a fixed positive integer to be chosen later, and $\hat {\cal R}_{\ell+1}^{ij}$ is a remainder term defined analogously to $\cal R^{ij}_{\ell+1}$ in (\ref{tau1}).

	The proof of Lemma \ref{lem:estimation_of_Q} can be broken into the following estimates.
	
	\begin{lemma}\label{lem:cumulant_expansion_for_Q_estimates}
		Suppose that $Q = O_\prec(\lambda)$ for some $\lambda\geq (1+\phi)^4\zeta^3$.
		\begin{enumerate}
			\item We have
			\begin{equation*}
			\E\big[\cal W_{\f v\f w}Q_{\f v \f w}^{p-1}\overline{Q}_{\f v \f w}^{p}\big]+Z_1 = O_\prec(\zeta^3 )\E|Q_{\f v\f w}|^{2p-1}+O_\prec(\zeta^3 \lambda)\E|Q_{\f v\f w}|^{2p-2}
			\end{equation*}
			\item For $k\geq 2$ we have
\begin{equation} \label{Z_k_est_2}
Z_k = \sum_{n=1}^{2p}(1+\phi)^{2n} O_\prec\pb{(\zeta^3 \lambda)^{n/2}} \E|Q_{\f v\f w}|^{2p-n}\,.
\end{equation}
			\item For any $D>0$, there exists $\ell\equiv\ell(D)\geq 1$  such that $\hat {\cal R}_{\ell+1}^{ij}=O(N^{-D})$ uniformly for all $i,j\in \qq{N}$.
		\end{enumerate}
	\end{lemma}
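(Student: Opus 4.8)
The plan is to follow the same template as the proof of Lemma~\ref{prop:main_estimations}: start from the identity \eqref{eq:power_of_Q} and estimate the three groups of terms on its right-hand side separately, the heart of the matter being an algebraic cancellation in the $k=1$ term. Concretely, for part~(i) I would expand $Z_1$ using $\cal C_2(H_{ij}) = \E H_{ij}^2 = (1+\delta_{ij})s_{ij}/N$ together with the Leibniz rule, splitting according to whether $\partial_{ij}$ falls on $(GB)_{\f v i}$, on $G_{j\f w}$, or on one of the factors $Q_{\f v\f w}$ (resp. $\overline{Q}_{\f v\f w}$). By \eqref{3.15} applied also to $GB$, the part where $\partial_{ij}$ hits $(GB)_{\f v i}$ or $G_{j\f w}$ produces exactly four ``diagonal'' terms of the form $\tfrac1{N^2}\sum_{i,j}s_{ij}(\,\cdot\,)_{ii}(\,\cdot\,)Q_{\f v\f w}^{p-1}\overline{Q}_{\f v\f w}^p$; recalling $\f e_i^j = s_{ij}\f e_i$, two of these cancel $\E[\cal W_{\f v\f w}Q_{\f v\f w}^{p-1}\overline{Q}_{\f v\f w}^p]$ identically. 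This is the analogue, at the level of $Q$, of the cancellation of $\cal K$ and $X_1$ down to $\cal J$ in \eqref{eq:estimating_X_1_1}.

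The two surviving diagonal terms equal $-\tfrac1{N^2}\sum_{i,j}s_{ij}\E\big[(G_{\f v i}(GB)_{ji}+(GB)_{\f v i}G_{ji})G_{j\f w}Q_{\f v\f w}^{p-1}\overline{Q}_{\f v\f w}^p\big]$, which I would bound by $O_\prec(\zeta^3)\E|Q_{\f v\f w}|^{2p-1}$ via a single Cauchy-Schwarz over the joint index $(i,j)$ and the Ward-type estimates $\tfrac1N\sum_i|G_{\f v i}|^2\prec\zeta^2$, $\tfrac1N\sum_j|G_{j\f w}|^2\prec\zeta^2$, and $\tfrac1{N^2}\sum_{i,j}|(GB)_{ji}|^2 = \tfrac1{N^2}\tr(G^*GBB^*)\prec\zeta^2$; the last identity, and the fact that one must sum over $i$ and $j$ simultaneously rather than term by term as in \eqref{eq:Ward_example_1}, are exactly what produce the sharp power $\zeta^3$ with no factor of $1+\phi$. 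The remaining part of $Z_1$, in which $\partial_{ij}$ hits a factor $Q_{\f v\f w}$, is $\tfrac{p-1}{N^2}\sum_{i,j}s_{ij}\E\big[(GB)_{\f v i}G_{j\f w}(\partial_{ij}Q_{\f v\f w})Q_{\f v\f w}^{p-2}\overline{Q}_{\f v\f w}^p\big]$ plus a conjugate analogue, into which I would insert the expansion \eqref{eq:der_Q} of $\partial_{ij}Q_{\f v\f w}$ together with the formula \eqref{eq:formula_for_E} for $E^{ij}$; using $Q = O_\prec(\lambda)$ and further Ward and Cauchy-Schwarz estimates, each resulting term is $O_\prec(\zeta^3\lambda)\E|Q_{\f v\f w}|^{2p-2}$ (several even gaining an extra $N^{-1}$).

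For part~(ii) with $k\ge2$, by Lemma~\ref{lem:cumulant_factos_estimate} one has $|Z_k| = O(N^{-(k+1)/2})\cdot\tfrac1N\sum_{i,j}\E\big|\partial_{ij}^k\big((GB)_{\f v i}G_{j\f w}Q_{\f v\f w}^{p-1}\overline{Q}_{\f v\f w}^p\big)\big|$. Expanding by Leibniz, the derivatives of $(GB)_{\f v i}$ and $G_{j\f w}$ are controlled by \eqref{3.15} and \eqref{eq:bound_on_G} (each derivative costing $1+\phi$), and each group of $l_m$ derivatives landing on a factor $Q_{\f v\f w}$ is controlled by Lemma~\ref{lem:estimation_of_Q}(i), $\partial_{ij}^{l_m}Q_{\f v\f w} = O_\prec((1+\phi)^{l_m+1}\zeta^2)$; no cancellation is needed here since $Q$ and its derivatives are already small. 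Summing over $i,j$ with the Ward identity and noting that the cumulant factor $N^{-(k+1)/2}$ absorbs the surplus powers of $N$, a short bookkeeping (tracking the single factor $\lambda$ coming from one use of $Q = O_\prec(\lambda)$) yields \eqref{Z_k_est_2}; as in the proof of Lemma~\ref{prop:main_estimations}(ii), the case $k=2$ requires a somewhat more careful handling of the Ward sums but no new idea. Part~(iii) is identical to Lemma~\ref{prop:main_estimations}(iii): with $t = N^{\tau/5-1/2}$, the resolvent-expansion bounds \eqref{3.67}--\eqref{3.69} and the deterministic bound $\norm{G}\le\eta^{-1}$ give, for each $D$, some $\ell=\ell(D)$ with $\hat{\cal R}_{\ell+1}^{ij}=O(N^{-D})$ uniformly in $i,j$; I would omit the details.

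The main obstacle is the exact cancellation in part~(i): one must carry out the $k=1$ expansion carefully enough — keeping track of all the $(1+\delta_{ij})$ and $s_{ij}$ factors — to see $\cal W_{\f v\f w}$ cancel against the diagonal part of $Z_1$, and then extract from the surviving terms the sharp bound $O_\prec(\zeta^3)$, with no spurious factor $1+\phi$, via the joint-index Cauchy-Schwarz and the trace identity $\tfrac1{N^2}\sum_{i,j}|(GB)_{ji}|^2 = \tfrac1{N^2}\tr(G^*GBB^*)$. This sharpness is precisely what makes the intended consequence of the lemma close up: combining (i)--(iii) in \eqref{eq:power_of_Q} and applying H\"older's and Young's inequalities yields $\E|Q_{\f v\f w}|^{2p} = O_\prec\big((1+\phi)^{4p}(\zeta^3\lambda)^{p}\big)$, hence (Markov) the self-improving implication $Q = O_\prec(\lambda)\Longrightarrow Q = O_\prec\big(((1+\phi)^4\zeta^3)^{1/2}\lambda^{1/2}\big)$, which by Lemma~\ref{lem:self-improv} with $q=\tfrac12$, started from the a priori bound $Q = O_\prec((1+\phi)\zeta^2)$ of Lemma~\ref{lem:estimation_of_Q}(i), iterates to the optimal bound $Q = O_\prec((1+\phi)^4\zeta^3)$ of Lemma~\ref{lem:estimation_of_Q}(ii).
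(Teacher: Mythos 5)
Your proposal follows the same strategy as the paper's own proof. In part~(i) you correctly identify the key algebraic cancellation: two of the four terms produced by $\partial_{ij}$ hitting $(GB)_{\f v i}$ and $G_{j\f w}$ (with $\cal C_2(H_{ij})=(1+\delta_{ij})s_{ij}/N$) cancel $\E[\cal W_{\f v\f w}Q_{\f v\f w}^{p-1}\ol Q_{\f v\f w}^{p}]$ exactly, leaving the two terms $-\tfrac{1}{N^2}\sum_{i,j}s_{ij}\E[(G_{\f v i}(GB)_{ji}+(GB)_{\f v i}G_{ji})G_{j\f w}Q_{\f v\f w}^{p-1}\ol Q_{\f v\f w}^p]$; this is exactly the paper's identity~\eqref{eq:whatever}. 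Your joint-index Cauchy--Schwarz with the three Ward estimates $\tfrac1N\sum_i|G_{\f v i}|^2\prec\zeta^2$, $\tfrac1N\sum_j|G_{j\f w}|^2\prec\zeta^2$, $\tfrac1{N^2}\tr(G^*GBB^*)\prec\zeta^2$ does give the sharp $O_\prec(\zeta^3)$ with no extraneous power of $1+\phi$, and your handling of the terms where $\partial_{ij}$ hits $Q$ via \eqref{eq:der_Q} and \eqref{eq:formula_for_E} matches the paper. Part~(iii) and the closing self-improving iteration are as in the paper.

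One small inaccuracy worth flagging in part~(ii). The naive bound you describe --- applying Lemma~\ref{lem:estimation_of_Q}(i) to every derivative group $\partial_{ij}^{l_m}Q_{\f v\f w}$ and using \eqref{eq:bound_on_G} with Ward for the $(GB)_{\f v i}G_{j\f w}$ part --- only yields \eqref{Z_k_est_2} in the regime $q\le k-2$ (where the surplus factor $N^{-(k-1)/2}\zeta^{-q-1}(1+\phi)^{k+q}$ is $O((1+\phi)^{q+1})$). For $q\ge k-1$ (which, since $q\le k-r$, forces $r\le 1$ and gives the three subcases $(r,q)\in\{(0,k),(1,k-1),(0,k-1)\}$ for \emph{every} $k\ge2$, not just $k=2$), the naive bound is genuinely insufficient, and the way the factor $\lambda$ enters is that one must substitute \eqref{eq:der_Q} for one factor $\partial_{ij}Q_{\f v\f w}$ (or for $\partial_{ij}^{2}Q_{\f v\f w}$ in the $(0,k-1)$ case) so as to expose an undifferentiated $Q$ that can be bounded by $O_\prec(\lambda)$, together with the $E^{ij}$ term to which \eqref{eq:formula_for_E} and Ward are applied. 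You allude to "tracking the single factor $\lambda$" but attribute the extra care to "the case $k=2$"; you may be conflating this with the proof of Lemma~\ref{prop:main_estimations_averaging}, where indeed only $k\in\{1,2\}$ need a refined treatment. Here, as in Lemma~\ref{prop:main_estimations}(ii), the refinement is needed for the high-$q$ subcases at all $k\ge2$.

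Also a terminological nit in part~(i): of the four terms produced by $\partial_{ij}$ hitting $(GB)_{\f v i}$ or $G_{j\f w}$, only two are genuinely "diagonal" (those containing $(GB)_{ii}$ or $G_{jj}$); the surviving two contain $(GB)_{ji}$ and $G_{ji}$, which are off-diagonal --- precisely why the Ward identity gives them the gain of $\zeta^3$.
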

	
	We now conclude the proof of Lemma \ref{lem:estimation_of_Q} (ii) using Lemma \ref{lem:cumulant_expansion_for_Q_estimates}. 
	Suppose that $Q=O_\prec(\lambda)$ for some $\lambda\geq \zeta^3$. Combining Lemma \ref{lem:cumulant_expansion_for_Q_estimates} for $\ell=\ell(6p+2)$ together with \eqref{eq:power_of_Q} we obtain 
	\begin{equation*}
	\E |Q_{\f v\f w}|^{2p}=  \sum_{n=1}^{2p}  O_{\prec}\big((1+\phi)^{2n}(\zeta^{3}\lambda)^{n/2}\big)\cdot\E|Q_{\f v\f w}|^{2p-n}
	+O(N^{-6p})\,.
	\end{equation*}
	Since $\zeta \geq N^{-1/2}$, it follows from the H\"older's inequality that 
	\begin{equation*}
	\E |Q_{\f v\f w}|^{2p}\leq 
    \sum_{n=1}^{2p}  O_{\prec}\big((1+\phi)^{2n}(\zeta^{3}\lambda)^{n/2}\big)\cdot(\E|Q_{\f v\f w}|^{2p})^{\frac{2p-n}{2p}}\,,
	\end{equation*}
	and therefore by Young's inequality 
	\begin{align*}
	\E |Q_{\f v\f w}|^{2p}= O_\prec\big(((1+\phi)^4\zeta^3 \lambda)^p\big)\,.
	\end{align*}
	
	Next, since $p$ was arbitrary, we conclude from Markov's inequality that
	\begin{equation} \label{Q_self_improv}
	Q = O_\prec(\lambda) \qquad \Longrightarrow \qquad Q=O_\prec\pB{\pb{(1+\phi)^4 \zeta^3 \lambda}^{1/2}}
	\end{equation}
	for any $\lambda \geq (1+\phi)^4\zeta^3$.
	Moreover, by Lemma \ref{lem:estimation_of_Q} (i) we have the a priori bound $Q=O_\prec((1+\phi)\zeta^2)$. An iteration of \eqref{Q_self_improv}, analogous to Lemma \ref{lem:self-improv}, yields $Q = O_\prec((1 + \phi)^4 \zeta^3)$, as claimed.
\end{proof}

What remains, therefore, is the proof of Lemma \ref{lem:cumulant_expansion_for_Q_estimates}.

\begin{proof}[Proof of Lemma \ref{lem:cumulant_expansion_for_Q_estimates}] $~$
	We begin with (i). A straightforward computation yields
	\begin{equation}\label{eq:whatever}
	\begin{aligned}
	& \E\Big[\cal W_{\f v\f w}Q_{\f v \f w}^{p-1}\overline{Q}_{\f v \f w}^{p}\Big]+Z_1 \\
	&\quad = -\frac{1}{N^2}\sum_{i,j}\E\Big[G_{\f v \f e_i^j}(GB)_{ji}G_{j\f w}Q_{\f v\f w}^{p-1}\overline{Q}_{\f v \f w}^p\Big] - \frac{1}{N^2}\sum_{i,j}\E\Big[(GB)_{\f v \f e_i^j}G_{ji}G_{j\f w}Q_{\f v\f w}^{p-1}\overline{Q}_{\f v\f w}^p\Big]\\
	&\quad +\frac{p-1}{N^2} \sum_{i,j} \E \Big[(GB)_{\f v \f e_i^j}G_{j\f w}(\partial_{ij}Q_{\f v\f w})Q_{\f v\f w}^{p-2}\overline{Q}_{\f v\f w}^{p}\Big] +\frac{p}{N^2} \sum_{i,j}\E \Big[(GB)_{\f v \f e_i^j}G_{j\f w}(\partial_{ij}\overline{Q}_{\f v\f w})Q_{\f v\f w}^{p-1}\overline{Q}_{\f v\f w}^{p-1}\Big]\,.
	\end{aligned}
	\end{equation}
	We emphasize that here an important cancellation takes place between the two terms on the left-hand side, whereby two large terms arising from the computation of $Z_1$ precisely cancel out the two terms of $\cal W_{\f v \f w}$.
	Using the Ward identity and $s_{ij} = O(1)$, one can verify that the absolute value of the first two terms on the right-hand side of \eqref{eq:whatever} is $O_\prec(\zeta^3) \E \abs{Q_{\f v \f w}}^{2p - 1}$. Hence, it suffices to show that the remaining two terms are $O_\prec(\zeta^3\lambda)  \E \abs{Q_{\f v \f w}}^{2p - 2}$. Taking for example the third term on the right-hand side of \eqref{eq:whatever}, by \eqref{eq:der_Q} we find
	\begin{multline*}
	\bigg|\frac{1}{N^2}\sum_{i,j}\E \Big[(GB)_{\f v \f e_i^j}G_{j\f w}(\partial_{ij}Q_{\f v\f w})Q_{\f v\f w}^{p-2}\overline{Q}_{\f v \f w}^p\Big]\bigg|\\
	\leq \frac{1}{N^2}\sum_{i,j}\E\Big[\big| (GB)_{\f v\f e_i^j}G_{j\f w}(G_{\f v i}Q_{j\f w} + G_{\f v j}Q_{i\f w} + Q_{\f v i}G_{j\f w}+Q_{\f v j}G_{i\f w})\big|\cdot |Q_{\f v \f w}|^{2p-2}\Big]\\
	+ \frac{1}{N^2}\sum_{i,j}\E\big[\big|(GB)_{\f v \f e_i^j}G_{j\f w}E_{\f v\f w}^{ij}\big|\cdot |Q_{\f v \f w}|^{2p-2}\big]\,.
	\end{multline*}
	Due to the assumption $G=O_\prec(\lambda)$ and using the Ward identity, one can bound the first term on the right-hand side by $O_\prec(\zeta^3 \lambda)\E|Q_{\f v \f w}|^{2p-2}$. As for the last term, using \eqref{eq:formula_for_E} and the Ward identity on the indices $i,j$ and when possible also $a,b$, we find that it is $O_\prec(\zeta^6)\E|Q_{\f v \f w}|^{2p-2}$. This conclude the proof of part (i).
	
	Next, we prove (ii). Fix $k\geq 2$ and estimate
	\begin{equation*}
	\begin{aligned}
	|Z_k| &= O(N^{-\frac{k-1}{2}}) \cdot \frac{1}{N^2}\sum_{i,j}\E\big|\partial_{ij}^k\big((GB)_{\f v i}G_{j\f w}Q_{\f v \f w}^{p-1}\overline{Q}_{\f v \f w}^{p}\big)\big| \\
	& = O(N^{-\frac{k-1}{2}}) \sum_{\substack{r,s,t\geq 0 \\ r+s+t=k}} \frac{1}{N^2}\sum_{i,j}\E\big|(\partial_{ij}^r((GB)_{\f v i}G_{j
		\f w}))(\partial_{ij}^s Q_{\f v\f w}^{p-1})(\partial_{ij}^t \overline{Q}_{\f v\f w}^p)\big|\,.
	\end{aligned}
	\end{equation*}
	As the sum over $r,s,t$ is finite it suffices to deal with each term separately. To simplify notation, we drop the complex conjugates of $Q$ (which play no role in the subsequent analysis), and estimate the quantity
	\begin{equation}\label{eq:whatever_2}
	O(N^{-\frac{k-1}{2}}) \cdot \frac{1}{N^2}\sum_{i,j}\E\big|\big(\partial_{ij}^r((GB)_{\f v i}G_{j
		\f w})\big)\big(\partial_{ij}^{k-r} Q_{\f v\f w}^{2p-1}\big)\big|
	\end{equation}
	for $r = 0, \dots, k$.
    Computing the derivative $\partial_{ij}^{k - r}$, we find that \eqref{eq:whatever_2} is bounded by a sum of terms of the form
	\begin{equation} \label{sql}
	O(N^{-\frac{k-1}{2}})  \frac{1}{N^2}\sum_{i,j}\E\,\Big|\big(\partial_{ij}^r((GB)_{\f v i}G_{j
		\f w})\big) \Big(\prod_{m=1}^{q}\big(\partial_{ij}^{l_m}Q_{\f v\f w})\Big)Q_{\f v\f w}^{2p-1-q}\Big|\,,
	\end{equation}
	where the sum ranges over integers $q = 0, \dots, (k - r) \wedge (2p - 1)$ and $l_1, \dots, l_q \geq 1$ satisfying $l_1 + \cdots + l_q = k - r$.
	Using Lemma \ref{lem:estimation_of_Q} (i), we find that \eqref{sql} is bounded by
	\begin{equation*}
	O_\prec(N^{-\frac{k-1}{2}}) \frac{1}{N^2}\sum_{i,j}\E\big[\big|\partial_{ij}^r((GB)_{\f v i}G_{j
		\f w})\big| (1+\phi)^{k-r+q}\zeta^{2q} |Q_{\f v\f w}|^{2p-1-q}\big]\,.
	\end{equation*}
	
	Note that by \eqref{3.15} the derivative $\partial_{ij}^r((GB)_{\f v i}G_{j\f w})$ can be written as a sum of terms, each of which is a product of $r+2$ entries of the matrices $GB$ or $G$, with one entry of the form $(GB)_{\f v a}$ or $G_{\f v a}$ with $a\in \{i,j\}$ and one of the form $G_{a\f w}$ with $a\in \{i,j\}$. Using the Ward identity for the two specified terms in the product and using \eqref{eq:bound_on_G} to bound the remaining terms in the product, we conclude that $\frac{1}{N^2}\sum_{i,j}\big|(\partial_{ij}^r((GB)_{\f v i}G_{j\f w})\big|\prec (1+\phi)^{r}\zeta^2$, and therefore 
	\begin{align}
	\eqref{sql} &\prec O(N^{-\frac{k-1}{2}})  (1+\phi)^{k+q}\zeta^{2q+2}\E|Q_{\f v\f w}|^{2p-1-q}
	\notag \\
	& = O(N^{-\frac{k-1}{2}}\zeta^{-q-1}(1+\phi)^{k+q})\zeta^{3q+3}\E|Q_{\f v\f w}|^{2p-1-q}
	\notag \\ \label{sro}
	&\leq O(\zeta^{k-2-q}(1+\phi)^{q+1})\zeta^{3q+3}\E|Q_{\f v\f w}|^{2p-1-q}\,,
	\end{align}
	where for the last inequality we used $N^{-1/2}(1+\phi)\leq \zeta$. Clearly, if $q\leq k-2$ then \eqref{sro} is bounded by $O_{\prec}((1+\phi)^{q+1}\zeta^{3q+3})\cdot \E |Q_{\f v \f w}|^{2p-1-q}$, as desired.
	
	What remains, therefore, is to estimate \eqref{sql}  for $q \geq k - 1$, which we assume from now on.
	Because $k \geq 2$ by assumption, we find that $q \geq 1$. Moreover, since $q = 0, \dots, (k - r) \wedge (2p - 1)$, we find that $r \leq 1$. Thus, it remains to consider the three cases  $(r,q)=(0,k)$, $(r,q)=(1,k-1)$ and $(r,q)=(0,k-1)$. We deal with them separately.
	
	\paragraph{The case $(r,q)=(0,k)$} In this case $l_1=l_2=\dots=l_q=1$, so that \eqref{sql} reads
	\begin{multline}\label{eq:whatever_3}
	O(N^{-\frac{k-1}{2}}) \cdot \frac{1}{N^2} \sum_{i,j} \E\big|(GB)_{\f v i}G_{j\f w} (\partial_{ij}Q_{\f v\f w})^k Q_{\f v\f w}^{2p-1-k}\big|\\
	\leq  O(N^{-\frac{k-1}{2}}) \cdot \frac{1}{N^2} \sum_{i,j} \E\big|(GB)_{\f v i}G_{j\f w} (G_{\f v i}Q_{j\f w} + G_{\f v j}Q_{i\f w} + Q_{\f v i}G_{j\f w} + Q_{\f v j}G_{i\f w} )(\partial_{ij}Q_{\f v\f w})^{k-1} Q_{\f v\f w}^{2p-1-k}\big|\\
	+  O(N^{-\frac{k-1}{2}}) \cdot \frac{1}{N^2} \sum_{i,j} \E\big|(GB)_{\f v i}G_{j\f w} E_{\f v\f w}^{ij}(\partial_{ij}Q_{\f v\f w})^{k-1} Q_{\f v\f w}^{2p-1-k}\big|\,,
	\end{multline}
	where for the inequality we used \eqref{eq:der_Q}.
	
	We estimate the second line of \eqref{eq:whatever_3} using the Ward identity on the summations over $i$ and $j$, using the bound $Q=O_\prec(\lambda)$, and the bound $\partial_{ij}Q_{\f v\f w}\prec (1+\phi)^2\zeta^2$ from Lemma \ref{lem:estimation_of_Q} (i). The result is
	\begin{equation*}
	O(N^{-\frac{k-1}{2}}) \zeta^3 \lambda (1+\phi)^{2k}\zeta^{2(k-1)}\E|Q_{\f v\f w}|^{2p-1-k} \leq (1+\phi)^{2k}\zeta^{3k}\lambda  \E|Q_{\f v\f w}|^{2p-1-k}\,,
	\end{equation*}
	as desired, where we used $N^{-1/2} \leq \zeta$.
	For the third line of \eqref{eq:whatever_3}, we use \eqref{eq:formula_for_E} and the Ward identity to obtain
	\begin{equation*}
	\frac{1}{N^2} \sum_{i,j}\big|(GB)_{\f v i}G_{j\f w}E_{\f v\f w}^{ij}\big| \prec \zeta^6\,,
	\end{equation*}
	which, together with the bound $|\partial_{ij}Q_{\f v\f w}|\prec (1+\phi)^2\zeta^2$ obtained in Lemma \ref{lem:estimation_of_Q} (i), gives the bound
	\begin{equation*}
	(1+\phi)^{2k}\zeta^{3k+3}\E|Q_{\f v\f w}|^{2p-1-k}
	\end{equation*}
	for the third line of \eqref{eq:whatever_3}. Since $\zeta^3 \leq \lambda$, this bound is good enough.
	
	\paragraph{The case $(r,q)=(1,k-1)$} In this case $l_1=l_2=\dots=l_{k-1}=1$, and \eqref{sql} reads
	\begin{equation}\label{eq:whatever_10}
	O(N^{-\frac{k-1}{2}})  \frac{1}{N^2}\sum_{i,j}\E\big|(\partial_{ij}((GB)_{\f v i}G_{j \f w}))(\partial_{ij}Q_{\f v\f w})^{k-1} Q_{\f v\f w}^{2p-k}\big|\,.
	\end{equation}
	Using \eqref{eq:der_Q} to rewrite one factor $\partial_{ij}Q_{\f v \f w}$, we conclude that \eqref{eq:whatever_10} equals
	\begin{multline*}
	O(N^{-\frac{k-1}{2}})  \frac{1}{N^2}\sum_{i,j}\E\big|(\partial_{ij}((GB)_{\f v i}G_{j \f w}))(Q_{\f v i}G_{j\f w}+Q_{\f v j}G_{i\f w} + G_{\f v i}Q_{j\f w} + G_{\f v j}Q_{i\f w})(\partial_{ij}Q_{\f v\f w})^{k-2} Q_{\f v\f w}^{2p-k}\big|\\
	+ O(N^{-\frac{k-1}{2}})  \frac{1}{N^2}\sum_{i,j}\E\big|(\partial_{ij}((GB)_{\f v i}G_{j \f w})E_{\f v\f w}^{ij}(\partial_{ij}Q_{\f v\f w})^{k-2} Q_{\f v\f w}^{2p-k}\big|\,.
	\end{multline*}
	We now apply a similar argument to the one used in the previous case $(r,q)=(0,k)$.  We use \eqref{eq:formula_for_E}, the assumption $Q=O_\prec(\lambda)$ to bound $Q$, and the Ward identity to bound the product of entries of $G$ and $(GB)$. This gives
	\begin{equation*}
	\frac{1}{N^2}\sum_{i,j}\big|(\partial_{ij}((GB)_{\f v i}G_{j \f w}))(\partial_{ij}Q_{\f v\f w})\big| \prec (1+\phi)(\lambda \zeta^2 + \zeta^5)\,.
	\end{equation*}
	Using $\zeta^3 \leq \lambda$  and the bound $|\partial_{ij}Q_{\f v\f w}|^{k-2} \prec (1+\phi)^{2k-4}\zeta^{2k-4}$ from Lemma \ref{lem:estimation_of_Q} (i), we therefore get
	\begin{equation*}
	\begin{aligned}
	\eqref{eq:whatever_10}&= O(N^{-\frac{k-1}{2}}) (1+\phi) \lambda \zeta^2 (1+\phi)^{2k-4}\zeta^{2k-4} \E|Q_{\f v\f w}|^{2p-k}\\
	&\prec (1+\phi)^{2k} \zeta^{3k-3}\lambda\E|Q_{\f v\f w}|^{2p-k}\,,
	\end{aligned}
	\end{equation*}
	as desired
	
	\paragraph{The case $(r,q)=(0,k-1)$} Since $l_1+\dots+l_{k-1}=k$ and $l_m\geq 1$ for every $m\in \{1,\dots,k-1\}$, there exists exactly one $m$ such that $l_m=2$ and the remaining $l_m$'s are $1$. Hence, \eqref{sql} reads
	\begin{multline} \label{porsas}
	O(N^{-\frac{k-1}{2}}) \cdot \frac{1}{N^2}\sum_{i,j}\E\big|(GB)_{\f v i}G_{j \f w}(\partial_{ij}^2 Q_{\f v\f w})(\partial_{ij}Q_{\f v\f w})^{k-2} Q_{\f v\f w}^{2p-k}\big|\\
	\leq 	O(N^{-\frac{k-1}{2}}) \cdot \frac{1}{N^2}\sum_{i,j}\E\big[\big|(GB)_{\f v i}G_{j \f w}(\partial_{ij}(G_{\f v i}Q_{j\f w} + G_{\f v j}Q_{i\f w} + Q_{\f v i}G_{j\f w} + Q_{\f v j}G_{i\f w}))(\partial_{ij}Q_{\f v\f w})^{k-2} Q_{\f v\f w}^{2p-k}\big|\big]\\
	+ 	O(N^{-\frac{k-1}{2}}) \cdot \frac{1}{N^2}\sum_{i,j}\E\big|(GB)_{\f v i}G_{j \f w}(\partial_{ij}E^{ij}_{\f v\f w})(\partial_{ij}Q_{\f v\f w})^{k-2} Q_{\f v\f w}^{2p-k}\big|\,,
	\end{multline}
	where we used \eqref{eq:der_Q}. We deal with each of the terms on the right-hand side separately. For the third line of \eqref{porsas}, using the derivative formulas \eqref{3.15} and \eqref{eq:formula_for_E} together with the Ward identity, we find
	\begin{equation} \label{vasikka}
	\frac{1}{N^2}\sum_{i,j} \big|(GB)_{\f v i}G_{j \f w}\partial_{ij}E_{\f v\f w}^{ij}\big| \prec (1+\phi)^3 \zeta^5 \leq (1 + \phi)^3 \lambda \zeta^2\,.
	\end{equation}
	Using \eqref{vasikka} which we can estimate the third line of \eqref{porsas} by the right-hand side of \eqref{Z_k_est_2}, as in the previous case.
	
	All four terms in the second line of \eqref{porsas} are similar, and we estimate
	\begin{multline} \label{sika}
	O(N^{-\frac{k-1}{2}}) \cdot \frac{1}{N^2}\sum_{i,j}\E\big|(GB)_{\f v i}G_{j \f w}(\partial_{ij}G_{\f v i}Q_{j\f w})(\partial_{ij}Q_{\f v\f w})^{k-2} Q_{\f v\f w}^{2p-k}\big|\\
	= O(N^{-\frac{k-1}{2}}) \cdot \frac{1}{N^2}\sum_{i,j}\E\big|(GB)_{\f v i}G_{j \f w}(-G_{\f v i}G_{ji}Q_{j\f w}-G_{\f v j}G_{ii}Q_{j\f w} + G_{\f v i}\partial_{ij}Q_{j\f w})(\partial_{ij}Q_{\f v\f w})^{k-2} Q_{\f v\f w}^{2p-k}\big|\,.
	\end{multline}
	Using the bound $Q=O_\prec(\lambda)$ to bound $Q_{j\f w}$, the bound $|\partial_{ij}Q_{\f v\f w}|\prec (1+\phi)^2\zeta^2$ from Lemma \ref{lem:estimation_of_Q} (i) and the Ward identity (which can be applied to obtain at least $\zeta^3$ in the last term and $\zeta^2$ in the first two terms, we conclude that \eqref{sika} is bounded by
	\begin{equation*}
	O(N^{-\frac{k-1}{2}})(1+\phi)^2(\zeta^2 \lambda + \zeta^5)(1+\phi)^{2k-4}\zeta^{2k-4} \E|Q_{\f v\f w}|^{2p-k}\\
	=(1+\phi)^{2k-2}\zeta^{3k-3}\lambda \E|Q_{\f v\f w}|^{2p-k}\,,
	\end{equation*}
	which is bounded by the right-hand side of \eqref{Z_k_est_2}. This concludes the proof of part (ii).

	Finally, the proof of (iii) is similar to that of Lemma \ref{prop:main_estimations} (iii), and we omit the details.
\end{proof}

\section{Proof of Theorem \ref{thm:Wigner}}

\subsection{Preliminaries}
We start with a few simple deterministic estimates.

\begin{lemma} \label{lem:m_eta}
Under the assumptions of Theorem \ref{thm:Wigner} we have $\eta = O(\im m)$.
\end{lemma}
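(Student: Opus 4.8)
The plan is to read the bound off directly from the self-consistent equation \eqref{eq_spectr_A}, using only the standing assumptions $\norm{A} = O(1)$ and $\norm{M} = O(1)$ on $\f S$. Writing $z = E + \ii \eta$ and taking imaginary parts in $m = \int \frac{\nu(\dd a)}{a - m - z}$, and using the elementary identity $\im \frac{1}{a - m - z} = \frac{\eta + \im m}{\abs{a - m - z}^2}$ together with $\im m \geq 0$, I obtain
\begin{equation*}
\im m = (\eta + \im m) \int \frac{\nu(\dd a)}{\abs{a - m - z}^2} \geq \eta \int \frac{\nu(\dd a)}{\abs{a - m - z}^2}\,.
\end{equation*}
Since $\nu$ from \eqref{def_nu} is a probability measure supported in $[-\norm A, \norm A]$, the integral is bounded below by $\pb{\sup_{a \in [-\norm A,\norm A]} \abs{a - m - z}}^{-2}$, so it suffices to prove that $\abs{a - m - z} = O(1)$ for all such $a$.

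This last estimate is immediate from the hypotheses: $\abs{a} \leq \norm A = O(1)$; since $z \in \f S \subseteq \f D$ we have $\abs{z} = \sqrt{E^2 + \eta^2} \leq \sqrt 2\, \tau^{-1} = O(1)$; and since $m = \ul M$ with $\norm M = O(1)$ on $\f S$ we have $\abs{m} = \absb{\tfrac1N \tr M} \leq \norm M = O(1)$. Hence $\abs{a - m - z} \leq \norm A + \norm M + \sqrt 2\, \tau^{-1} = O(1)$, and substituting back gives $\eta \leq C\, \im m$ for a constant $C$, i.e.\ $\eta = O(\im m)$.

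There is essentially no obstacle here: the only point requiring (minimal) care is that all three of $a$, $z$, $m$ entering $a - m - z$ are bounded by constants, which is exactly what $\norm A = O(1)$, $\norm M = O(1)$ and the definition \eqref{def_S} of $\f D$ guarantee. As an alternative one could instead argue from the integral representation \eqref{m_varrho}, using that $\varrho = \mu \boxplus \nu$ is compactly supported with $\supp \varrho \subseteq [-(2 + \norm A),\, 2 + \norm A]$ and that $E$, $\eta$ are bounded on $\f D$; but the route via \eqref{eq_spectr_A} above is more self-contained and avoids invoking support properties of the free convolution.
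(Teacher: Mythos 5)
Your argument is correct, and it takes a genuinely different route from the paper's. The paper proves the bound via the Stieltjes-transform representation \eqref{m_varrho}: it first asserts (with a brief ``it is not hard to deduce'') that $\supp\varrho$ lies in an $O(1)$-ball, and then reads off $\im m(z)=\int\frac{\eta}{(E-x)^2+\eta^2}\,\varrho(\dd x)\geq c\eta$ using the boundedness of $E-x$ and $\eta$ on $\f D$. You instead work directly with the self-consistent equation \eqref{eq_spectr_A} for $m$ and the empirical measure $\nu$: taking imaginary parts gives $\im m=(\eta+\im m)\int\frac{\nu(\dd a)}{\abs{a-m-z}^2}\geq\eta\int\frac{\nu(\dd a)}{\abs{a-m-z}^2}$, and you bound the integrand below by a constant using $\abs{a}\leq\norm{A}=O(1)$, $\abs{z}=O(1)$ on $\f D$, and $\abs{m}\leq\norm{M}=O(1)$ (which is a standing hypothesis of Theorem~\ref{thm:Wigner} on $\f S$). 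The trade-off is as you say: the paper's proof requires establishing compactness and boundedness of $\supp\varrho$ (a property of the free convolution $\mu\boxplus\nu$, left unproved in the paper), but makes no explicit use of $\norm{M}=O(1)$; your proof invokes the $\norm{M}=O(1)$ hypothesis but is otherwise entirely self-contained and avoids any statement about $\varrho$. Both are valid; yours is the cleaner fit given that $\norm{M}=O(1)$ is already assumed throughout the section.
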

\begin{proof}
From $\norm{A} = O(1)$, \eqref{eq_spectr_A}, and \eqref{m_varrho}, it is not hard to deduce that the support of $\varrho$ lies in a ball of radius $O(1)$ around the origin. The claim then follows from the fact that $\im m(z) = \int \frac{\eta}{(E - x)^2 + \eta^2} \nu(\dd x) \geq c \eta$, since $(E - x) = O(1)$ on the support of $\nu$.
\end{proof}

\begin{lemma} \label{lem:imM}
Under the assumptions of Theorem \ref{thm:Wigner} we have $\norm{\im M} = O(\im m)$.
\end{lemma}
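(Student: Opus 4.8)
The plan is to use the explicit representation of the solution. Combining \eqref{def_M} with the choice \eqref{def_S_wig}, the matrix $M$ satisfies $M = (A - z - m)^{-1}$, where $m = \ul M$ solves \eqref{eq_spectr_A}; this is exactly the reformulation already recorded in the discussion around \eqref{m_varrho}, and in particular $m$ is the Stieltjes transform of a probability measure, so that $\im m > 0$ for $z \in \C_+$.

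The key step is a short resolvent computation for $\im M$. Since $A = A^*$ while $m$ and $z$ are scalars, I would write
\begin{equation*}
M - M^* = M\bigl[(M^*)^{-1} - M^{-1}\bigr]M^* = M\bigl[(z+m) - (\bar z + \bar m)\bigr]M^* = 2\ii\,(\eta + \im m)\, M M^*\,,
\end{equation*}
so that $\im M = (\eta + \im m)\, M M^*$. Since $M M^* \geq 0$ with $\norm{M M^*} = \norm{M}^2$ and $\eta + \im m > 0$, taking operator norms gives $\norm{\im M} = (\eta + \im m)\,\norm{M}^2$.

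To finish, I would invoke the standing hypothesis $\norm{M} = O(1)$ of Theorem \ref{thm:Wigner} to get $\norm{\im M} = O(\eta + \im m)$, and then Lemma \ref{lem:m_eta} to replace $\eta + \im m$ by $O(\im m)$, yielding $\norm{\im M} = O(\im m)$. I do not anticipate any genuine obstacle here: the only point deserving a word of justification is that the closed form $M = (A - z - m)^{-1}$ does coincide with the $\cal M_+$-solution of \eqref{def_M} (uniqueness from Lemma \ref{lem:sol_Pi}), but this identification is standard and was already used earlier in the text.
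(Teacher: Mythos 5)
Your argument is the same as the paper's: the paper also computes $\im M = (\eta + \im m)\,MM^*$ via the resolvent identity and then applies Lemma \ref{lem:m_eta}, with the bound $\|M\| = O(1)$ (from the hypotheses of Theorem \ref{thm:Wigner}) used tacitly. Your write-up simply makes that last invocation explicit; there is no substantive difference.
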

\begin{proof}
By the resolvent identity we have
\begin{equation*}
\norm{\im M} = \norm{M - M^*}/2 = \norm{MM^*} \im (z + m) =\norm{MM^*} (\eta+\im  m)\,,
\end{equation*}
and the claim follows by Lemma \ref{lem:m_eta}.
\end{proof}

For the remainder of this section we abbreviate $\delta \deq \tau/10$.

\begin{definition}
A \emph{control parameter} is a deterministic continuous function $\phi \col \f S \to [N^{-1},N^\delta]$ such that $\eta \mapsto \phi(E + \ii \eta)$ is decreasing for each $E$.
\end{definition}
For a control parameter $\phi$ we define
\begin{equation} \label{def_h}
h(\phi) \deq \sqrt{\frac{\im m + \phi}{N \eta}}\,.
\end{equation}
The following result is a simple consequence of Theorem \ref{thm:main_general} applied to the special case of Wigner matrices.

\begin{lemma} \label{lem:main_est_Wig}
Suppose that the assumptions of Theorem \ref{thm:Wigner} hold. Let $\phi$ be a control parameter. Let $z \in \f S$ and suppose that $G - M = O_\prec(\phi)$ at $z$.
\begin{enumerate}
\item
We have at $z$
\begin{equation*}
\Pi(G) = O_\prec \pb{(1 + \phi)^3 h(\phi)}\,.
\end{equation*}
\item
For any deterministic $B \in \C^{N \times N}$ satisfying $\norm{B} = O(1)$ we have at $z$
\begin{equation*}
	\underline{B\Pi(G)}=O_{\prec}\pb{(1+\phi)^{6} h(\phi)^2}\,.
\end{equation*}
\end{enumerate}
\end{lemma}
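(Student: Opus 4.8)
The plan is to read Lemma~\ref{lem:main_est_Wig} off from Theorem~\ref{thm:main_general} in three steps: verify the hypotheses of Theorem~\ref{thm:main_general} and record the deterministic simplifications coming from Lemmas~\ref{lem:m_eta} and~\ref{lem:imM}; apply Theorem~\ref{thm:main_general}; and finally pass from the general self-consistent map \eqref{def_S_gen} to the Wigner map \eqref{def_S_wig} at negligible cost. Throughout, let $\cal S_0$ denote the linear map \eqref{def_S_gen}, let $\Pi_0$ be the associated map and $M_0\in\cal M_+$ its solution of \eqref{def_M}, and let $\zeta$ be as in \eqref{eq:psi}.

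\emph{Step 1.} Since $\f S\subseteq\f D$ we have $z\in\f D$; since $\phi$ is a control parameter, its values lie in $[N^{-1},N^{\delta}]=[N^{-1},N^{\tau/10}]$, which is exactly the range allowed in Theorem~\ref{thm:main_general}; and $\norm M=O(1)$ is a standing assumption of Theorem~\ref{thm:Wigner}. The only subtlety is that Theorem~\ref{thm:main_general} is stated in terms of $M_0$, whereas the $M$ here solves \eqref{def_M} for \eqref{def_S_wig}. Under Assumption~\ref{ass:Wigner}, the maps \eqref{def_S_gen} and \eqref{def_S_wig} differ, on matrices of norm $O(1)$, by one whose rescaled weights $\pb{N\E|H_{ij}|^2}_{ij}$ agree with the all-ones matrix off the diagonal; this difference therefore has operator norm $O(N^{-1})$ there, and by the stability of \eqref{eq_spectr_A} assumed in Theorem~\ref{thm:Wigner} one gets $\norm{M_0-M}=O(N^{-1})=O(\phi)$. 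Hence $\norm{M_0}=O(1)$, $G-M_0=O_\prec(\phi)$, and $\norm{\im M_0}=\norm{\im M}+O(N^{-1})$. Moreover, by Lemma~\ref{lem:m_eta} we have $\eta=O(\im m)$ and by Lemma~\ref{lem:imM} we have $\norm{\im M}=O(\im m)$; since also $N^{-1}=O(\im m)$ (as $\eta\ge N^{-1}$), it follows that $\norm{\im M_0}+\phi+\eta=O(\im m+\phi)$, hence $\zeta=O(h(\phi))$, and likewise $N^{-1}=O(h(\phi)^2)$.

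\emph{Step 2.} Theorem~\ref{thm:main_general}, now applicable, gives $\Pi_0(G)=O_\prec\pb{(1+\phi)^3\zeta}=O_\prec\pb{(1+\phi)^3 h(\phi)}$ and $\ul{B\,\Pi_0(G)}=O_\prec\pb{(1+\phi)^6\zeta^2}=O_\prec\pb{(1+\phi)^6 h(\phi)^2}$ by Step 1.

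\emph{Step 3.} It remains to replace $\Pi_0$ by the lemma's $\Pi$. Since $(\Pi-\Pi_0)(G)=\ul G\,G-\cal S_0(G)G$, the splitting \eqref{eg} gives $(\Pi-\Pi_0)(G)=\ul G\,G-\cal J-\cal K$, and a short computation shows that $\ul G\,G-\cal K=-\tfrac1N\Lambda G$ with $\Lambda$ diagonal and $\norm{\Lambda}=O_\prec(1+\phi)$; in particular its entries are $O_\prec\pb{N^{-1}(1+\phi)}$ times those of $G$. Using the Ward-identity estimates already recorded in \eqref{egWard}--\eqref{egWard2} — and, in the averaged case, the bound $N^{-1}=O(h(\phi)^2)$ from Step 1 to control the $\ul G\,G-\cal K$ contribution — one gets $(\Pi-\Pi_0)(G)=O_\prec\pb{(1+\phi)h(\phi)}$ and $\ul{B(\Pi-\Pi_0)(G)}=O_\prec\pb{(1+\phi)h(\phi)^2}$, which are dominated by the main terms of Step 2. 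Thus $\Pi(G)=O_\prec\pb{(1+\phi)^3 h(\phi)}$ and $\ul{B\Pi(G)}=O_\prec\pb{(1+\phi)^6 h(\phi)^2}$, as claimed. The only step that is not pure bookkeeping is this one: entrywise smallness of $\ul G\,G-\cal S_0(G)G$ does not by itself control its weak-operator or averaged values, so one must carry out the internal summations and invoke the Ward identity — but this is a verbatim repetition of the removal of $\cal J$ in Section~\ref{subsec:preliminarties_to_proofs}, and so poses no real obstacle.
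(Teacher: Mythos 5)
Your proof follows essentially the same route as the paper's: apply Theorem~\ref{thm:main_general} to bound the general map $\Pi_0$ (built from $\cal S_0 = \E[H\cdot H]$ as in \eqref{def_S_gen}), then control the correction $(\Pi-\Pi_0)(G) = gG - \cal S_0(G)G$ via the Ward identity. Your identification of $gG - \cal K = -\tfrac1N\Lambda G$ with $\Lambda$ diagonal and $\norm{\Lambda}\prec 1+\phi$ is correct and is the same computation the paper performs when it bounds $|\cal K_{\f v\f w}-gG_{\f v\f w}|=O(1)\cdot\tfrac1N\sum_i|G_{ii}G_{i\f w}v_i|$; Steps 2 and 3 are therefore fine.

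The one place where you depart from the paper is in how you handle the mismatch between the Wigner $M$ (solving \eqref{def_M} with \eqref{def_S_wig}) and the $M_0$ solving the general equation that Theorem~\ref{thm:main_general} formally references. You pass through $M_0$ and assert $\norm{M-M_0}=O(N^{-1})$ \emph{"by the stability of \eqref{eq_spectr_A}"}. This does not follow directly: Definition~\ref{def:stability} is a stability statement for the scalar equation in $m$ alone, not for the matrix Dyson equation, and moreover the hypothesis $\norm{M_0}=O(1)$ is not among the assumptions of Theorem~\ref{thm:Wigner}, so an argument via $M_0$ requires additional work you do not supply. The paper's proof avoids $M_0$ entirely: it relies (implicitly) on the observation that the proof of Theorem~\ref{thm:main_general} --- Propositions~\ref{prop:est1} and~\ref{prop:est2} --- never uses the equation $\Pi_0(M)=0$, only the hypotheses $\norm M=O(1)$, $G-M=O_\prec(\phi)$, and the definition of $\zeta$. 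One can therefore run that proof verbatim with the Wigner $M$, obtaining $\Pi_0(G)=O_\prec((1+\phi)^3\zeta)$ with $\zeta$ built from the Wigner $\norm{\im M}$, and then Lemmas~\ref{lem:m_eta} and~\ref{lem:imM} give $\zeta=O(h(\phi))$. This also makes the paper's second estimate in \eqref{s_G_comp} (the deterministic $\cal S_0(M)M-mM$ bound) a companion statement rather than a logically separate step, since $M$ is never required to solve the general equation. In short: your decomposition and Ward-identity estimates are right, but the $M_0$-detour in Step~1 introduces an unjustified claim that the paper's route does not need.
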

\begin{proof}
To prove (i), by Lemmas \ref{lem:m_eta} and \ref{lem:imM}, it suffices to prove
\begin{equation} \label{s_G_comp}
\cal S(G) G - g G = O_\prec\pb{(1 + \phi) h(\phi)} \,, \qquad \cal S(M) M - m M = O_\prec\pb{(1 + \phi) h(\phi)}\,,
\end{equation}
where $\cal S$ denotes the map from \eqref{def_S_gen}.
By \eqref{eg} we have $\cal S(G)G=\cal J+\cal K$, and $\eqref{egWard}$ shows $\cal J =O_{\prec} ((1+\phi)\zeta)=O_{\prec}((1+\phi)h(\phi))$. Also, Assumption \ref{ass:Wigner} shows $s_{ij}=1+O(\delta_{ij})$, and we obtain from \eqref{eg} that 
\begin{equation*}
|\cal K_{\f v \f w}-gG_{\f v\f w}|=O(1)\cdot\frac{1}{N}\sum_{i}|G_{ii}G_{i \f w}v_i|=O_{\prec}((1+\phi)h(\phi))
\end{equation*}
for any fixed $\f v, \f w \in \bb S$, where in the last step we used \eqref{eq:bound_on_G}, $|v_{i}| \le 1$, and the Ward identity. This proves the first estimate of \eqref{s_G_comp}. The second estimate of \eqref{s_G_comp} is proved similarly.

The proof of (ii) is analogous, using the estimates \eqref{egWard2} and
\begin{equation*}
\begin{aligned}
|\ul{\cal K B}-g\ul{GB}|&=O(1)\cdot \frac{1}{N^2}\sum_{i,j}|G_{ii}G_{ij}B_{ji}|\\&=O_{\prec}(1+\phi)\cdot \Big(\frac{1}{N^2}\sum_{i,j}|G_{ij}|^2\Big)^{1/2} \cdot \Big( \frac{1}{N^2}\sum_{i,j}|B_{ij}|^2\Big)^{1/2}=O_{\prec}((1+\phi)h(\phi)^2)\,,
\end{aligned}
\end{equation*}
where in the last step we used Lemma \ref{lem:Ward_identity} and the estimate $\sum_{i,j}|B_{ij}|^2=\tr BB^{*} =O(N)$, and Lemmas \ref{lem:m_eta} and \ref{lem:imM}. This concludes the proof.
\end{proof}

For $u \in \C$ define
\begin{equation*}
R_u \deq (A - u - z)^{-1}\,.
\end{equation*}
With this notation, we have
\begin{equation} \label{emakko}
M = R_m \,, \qquad G = R_g - R_g \Pi(G)\,.
\end{equation}
Taking the difference yields
\begin{equation} \label{G-M_main}
G - M = R_g - R_m - R_g \Pi(G)\,.
\end{equation}
Moreover, taking the trace of the second identity of \eqref{emakko} yields
\begin{equation} \label{s-m_main}
\pi(g) = - \ul{R_g \Pi(G)}\,,
\end{equation}
where we defined
\begin{equation*}
\pi(u) \equiv \pi(u,z) \deq u - \ul{R_u} = u - \int \frac{\nu(\dd a)}{a - u - z}\,.
\end{equation*}
Note that $\pi(m) = 0$, by \eqref{eq_spectr_A}.
The formulas \eqref{G-M_main} and \eqref{s-m_main} are the main identities behind the following analysis.

In order to estimate the error terms on the right-hand side of \eqref{G-M_main} and \eqref{s-m_main} using Lemma \ref{lem:main_est_Wig}, we need to deal with the fact that the matrix $R_g$ is random.

\begin{definition}
An event $\Omega$ \emph{holds with high probability} if $\ind{\Omega^c} \prec 0$, i.e.\ if $\P(\Omega^c) \leq N^{-D}$ for all $D > 0$ and large enough $N$ depending on $D$.
\end{definition}

\begin{lemma} \label{lem:Rg_est}
Suppose that the assumptions of Lemma \ref{lem:main_est_Wig} hold. Suppose that $\norm{R_g} = O(1)$ with high probability. Then the conclusions (i) and (ii) of Lemma \ref{lem:main_est_Wig} hold with $\Pi(G)$ replaced by $R_g \Pi(G)$.
\end{lemma}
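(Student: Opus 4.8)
The plan is to transfer the deterministic-$B$ estimates of Lemma \ref{lem:main_est_Wig} to the random matrix $R_g$ by a conditioning/discretization argument, exploiting that $R_g$ depends on $G$ only through the scalar $g = \ul G$, which is stochastically dominated by $\ul M + \phi = O(1)$ (since $\norm{M} = O(1)$), and which on a high-probability event lies in a compact subset of $\C_+$. First I would record that $\im g = \eta\,\ul{GG^*} > 0$, so $g \in \C_+$ deterministically, and that $\abs{g} \prec 1$; combined with the hypothesis $\norm{R_g} = O(1)$ with high probability, there is a fixed compact set $K \subset \C_+$ (depending only on the constants in the problem) such that the event $\Omega_0 \deq \{g \in K,\ \norm{R_g} \leq C\}$ holds with high probability for a suitable constant $C$.

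The key step is a net argument. Fix a constant $\kappa > 0$ (to be sent to $0$ through the $N^\epsilon$ in $\prec$) and let $\cal N \subset K$ be a $\kappa$-net of $K$ of polynomial cardinality $\abs{\cal N} \leq N^{C'}$. For each $u \in \cal N$ the matrix $R_u = (A - u - z)^{-1}$ is \emph{deterministic} with $\norm{R_u} = O(1)$ uniformly on $K$, so Lemma \ref{lem:main_est_Wig}(ii) applies with $B = R_u$: for all deterministic $\f x, \f y \in \bb S$,
\begin{equation*}
\scalar{\f x}{R_u \Pi(G) \f y} = O_\prec\pb{(1+\phi)^6 h(\phi)^2}\,, \qquad \ul{R_u \Pi(G) B'} = O_\prec\pb{(1+\phi)^6 h(\phi)^2}
\end{equation*}
for deterministic $B'$ with $\norm{B'} = O(1)$; here I also use Lemma \ref{lem:main_est_Wig}(i) for $\Pi(G) = O_\prec((1+\phi)^3 h(\phi))$. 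By the basic properties of $\prec$ (Lemma \ref{lem:basic_properties_of_prec}(i)) and the polynomial size of $\cal N$, these bounds hold \emph{simultaneously} for all $u \in \cal N$ with high probability. On $\Omega_0$ choose $u = u(g) \in \cal N$ with $\abs{g - u} \leq \kappa$; then write the resolvent identity
\begin{equation*}
R_g - R_u = R_g (A - u - z)^{-1}\big((A-u-z) - (A-g-z)\big) R_u = (g - u) R_g R_u\,,
\end{equation*}
so that $\norm{R_g - R_u} \leq \kappa\,\norm{R_g}\norm{R_u} = O(\kappa)$ on $\Omega_0$.

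Finally I would combine these ingredients: for deterministic $\f x,\f y \in \bb S$,
\begin{equation*}
\absb{\scalar{\f x}{R_g \Pi(G)\f y}} \leq \absb{\scalar{\f x}{R_u \Pi(G)\f y}} + \norm{R_g - R_u}\,\norm{\Pi(G)}\,,
\end{equation*}
where the first term is $O_\prec((1+\phi)^6 h(\phi)^2)$ (hence a fortiori $O_\prec((1+\phi)^3 h(\phi))$) on the good event, and for the second I use the crude deterministic bound $\norm{\Pi(G)} \leq \norm{I + zG - AG} + \norm{\cal S(G)G} \leq N^{O(1)}$ together with $\norm{R_g - R_u} = O(\kappa)$; choosing $\kappa \leq N^{-D'}$ for $D'$ large makes this error negligible, while $\abs{\cal N} \leq N^{C'}$ is still polynomial. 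The same argument with $\ul{R_g\Pi(G)B'} = \ul{R_u\Pi(G)B'} + \ul{(R_g - R_u)\Pi(G)B'}$, bounding the last trace by $\norm{R_g - R_u}\norm{B'}\norm{\Pi(G)} \leq N^{-D'}N^{O(1)}\norm{B'}$, gives conclusion (ii). Since the failure of $\Omega_0$ or of the simultaneous net bounds is a probability-$O(N^{-D})$ event, the stated stochastic domination follows. The main obstacle is purely bookkeeping: one must be careful that the discretization parameter $\kappa$ can be taken polynomially small without the net size exceeding a polynomial in $N$, and that the crude a priori bound $\norm{\Pi(G)} \leq N^{O(1)}$ — valid since $\norm{G} \leq \eta^{-1} \leq N$ and $\norm{H}, \norm{A} \prec 1$ — is enough to kill the discretization error; both are routine.
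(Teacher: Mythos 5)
Your proposal takes essentially the same net-discretization argument as the paper: discretize the high-probability range of $g$ into polynomially many deterministic points $u$, apply Lemma \ref{lem:main_est_Wig} at each net point (with $R_u$ absorbed into the deterministic test vector or the matrix $B$), take a union bound, and bridge the gap $\norm{R_g - R_u} = O(\kappa)$ using the trivial bound $\norm{\Pi(G)} = N^{O(1)}$. One small writeup slip: the weak-operator bound for $\scalar{\f x}{R_u\Pi(G)\f y}$ follows from Lemma \ref{lem:main_est_Wig}(i) applied to the deterministic vector $R_u^*\f x$ and therefore reads $O_\prec\pb{(1+\phi)^3 h(\phi)}$, not $O_\prec\pb{(1+\phi)^6 h(\phi)^2}$ as in your display — part (ii) only gives the scalar trace bound, not a weak-operator bound — but since conclusion (i) requires exactly $(1+\phi)^3 h(\phi)$, this does not affect the argument.
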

\begin{proof}
Let $\Omega$ an event of high probability such that $\ind{\Omega} \norm{R_g} = O(1)$, and define the set $\cal U \deq \h{g \equiv g(H) \col H \in \Omega} \subset \C$. Since $\abs{g} \leq N$, we find that $\cal U$ is contained in the ball of radius $N$ around the origin. Let $\hat {\cal U}$ be an $N^{-3}$ net of $\cal U$, i.e.\ a set $\hat {\cal U} \subset \cal U$ such that $\abs{\hat {\cal U}} = O(N^8)$ and for each $ u  \in \cal U$ there exists a $ \hat u  \in \hat {\cal U}$ such that $\abs{ u - \hat u } \leq N^{-3}$.

By a union bound, from Lemma \ref{lem:main_est_Wig} (i) it follows that for any $\f v , \f w \in \bb S$
\begin{equation} \label{sup_s_est}
\sup_{ \hat u  \in \hat {\cal U}} \abs{(R_{ \hat u } \Pi(G))_{\f v \f w}} \prec (1 + \phi)^3 h(\phi)\,.
\end{equation}
Writing
\begin{equation*}
\abs{(R_g \Pi(G))_{\f v \f w}} \leq \abs{(R_{\hat g} \Pi(G))_{\f v \f w}} + \absb{\pb{(R_{\hat g} - R_g) \Pi(G)}_{\f v \f w}}
\end{equation*}
and estimating the first term by $O_\prec((1 + \phi)^3 h(\phi))$ using \eqref{sup_s_est} and the second term by $\norm{R_{\hat g} - R_g} \norm{\Pi(G)} = O(\abs{\hat g - g} \norm{\Pi(G)}) = O(N^{-1})$ (since $\norm{\Pi(G)} = O(N^2)$ trivially), we conclude that $R_g \Pi(G) = O_\prec((1 + \phi)^3 h(\phi))$. Here we used that $N^{-1/2} = O(h(\phi))$ by Lemma \ref{lem:m_eta}. The quantity $\ul{B R_g \Pi(G)}$ is estimated analogously. This concludes the proof.
\end{proof}

\subsection{Proof of the local law}

We begin by estimating $g - m$ for large $\eta$.

\begin{lemma} \label{lem:large_eta}
Under the assumptions of Theorem \ref{thm:Wigner}, we have $g - m = O_\prec((N \eta)^{-1})$ for $\eta \geq 2$.
\end{lemma}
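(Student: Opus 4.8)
The plan is to obtain this large-$\eta$ bound directly from Lemma~\ref{lem:main_est_Wig}, Lemma~\ref{lem:Rg_est}, and the elementary stability of the scalar equation $\pi(\,\cdot\,,z)=0$ in the regime $\eta\ge 2$; no bootstrap iteration is needed here. Fix $z\in\f S$ with $\eta\ge 2$. First I would collect the trivial deterministic bounds available in this regime: since $\im G=\eta\,GG^*\ge 0$, taking the normalised trace gives $\im g\ge 0$, whence $\im(g+z)=\im g+\eta\ge 2$ and therefore $\norm{R_g}\le(\im g+\eta)^{-1}\le 1/2$; the same computation yields $\norm{M}=\norm{R_m}\le 1/2$, $\norm{G}\le\eta^{-1}\le 1/2$, and hence $\im m=\ul{\im M}\le\norm{\im M}\le\norm M\le 1/2$.

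Next I would supply the a priori input needed to invoke Lemma~\ref{lem:Rg_est} (via Lemma~\ref{lem:main_est_Wig}). Take the control parameter $\phi\equiv 1$ (a constant, hence in $[N^{-1},N^\delta]$ for large $N$ and trivially nonincreasing in $\eta$). Then $\norm{G-M}\le\norm G+\norm M\le 1$, so $G-M=O_\prec(1)=O_\prec(\phi)$ holds automatically, uniformly in the test vectors. Since $\norm{R_g}\le 1/2=O(1)$ holds with probability one, Lemma~\ref{lem:Rg_est} applies with $B=I$, and recalling $h(\phi)^2=(\im m+\phi)/(N\eta)$ it gives
\[
\ul{R_g\,\Pi(G)}=O_\prec\big((1+\phi)^6 h(\phi)^2\big)=O_\prec\big((N\eta)^{-1}\big),
\]
where in the last step $\im m\le 1/2$ and $\phi=1$ make all $\phi$-dependent prefactors $O(1)$. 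By the identity~\eqref{s-m_main}, this reads $\pi(g)=-\ul{R_g\,\Pi(G)}=O_\prec\big((N\eta)^{-1}\big)$.

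It remains to convert this into a bound on $g-m$. Using $\pi(m)=0$ together with $\pi(u)=u-\int\frac{\nu(\dd a)}{a-u-z}$, a direct computation gives
\[
\pi(g)=\pi(g)-\pi(m)=(g-m)\Big(1-\int\frac{\nu(\dd a)}{(a-g-z)(a-m-z)}\Big).
\]
For $\eta\ge 2$ one has $\abs{a-g-z}\ge\im(g+z)\ge 2$ and $\abs{a-m-z}\ge 2$, so the integral has modulus at most $1/4$ and the bracketed factor has modulus at least $3/4$; hence $\abs{g-m}\le\tfrac43\abs{\pi(g)}=O_\prec\big((N\eta)^{-1}\big)$, which is the claim.

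Regarding difficulty: this lemma is soft, precisely because for $\eta\ge 2$ all resolvents and all self-consistent quantities stay a distance $\ge 2$ from their singularities, so neither the probabilistic step nor the stability step presents a genuine obstacle. The only mild points to be careful about are checking that the constant function $\phi\equiv 1$ is an admissible control parameter for which $G-M=O_\prec(\phi)$ holds for free, and noting that the randomness of $R_g$ is absorbed exactly by Lemma~\ref{lem:Rg_est} so that Lemma~\ref{lem:main_est_Wig}(ii) can be used with $B=I$.
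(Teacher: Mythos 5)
Your proof is correct and follows essentially the same route as the paper's: use $\|G\|,\|R_g\|,\im m=O(1)$ for $\eta\ge 2$ to get $G-M=O_\prec(1)$, apply Lemma~\ref{lem:Rg_est} together with \eqref{s-m_main} to deduce $\pi(g)=O_\prec((N\eta)^{-1})$, and then use the deterministic stability estimate $|a-g-z|\,|a-m-z|\ge 4$ to convert this into $g-m=O_\prec((N\eta)^{-1})$. The only cosmetic difference is that you track slightly sharper constants and spell out the admissibility of $\phi\equiv 1$ as a control parameter, which the paper leaves implicit.
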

\begin{proof}
For $\eta \geq 1$ we have $\norm{G} \leq 1$ and therefore $G - M = O_\prec(1)$. Moreover, we have $\norm{R_g} \leq 1$ and $\im m \leq 1$.
From \eqref{s-m_main}, $\pi(m) = 0$, and Lemma \ref{lem:Rg_est} we therefore find
\begin{equation*}
g - m = (g - m) \int  \frac{\nu(\dd a)}{(a - g - z) (a - m - z)} + O_\prec\pbb{\frac{1}{N \eta}}\,.
\end{equation*}
For $\eta \geq 2$ we have $\abs{a - g - z} \abs{a - m - z} \geq \im (a - g - z) \im (a - m - z) \geq 4$, and the claim follows since $\nu$ is a probability measure.
\end{proof}

\begin{lemma} \label{lem:G-M_iter}
Suppose that at $z \in \f D$ we have $\norm{M} = O(1)$, $G - M = O_\prec(N^\delta)$, and $g - m = O_\prec(\theta)$ for some control parameter $\theta \leq N^{-\delta}$. Then we have at $z$
\begin{equation*}
G - M = O_\prec(\theta + \psi)\,, \qquad \psi \deq \sqrt{\frac{\im m}{N \eta}} + \frac{1}{N \eta}\,.
\end{equation*}
\end{lemma}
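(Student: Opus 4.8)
The plan is to run the exact identity \eqref{G-M_main}, $G - M = R_g - R_m - R_g \Pi(G)$, together with the resolvent identity $R_g - R_m = (g-m)R_g R_m$ (which follows from $R_u^{-1} = A - u - z$), and to control both error terms by $\theta + \psi$ in the weak operator sense. The term $R_g - R_m$ is handled first: since $\norm{R_m} = \norm{M} = O(1)$ and $g - m = O_\prec(\theta)$ with $\theta \le N^{-\delta}$, the Neumann series $R_g = R_m\pb{I - (g-m)R_m}^{-1}$ converges on the high-probability event $\h{\abs{g-m}\norm{R_m} < 1/2}$, so $\norm{R_g} = O(1)$ with high probability and hence $R_g - R_m = (g-m)R_g R_m = O_\prec(\theta)$ in operator norm, a fortiori in the weak operator sense. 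In particular the hypotheses of Lemma \ref{lem:Rg_est} are met, so whenever $G - M = O_\prec(\phi)$ at $z$ for a deterministic $\phi \in [N^{-1},N^\delta]$ we get $R_g \Pi(G) = O_\prec\pb{(1+\phi)^3 h(\phi)}$, and therefore
\[ G - M = O_\prec\pb{\theta + (1+\phi)^3 h(\phi)}\,. \]

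I would exploit this in two stages. \emph{Stage one (rough bound).} Feed in the a priori hypothesis $\phi = N^\delta$. Using $\im m = \ul{\im M} \le \norm{\im M} \le \norm{M} = O(1)$ and $N\eta \ge N^\tau$ on $\f D$, one has $h(N^\delta) = O\pb{\sqrt{N^\delta/(N\eta)}} = O\pb{N^{(\delta-\tau)/2}}$, hence $(1+N^\delta)^3 h(N^\delta) = O\pb{N^{3\delta + (\delta-\tau)/2}} = O(N^{-3\tau/20})$ since $\delta = \tau/10$. Combined with $\theta \le N^{-\delta}$ this gives the preliminary bound $G - M = O_\prec(N^{-\delta})$, now a small \emph{negative} power of $N$.

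\emph{Stage two (self-improving iteration).} Fix any deterministic $\phi$ with $\theta + \psi \le \phi \le N^{-\delta}$. Then $(1+\phi)^3 = O(1)$, and
\[ h(\phi) = \sqrt{\frac{\im m + \phi}{N\eta}} \le \sqrt{\frac{\im m}{N\eta}} + \sqrt{\frac{\phi}{N\eta}} \le \psi + \sqrt{\phi}\,\sqrt{\frac{1}{N\eta}} \le \psi + \sqrt{\phi\psi}\,, \]
where we used $\sqrt{\im m/(N\eta)} \le \psi$ and $1/(N\eta) \le \psi$. Since $\phi \ge \theta + \psi$ we also have $\theta \le \sqrt{\phi\theta}$ and $\psi \le \sqrt{\phi\psi}$, so the displayed estimate above becomes the self-improving implication
\[ G - M = O_\prec(\phi) \quad\Longrightarrow\quad G - M = O_\prec\pB{\phi^{1/2}(\theta+\psi)^{1/2}}\,. \]
Starting from the stage-one bound $\phi = N^{-\delta}$ (so no larger value of $\phi$ ever occurs), an iteration of this implication analogous to Lemma \ref{lem:self-improv} (with $q = 1/2$ and $Y = \theta + \psi$) yields $G - M = O_\prec(\theta + \psi)$, which is the assertion.

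The main point requiring care is the prefactor $(1+\phi)^3$ inherited from Theorem \ref{thm:main_general}: it is $O(1)$ — hence harmless for the contraction — only once $\phi \le 1$, which is precisely why the preliminary rough step bringing $\phi$ down from $N^\delta$ to a negative power of $N$ cannot be skipped, and why the stable exponent $q=1/2$ only works below that scale. A secondary technical point is the replacement of the random resolvent $R_g$ by deterministic ones inside the application of Theorem \ref{thm:main_general}; this is exactly the net argument packaged in Lemma \ref{lem:Rg_est}, which is available here once $\norm{R_g} = O(1)$ with high probability has been established.
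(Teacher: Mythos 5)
Your proof is correct and follows the same route as the paper's: write $G - M = R_g R_m (g-m) - R_g \Pi(G)$, control $\norm{R_g}$ via the Neumann series, feed in Lemma \ref{lem:Rg_est} to get the self-improving implication $G - M = O_\prec(\phi) \Rightarrow G - M = O_\prec(\theta + (1+\phi)^3 h(\phi))$, and iterate starting from $\phi_0 = N^\delta$. Where the paper compresses the iteration into a single sentence ("by the definition of $\prec$, we easily conclude the claim after a bounded number of iterations", noting $\delta = \tau/10$ guarantees $\phi_1 = O(1)$), you make the bookkeeping explicit by separating a one-step crunch from $N^\delta$ down to $N^{-\delta}$ (where the exponent arithmetic $3\delta + (\delta - \tau)/2 = -3\tau/20 < -\delta$ uses $\delta = \tau/10$ exactly as the paper remarks) and then verifying a clean contraction exponent $q = 1/2$ below that scale so that Lemma \ref{lem:self-improv} applies with $Y = \theta + \psi$. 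That is a helpful expansion of the same argument rather than a different one.
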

\begin{proof}
From the resolvent identity $R_g = R_m + R_g R_m (g - m)$ and the assumptions $\abs{g - m} \prec N^{-\delta}$ and $\norm{R_m}=\norm{M} = O(1)$, we conclude that $\norm{R_g} = O(1)$ with high probability.

Now suppose that $G - M = O_\prec(\phi)$ for some control parameter $\phi$. Then from \eqref{G-M_main} and Lemma \ref{lem:Rg_est} we find
\begin{equation*}
G - M = R_g R_m (g - m) + O_\prec \pb{(1 + \phi)^3 h(\phi)}\,.
\end{equation*}
Using that $\norm{R_g} = O(1)$ with high probability, we deduce the implication
\begin{equation} \label{self-improv}
G - M = O_\prec(\phi) \qquad \Longrightarrow \qquad G - M = O_\prec\pb{\theta + (1 + \phi)^3 h(\phi)}\,.
\end{equation}
The implication \eqref{self-improv} is a self-improving bound, which we iterate to obtain successively better bounds on $G - M$. The iteration gives rise to a sequence $\phi_0, \phi_1, \dots$, where $\phi_0 \deq N^\delta$ and $\phi_{k+1} \deq \theta + (1 + \phi_k)^3 h(\phi_k)$. By the definition of $\prec$, we easily conclude the claim after a bounded number of iterations. Note that here we used the fact that $\delta = \tau/10$ in order to make sure that $\phi_1 = O(1)$, and for Theorem \ref{thm:main_general}, and thus Lemma \ref{lem:main_est_Wig}, to hold. See also Lemma \ref{lem:self-improv} and its proof.
\end{proof}

Lemma \ref{lem:G-M_iter} provides a bound on $G - M$ starting from a bound on $g - m$ and a rough bound on $G - M$. The needed bound on $g - m$ is obtained from a stochastic continuity argument, which propagates the bound on $g - m$ from large values of $\eta$ to small values of $\eta$. It makes use of the following notion of stability of the equation \eqref{eq_spectr_A}.

\begin{definition}[Stability] \label{def:stability}
Let $\f S \subset \f D$ be a spectral domain. We say that \eqref{eq_spectr_A} is \emph{stable on $\f S$} if the following holds for some constant $C > 0$. Suppose that the line $\cal L \deq \{E\} \times [a,b]$ is a subset of $\f S$. Let $\xi \col \cal L \to [N^{-1}, N^{-\delta/2}]$ be continuous such that $\eta \mapsto \xi(E + \ii \eta)$ is decreasing on $[a,b]$. Suppose that
$\abs{\pi(g)} \leq \xi$
for all $z \in \cal L$, and
\begin{equation} \label{stab2}
\abs{g - m} \leq \frac{C \xi}{\im m + \sqrt{\xi}}
\end{equation}
for $z = E + \ii b$. Then \eqref{stab2} holds for all $z \in \cal L$.
\end{definition}

For the following we fix $E \in \R$, and establish \eqref{G-M_Wig} and \eqref{g-m_Wig} for $z \in (\{E\} \times \R) \cap \f S = \bigcup_{k = 0}^K \cal L_k$, where $\cal L_0 \deq (\{E\} \times [2,\infty))\cap \f S$ and $\cal L_k \deq (\{E\} \times [2 N^{-\delta k}, 2 N^{- \delta (k-1)}]) \cap \f S$ for $k = 1, \dots, K$. Here $K \leq 1/\delta$.

The stochastic continuity argument is an induction on $k$.
We start with $k = 0$. By Lemmas \ref{lem:large_eta} and \ref{lem:G-M_iter} with $\theta = (N \eta)^{-1}$ we have \eqref{G-M_Wig} and \eqref{g-m_Wig} for $z \in \cal L_0$, since $G - M = O_\prec(1)$ trivially.

The induction step follows from the following result.

\begin{lemma}
Fix $k = 1, \dots, K$ and suppose that \eqref{G-M_Wig} and \eqref{g-m_Wig} hold for all $z \in \cal L_{k - 1}$. Then \eqref{G-M_Wig} and \eqref{g-m_Wig} hold for all $z \in \cal L_{k}$.
\end{lemma}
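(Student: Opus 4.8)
The plan is to prove the lemma by a \emph{stochastic continuity argument} in $\eta$, descending within $\cal L_k$ from its upper endpoint $z_0$. Since the lower endpoint of $\cal L_{k-1}$ coincides with the upper endpoint of $\cal L_k$, we have $z_0 \in \cal L_{k-1}$, so \eqref{G-M_Wig} and \eqref{g-m_Wig} hold at $z_0$ by hypothesis. The basic deterministic input is that on $\cal L_k \subset \f D$ one has $\eta \geq N^{-1+\tau}$, and hence the Lipschitz bound $\norm{\partial_z G} = \norm{G^2} \leq \eta^{-2} \leq N^2$, together with analogous polynomial-in-$N$ Lipschitz bounds in $z$ for $M$, $g$, $m$, $R_g$ and $\pi(g)$. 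First I would fix $E$, discretize the segment $\{E\}\times[\,\im z, 2N^{-\delta(k-1)}\,]$ into a grid $z_0, z_1, \dots, z_L$ of spacing $N^{-10}$ (so $L = O(N^{10})$), prove the sharp bounds \eqref{G-M_Wig} and \eqref{g-m_Wig} afresh at each grid point, and then transport them to the whole of $\cal L_k$ via the Lipschitz bounds, at a cost of an additive $O(N^{-8})$, which is negligible against every error parameter below (all of size $\geq N^{-1}$). Passing to a grid is what prevents the accumulation of stochastic-domination errors: each grid-point bound is derived independently, so the domination is reset at every point, and the polynomially many grid points cost only a worse exponent $D$ in a union bound, not a worse $\epsilon$.

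The heart of the matter is a descending induction over the grid: assuming \eqref{G-M_Wig} and \eqref{g-m_Wig} at $z_0, \dots, z_{l-1}$, I would establish them at $z_l$. Transporting the sharp bounds at $z_{l-1}$ via the Lipschitz estimates furnishes the rough a priori bounds $G - M = O_\prec(N^{-\delta})$ and $g - m = O_\prec(N^{-\delta})$ at $z_l$, and by continuity on the whole segment $\cal L$ from $z_0$ down to $z_l$. From the resolvent identity $R_g = R_m + R_g R_m(g-m)$, the bound $\norm{R_m} = \norm{M} = O(1)$, and $g - m = O_\prec(N^{-\delta})$, one obtains $\norm{R_g} = O(1)$ with high probability on $\cal L$, so Lemma \ref{lem:Rg_est} applies. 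Combining \eqref{s-m_main} with Lemma \ref{lem:Rg_est} and Lemma \ref{lem:main_est_Wig} (ii), and using an a priori bound $G - M = O_\prec(\phi)$ (initially $\phi = N^{-\delta}$), one gets $\abs{\pi(g)} = \abs{\ul{R_g \Pi(G)}} = O_\prec\pb{(1+\phi)^6 h(\phi)^2}$ on $\cal L$. Since, for small $\epsilon$ and large $N$, the function $\xi \deq N^\epsilon(1+\phi)^6 h(\phi)^2$ is continuous, decreasing in $\eta$ (because $\im m/\eta$ and $\phi/\eta$ are) and takes values in $[N^{-1}, N^{-\delta/2}]$, we conclude $\abs{\pi(g)} \leq \xi$ on $\cal L$ with high probability. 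As \eqref{g-m_Wig} at $z_0$ provides the hypothesis \eqref{stab2} at the upper endpoint of $\cal L$, the stability hypothesis (Definition \ref{def:stability}) now forces \eqref{stab2} on all of $\cal L$; in particular $g - m = O_\prec(\theta)$ at $z_l$ with $\theta \deq C\xi/(\im m + \sqrt{\xi})$ --- replaced, if necessary, by a decreasing majorant --- which is a control parameter satisfying $\theta \leq N^{-\delta}$.

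Feeding $g - m = O_\prec(\theta)$, $\theta \leq N^{-\delta}$, and the rough bound $G - M = O_\prec(N^\delta)$ into Lemma \ref{lem:G-M_iter} gives the improved bound $G - M = O_\prec(\theta + \psi)$ at $z_l$. Re-running the $\pi(g)$-estimate and the stability step of the previous paragraph with the smaller a priori bound $\phi = \theta + \psi$ sharpens $\xi$, hence $\theta$, hence (via Lemma \ref{lem:G-M_iter}) $G - M$; since the square roots in $h(\cdot)$ and in \eqref{stab2} halve the discrepancy (on the logarithmic scale) at each round, a bounded number of rounds --- exactly as in Lemma \ref{lem:self-improv} --- drives $\phi$ down to $O_\prec(\psi)$ and $\theta$ down to $O_\prec((N\eta)^{-1})$. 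Here one checks, by distinguishing the cases $\im m \geq (N\eta)^{-1}$ and $\im m < (N\eta)^{-1}$ and using $\psi = \sqrt{\im m/(N\eta)} + (N\eta)^{-1}$ (together with Lemma \ref{lem:m_eta}), that $C\xi/(\im m + \sqrt{\xi}) = O_\prec((N\eta)^{-1})$ once $\phi = \psi$. This is exactly \eqref{G-M_Wig} and \eqref{g-m_Wig} at $z_l$, closing the induction; extending from the grid to all of $\cal L_k$ via the Lipschitz bounds completes the proof.

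The step I expect to be the main obstacle is untangling the circular dependence between the two bounds: controlling $G - M$ through Lemma \ref{lem:G-M_iter} (hence Theorem \ref{thm:main_general}) presupposes a bound on $g - m$, hence on $R_g$, whereas controlling $g - m$ through $\pi(g) = -\ul{R_g \Pi(G)}$ presupposes bounds on $G - M$ and on $\norm{R_g}$. The continuity descent is precisely what breaks this circle: one step down in $\eta$ degrades the sharp bounds already in hand by only a negligible Lipschitz amount, which is enough to re-run the self-improving inputs --- Lemma \ref{lem:G-M_iter} and the deterministic stability of Definition \ref{def:stability} --- and recover sharpness at the finer scale, without accumulation. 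The rest is bookkeeping: matching the deterministic envelope $\xi$ required in Definition \ref{def:stability} with the output of Lemma \ref{lem:main_est_Wig}, checking that $\theta$ is a genuine control parameter and that $\xi$ satisfies the hypotheses of Definition \ref{def:stability}, and verifying that the finitely many bootstrap rounds terminate at the optimal bounds \eqref{G-M_Wig} and \eqref{g-m_Wig}.
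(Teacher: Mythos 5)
Your proposal is correct and follows the same overall strategy as the paper: a descending stochastic-continuity argument driven by the interplay between Lemma \ref{lem:G-M_iter}, Lemma \ref{lem:Rg_est}, the identity $\pi(g) = -\ul{R_g\Pi(G)}$, and the deterministic stability of Definition \ref{def:stability}. The differences are in organization rather than substance. The paper does \emph{not} descend grid-point-by-grid-point within $\cal L_k$; instead, having chosen the scales so that each $\cal L_k$ spans an $\eta$-ratio of only $N^\delta$, it gets the uniform a priori bound $G=O_\prec(N^\delta)$ on all of $\cal L_k$ in a single stroke from the monotonicity of the resolvent at $z=b_k$, obtains $\abs{\pi(g)}\leq (N\eta)^{-1/2}$ on $\cal L_k$ via an $N^{-3}$-net, invokes stability once with $\xi = (N\eta)^{-1/2}$ to reach the explicit intermediate bound $\abs{g-m}\prec (N\eta)^{-1/4}$, and then iterates the cleaner implication $\abs{g-m}\prec\theta\Rightarrow\abs{g-m}\prec (N\eta)^{-1} + \sqrt{\theta/(N\eta)}$ a bounded number of times. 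Your version trades this two-phase structure for a single bootstrap $\phi\mapsto\xi\mapsto\theta\mapsto\phi$ run point-by-point on a fine $N^{-10}$-grid, using Lipschitz transport from the previous grid point rather than resolvent monotonicity to seed the a priori bounds; in exchange you must check at every round that $\xi$ stays within $[N^{-1},N^{-\delta/2}]$ and is decreasing, and that $\theta\leq N^{-\delta}$, whereas the paper's explicit choices make this bookkeeping lighter. One small advantage of your route is that the Lipschitz-transported bound $g-m=O_\prec(N^{-\delta})$ on the segment immediately yields $\norm{R_g}=O(1)$ with high probability via $R_g = R_m + R_gR_m(g-m)$ \emph{before} the first application of Lemma \ref{lem:Rg_est}, which is a hypothesis the paper does not spell out at the analogous $\phi = N^\delta$ step; the paper only justifies it explicitly in the iteration phase. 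Your remark that the $\prec$-exponent is reset at each grid point is the right way to handle the apparent compounding: the bootstrap at $z_l$ only needs the bound at $z_{l-1}$ with one fixed small $\epsilon$ (since $\psi\leq CN^{-\tau/2}\ll N^{-\delta}$), and it then returns the full family of $\epsilon$'s; the $O(N^{10})$ grid points only cost an exponent in the union bound.
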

\begin{proof}
Denote by $b_k \deq E + \ii 2 N^{-\delta (k-1)}$ the upper edge of the line $\cal L_k$.
First we note that by a simple monotonicity of the resolvent (see e.g.\ \cite[Lemma 10.2]{BK16}), the estimate $G = O_\prec(1)$ for $z  = b_k$ implies $G = O_\prec(N^\delta)$ for $z \in \cal L_{k}$. Setting $\phi \deq N^\delta$ in Lemma \ref{lem:Rg_est}, we find from \eqref{s-m_main} that $\abs{\pi(g)} \prec N^{7 \delta} (N \eta)^{-1}$ for all $z \in \cal L_k$. Using that $\pi(g)$ is $N^2$-Lipschitz in $\f D$, we find using a simple $N^{-3}$-net argument that with high probability we have $\abs{\pi(g)} \leq (N \eta)^{-1/2}$ for all $z \in \cal L_k$; here we also used that $N^{8 \delta} (N \eta)^{-1} \leq (N \eta)^{-1/2}$ by definition of $\delta$. Moreover, from \eqref{g-m_Wig}, we find that with high probability we have $\abs{g - m} \leq (N \eta)^{-1/2}$ for $z = b_k$. From Definition \ref{def:stability} we therefore deduce that
\begin{equation} \label{g-m_init}
\abs{g - m} \prec (N \eta)^{-1/4} \qquad \text{for all } z \in \cal L_k
\end{equation}

Next, suppose that $\theta \leq N^{-\delta}$ is a control parameter and $\abs{g - m} \prec \theta$ for $z \in \cal L_k$. In particular, from $R_g = R_m + R_g R_m (g - m)$ we deduce that $\norm{R_g} = O(1)$ with high probability for all $z \in \cal L_k$. From Lemma \ref{lem:G-M_iter} we deduce that $G - M = O_\prec(\theta + \psi)$. By Lemma \ref{lem:Rg_est} with $\phi = \theta + \psi$ and \eqref{s-m_main} we therefore have
\begin{equation} \label{F_est_induction}
\abs{\pi(g)} \prec \xi \deq \frac{\im m}{N \eta} + \frac{1}{(N \eta)^2} + \frac{\theta}{N \eta}\,.
\end{equation}
It is easy to verify that $\xi$ is a control parameter.
Moreover, by the induction assumption we have
\begin{equation*}
\abs{g - m} \prec \frac{1}{N \eta}  \leq \frac{C \xi}{\im m + \sqrt{\xi}}\,.
\end{equation*}
From an $N^{-3}$-net argument on $\cal L_k$ analogous to the one given in the previous paragraph, we conclude using Definition \ref{def:stability} that for all $z \in \cal L_k$ we have
\begin{equation*}
\abs{g - m} \prec \frac{\xi}{\im m + \sqrt{\xi}} \leq \frac{C}{N \eta}  + \sqrt{\frac{\theta}{N \eta}}\,.
\end{equation*}
In conclusion, for every $z \in \cal L_k$ we have the implication
\begin{equation*}
\abs{g - m} \prec \theta \qquad \Longrightarrow \qquad \abs{g - m} \prec \frac{1}{N \eta}  + \sqrt{\frac{\theta}{N \eta}}\,.
\end{equation*}
Starting from \eqref{g-m_init} and iterating this implication a bounded number of times and using the definition of $\prec$, we obtain \eqref{g-m_Wig}. See Lemma \ref{lem:self-improv} and its proof. Now \eqref{G-M_Wig} follows from Lemma \ref{lem:G-M_iter}. This concludes the proof.
\end{proof}

We have established the claim of Theorem \ref{thm:Wigner} on the whole line $(\{E\} \times \R) \cap \f S$. Since $E \in \R$ was arbitrary, this concludes the proof of Theorem \ref{thm:Wigner}.

\appendix 
\section{Proof of Lemma \ref{lem:cumulant_expansion}} \label{A}
In this section we prove the cumulant expansion formula with the remainder term \eqref{R_l+1}. We start with an elementary inequality.
\begin{lemma} \label{jeson'}
	Let $X$ be a nonnegative random variable with finite moments. Then for any $a,b,t \ge 0$, we have
	\begin{equation*}
	\E X^a \E \big[X^b \mathbf{1}_{X>t}\big] \le \E \big[X^{a+b}\mathbf{1}_{X>t}\big]\,.
	\end{equation*}
\end{lemma}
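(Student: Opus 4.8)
The plan is to recognize Lemma~\ref{jeson'} as a one-dimensional instance of the Chebyshev correlation inequality (the Harris/FKG inequality on $\R$): if $f,g\col[0,\infty)\to[0,\infty)$ are both nondecreasing and $X$ is a nonnegative random variable for which $\E[f(X)]$, $\E[g(X)]$, and $\E[f(X)g(X)]$ all exist, then
\begin{equation*}
\E\big[f(X)g(X)\big]\ \ge\ \E\big[f(X)\big]\,\E\big[g(X)\big]\,.
\end{equation*}
First I would apply this with $f(x)\deq x^{a}$ and $g(x)\deq x^{b}\mathbf{1}(x>t)$. Both functions are nondecreasing on $[0,\infty)$: the map $x\mapsto x^{a}$ is nondecreasing because $a\ge 0$, while $g$ vanishes on $[0,t]$ and equals the nonnegative nondecreasing function $x^{b}$ on $(t,\infty)$, so it has no downward jump and is nondecreasing as well. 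Since $X$ has finite moments and $0\le g(X)\le X^{b}$, the three expectations above are finite. The correlation inequality then reads
\begin{equation*}
\E\big[X^{a}\big]\,\E\big[X^{b}\mathbf{1}(X>t)\big]\ \le\ \E\big[f(X)g(X)\big]\ =\ \E\big[X^{a+b}\mathbf{1}(X>t)\big]\,,
\end{equation*}
which is precisely the assertion of the lemma.

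For completeness I would include the short standard proof of the correlation inequality. Let $X'$ be an independent copy of $X$. Since $f$ and $g$ are both nondecreasing, the factors $f(X)-f(X')$ and $g(X)-g(X')$ always have the same sign, so
\begin{equation*}
\big(f(X)-f(X')\big)\big(g(X)-g(X')\big)\ \ge\ 0
\end{equation*}
pointwise. Taking expectations and expanding, using that $X$ and $X'$ are i.i.d., gives $2\,\E[f(X)g(X)]-2\,\E[f(X)]\,\E[g(X)]\ge 0$, which is the desired inequality.

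The statement is elementary and there is no genuine obstacle; the only points requiring (minor) care are the monotonicity of the truncated power $x\mapsto x^{b}\mathbf{1}(x>t)$ at the truncation point and the verification that all expectations appearing are finite, both of which are immediate from the hypothesis that $X$ has finite moments.
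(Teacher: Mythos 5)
Your proof is correct, and it takes a genuinely different route from the paper's. The paper argues by a case analysis on whether $t \ge \|X\|_a$ or $t < \|X\|_a$: in the first case it simply bounds $\E X^a \le t^a$ on the truncation region, and in the second case it bounds the complementary quantity $\E X^a\, \E\big[X^b\mathbf 1_{X\le t}\big] \ge \E\big[X^{a+b}\mathbf 1_{X\le t}\big]$ and then subtracts from the unconstrained Jensen/H\"older inequality $\E X^a\,\E X^b \le \E X^{a+b}$. Your approach instead recognizes the whole statement as a single application of the Chebyshev correlation inequality for the two nondecreasing functions $f(x)=x^a$ and $g(x)=x^b\mathbf 1(x>t)$, proved via the standard independent-copy argument, which simultaneously subsumes both the truncated inequality and the unconstrained Jensen/H\"older step as special cases. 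The monotonicity check at the truncation point (the jump of $g$ at $x=t$ is upward, from $0$ to $t^b\ge 0$) and the integrability check are the only details needing care, and you addressed both. Your route is more conceptual and avoids the case split; the paper's route is a bit more hands-on but equally short. Either proof is perfectly adequate.
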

\begin{proof}
	It suffices to assume $a>0$. Let us abbreviate $\|X\|_{a}\deq (\E X^a)^{1/a}$. For $t \ge \|X\|_a$, we have
	\begin{equation} \label{1A}
	\E X^a \E \big[X^b \mathbf{1}_{X>t}\big]\le \E \big[ t^aX^b \mathbf{1}_{X>t}\big]\le \E \big[X^{a+b}\mathbf{1}_{X>t}\big]\,,
	\end{equation}
	 which is the desired result.
	For $t < \|X\|_a$, we have
	\begin{equation} \label{2A}
	\E X^a \E \big[X^b \mathbf{1}_{X\le t}\big]>\E \big[t^aX^b \mathbf{1}_{X\le t}\big] \ge \E \big[X^{a+b}\mathbf{1}_{X\le t}\big]\,.
	\end{equation} 
	 Jensen's (or H\"older's) inequality yields
	\begin{equation} \label{3A}
	\E X^a \, \E X^b \le \E X^{a+b}\,,
	\end{equation}
	and the claim follows from \eqref{2A} and \eqref{3A}, using $1 = \mathbf{1}_{X \leq t} + \mathbf{1}_{X > t}$.
\end{proof}
Let $\chi(t)\deq \log \bb E e^{\mathrm{i}t h}$. For $n \ge 1$, we have
\begin{equation*}
\partial^n_t\big(e^{\chi(t)}\big)=\partial^{n-1}_t\big(\chi'(t)e^{\chi(t)}\big)=\sum_{k=1}^n  {n-1 \choose k-1} \partial^k_{t}\big(\chi(t)\big) \partial^{n-k}_{t} \big(e^{\chi(t)}\big)\,, 
\end{equation*} 
hence
\begin{equation*} 
\bb E h^n = (-\ii)^n \partial^n_{t} e^{\chi(t)}\big{|}_{t=0}= \sum\limits_{k=1}^n {n-1 \choose k-1} \mathcal{C}_k(h) \bb E h^{n-k}\,.
\end{equation*}
For $g(h)=h^\ell$, we have
\begin{equation*} 
\E \big[h\cdot g(h)\big] =\E h^{\ell+1} = \sum\limits_{k=1}^{\ell+1} {\ell \choose k-1}\mathcal{C}_k(h) \E h^{\ell+1-k} = \sum\limits_{k=0}^{\ell} \frac{1}{k!} \mathcal{C}_{k+1}(h) \E \qb{g^{(k)}(h)}\,,
\end{equation*}
and by linearity the same relation holds for any polynomial $P$ of degree $\le \ell$:
\begin{equation} \label{A2}
\E \big[h\cdot P(h)\big]=\sum_{k=0}^{\ell} \frac{1}{k!}\cal C_{k+1}(h)\E \big[ P^{(k)}(h)\big]\,.
\end{equation}

Next, let $f$ be as in the statement of Lemma \ref{lem:cumulant_expansion}, and fix $\ell \in \N$. By Taylor expansion we can find a polynomial $P$ of degree at most $\ell$, such that for any $0 \le k \le \ell$,
\begin{equation} \label{A1}
f^{(k)}(h)=P^{(k)}(h)+\frac{1}{(\ell+1-k)!}f^{(\ell+1)}(\xi_k)h^{\ell+1-k}\,,
\end{equation}
where $\xi_k\equiv\xi_k(h)$ is a random variable taking values between 0 and $h$.

By \eqref{A2}, \eqref{A1}, homogeneity of the cumulants, and Jensen's inequality we find that the error term in \eqref{eq:cumulant_expansion} satisfies
\begin{equation} \label{A3}
\begin{aligned}
\cal R_{\ell+1}&=
\mathbb{E}\big[h\cdot f(h)\big]-\sum_{k=0}^{\ell}\frac{1}{k!}\mathcal{C}_{k+1}(h)\mathbb{E}\big[f^{(k)}(h)\big]\\
&=\mathbb{E}\big[h\cdot (f(h)-P(h))\big]-\sum_{k=0}^{\ell}\frac{1}{k!}\mathcal{C}_{k+1}(h)\mathbb{E}\big[f^{(k)}(h)-P^{(k)}(h)\big]\\
&=\frac{1}{(\ell+1)!}\E\big[f^{(\ell+1)}(\xi_0)\cdot h^{\ell+2}\big]-\sum_{k=0}^{\ell}\frac{1}{k!(\ell+1-k)!}\mathcal{C}_{k+1}(h)\mathbb{E}\big[f^{(\ell+1)}{(\xi_k)}\cdot h^{\ell+1-k}\big]\\
&\le O(1)\cdot \sum_{k=0}^{\ell+1} \E |h|^k \cdot \E\Big[\sup_{|x|\le |h|}\big|f^{(\ell+1)}(x)\big|\cdot h^{\ell+2-k} \Big]\\
&\le O(1)\cdot \E |h|^{\ell+2}\cdot \sup_{|x|\le t}\big|f^{(\ell+1)}(x)\big| + O(1)\cdot \sum_{k=0}^{\ell+1} \E |h|^k \cdot \E\Big[\sup_{|x|\le |h|}\big|f^{(\ell+1)}(x)\big|\cdot h^{\ell+2-k} \cdot \mathbf{1}_{|h|>t}\Big]\,.
\end{aligned}
\end{equation}
The desired result then follows from estimating the last term of \eqref{A3} by Cauchy-Schwarz inequality and Lemma \ref{jeson'}.

\section{The complex case} \label{sec:complex}

In this appendix we explain how to generalize our results to the complex case. We briefly explain how to conclude the proof of Theorem \ref{thm:main_general} (i); Theorem \ref{thm:main_general} (ii) follows in a similar fashion.

If $H$ is complex Hermitian, in general $\bb E H^2_{ij}$ and $\bb E |H_{ij}|^2$ are different. We define, in addition to \eqref{31},
\begin{equation}
t_{ij}\deq (1+\delta_{ij})^{-1}N\bb E H^{2}_{ij}\,,
\end{equation}
and, in addition to \eqref{eq:tilted_vectors}, for a vector $\f x=(x_i)_{i \in \bb N} \in  \bb C^N$,
\begin{equation}
\ul{\f x}^{j} = (\ul{x}^{j}_i)_{i\in \qq{N}} \,, \qquad \text{ where } \quad \ul{x}^{j}_i \deq x_i {t}_{ij}\,.
\end{equation}
Now \eqref{eg} generalizes to
\begin{equation} \label{eg2}
(\cal S(G)G)_{\f v\f w} = \mathcal J_{\f v \f w} + \mathcal K_{\f v \f w} \,, \qquad \mathcal J_{\f v \f w} \deq \frac{1}{N}\sum_{j} G_{j \overline{{\ul{\f v}^{j}}}}G_{j\f w}\,, \qquad \mathcal K_{\f v \f w} \deq \frac{1}{N}\sum_{j} G_{jj} G_{ \f v^{j}\f w}\,,
\end{equation} 
and $\cal D_{\f v\f w}$ remains the same as in \eqref{def_D}. 

Following a similar argument as in Section \ref{subsec:preliminarties_to_proofs}, we see that Proposition \ref{prop:est1} is enough to conclude Theorem \ref{thm:main_general} (i). As in \eqref{eq:proof_of_sc_equation_1}, we fix a constant $p \in \bb N$ and write
\begin{equation}\label{eq:proof_of_sc_equation_1'}
\E [|\cal D_{\f v\f w}|^{2p}]	
=\E[\cal K_{\f v\f w}\cal D_{\f v\f w}^{p-1}\overline{\cal D}_{\f v\f w}^p]
+\sum_{i,j}{v}_i\E[H_{ij}G_{j\f w}\cal D_{\f v\f w}^{p-1}\overline{\cal D}_{\f v\f w}^p]\,.
\end{equation}
We compute the second term on the right-hand side of \eqref{eq:proof_of_sc_equation_1'} using the complex cumulant expansion, given in \cite[Lemma 7.1]{HK}, which replaces the real cumulant expansion from Lemma \ref{lem:cumulant_expansion}. The result is
\begin{equation}\label{eq:proof_of_sc_equation_2'}
\E|\cal D_{\f v\f w}|^{2p}
=\E[\cal K_{\f v\f w}\cal D_{\f v\f w}^{p-1}\overline{\cal D}_{\f v\f w}^p]
+\sum_{k=1}^{\ell}\tilde{X}_k+\sum_{i,j}{v}_i  \tilde{\cal R}_{\ell+1}^{ij}\,,
\end{equation}
where $\tilde{X_k}$ and $\tilde{\cal R}^{ij}_{\ell+1}$ are defined analogously to \eqref{X_k} and \eqref{tau1}, respectively. Using the same proof, one can easily extend Lemma \ref{prop:main_estimations} (ii)--(iii) with $X_k$ and $\cal R^{ij}_{\ell+1}$ replaced by $
\tilde{X}_k$ and $\tilde{\cal R}^{ij}_{\ell+1}$, so that it remains to estimate $\E[\cal K_{\f v\f w}\cal D_{\f v\f w}^{p-1}\overline{\cal D}_{\f v\f w}^p]+\tilde{X}_1$. Using the complex cumulant expansion, we find
\begin{equation*}
\begin{aligned}
\tilde{X}_1 &= \sum_{i,j}\Big({v}_{i}\cal \E|H_{ij}^2|\,\E\big[\partial_{ji}\big(G_{j\f w}\cal D_{\f v\f w}^{p-1}\overline{\cal D}_{\f v\f w}^p\big)\big]+{v}_{i}\cal \E[H_{ij}^2]\,\E\big[\partial_{ij}\big(G_{j\f w}\cal D_{\f v\f w}^{p-1}\overline{\cal D}_{\f v\f w}^p\big)\big]\Big)(1+\delta_{ij})^{-1}\\
& = \sum_{i,j}\Big({v}_{i}\cal \E|H_{ij}^2|\,\E\big[G_{j\f w}\partial_{ji}\big(\cal D_{\f v\f w}^{p-1}\overline{\cal D}_{\f v\f w}^p\big)\big]+{v}_{i}\cal \E[H_{ij}^2]\,\E\big[G_{j\f w}\partial_{ij}\big(\cal D_{\f v\f w}^{p-1}\overline{\cal D}_{\f v\f w}^p\big)\big]\Big)(1+\delta_{ij})^{-1}\\ &\ \ \ \ -\mathbb{E}\big[(\mathcal{S}(G)G))_{\mathbf{v}\mathbf{w}}\mathcal{D}_{\mathbf{v}\mathbf{w}}^{p-1}\overline{\mathcal{D}}_{\mathbf{v}\mathbf{w}}^{p}\big]\,,
\end{aligned}
\end{equation*}	
where in the second step we used \eqref{eg2} and
\begin{equation}
\frac{\partial G_{ij}}{\partial H_{kl}}=-G_{ik}G_{lj}\,.
\end{equation}
(Here we take the conventions of \cite[Section 7]{HK} for the derivatives in the complex entries of $H$.)
Thus we have
\begin{equation}\label{eq:estimating_X_1_11}
\begin{aligned}
\E[\cal K_{\f v\f w}\cal D_{\f v\f w}^{p-1}\overline{\cal D}_{\f v\f w}^p]+\tilde{X}_1
=&\, -\E[\cal J_{\f v\f w}\cal D_{\f v\f w}^{p-1}\overline{\cal D}_{\f v\f w}^p]
+ \sum_{i,j}\Big({v}_{i}\cal \E|H_{ij}^2|\,\E\big[G_{j\f w}\partial_{ji}\big(\cal D_{\f v\f w}^{p-1}\overline{\cal D}_{\f v\f w}^p\big)\big]\\
&\ +{v}_{i}\cal \E[H_{ij}^2]\,\E\big[G_{j\f w}\partial_{ij}\big(\cal D_{\f v\f w}^{p-1}\overline{\cal D}_{\f v\f w}^p\big)\big]\Big)(1+\delta_{ij})^{-1}\,.
\end{aligned}
\end{equation}
By estimating the right-hand side of \eqref{eq:estimating_X_1_11} in a similar way as we dealt with the right-hand side of \eqref{eq:estimating_X_1_1}, we obtain $$\E[\cal K_{\f v\f w}\cal D_{\f v\f w}^{p-1}\overline{\cal D}_{\f v\f w}^p]+\tilde{X}_1 = O_\prec (\wt \zeta) \cdot\E|\cal D_{\f v\f w}|^{2p-1} +O_\prec(\wt \zeta \lambda)\cdot\E|\cal D_{\f v\f w}|^{2p-2}\,,$$
which completes the proof.

\bibliography{bibliography} 

\providecommand{\bysame}{\leavevmode\hbox to3em{\hrulefill}\thinspace}
\providecommand{\MR}{\relax\ifhmode\unskip\space\fi MR }
% \MRhref is called by the amsart/book/proc definition of \MR.
\providecommand{\MRhref}[2]{%
  \href{http://www.ams.org/mathscinet-getitem?mr=#1}{#2}
}
\providecommand{\href}[2]{#2}
\begin{thebibliography}{10}

\bibitem{AEK}
O.~Ajanki, L.~Erd\H{o}s, and T.~Kr{\"u}ger, \emph{Local eigenvalue statistics
  for random matrices with general short range correlations}, Preprint
  arXiv:1604.08188.

\bibitem{AEK1}
\bysame, \emph{Quadratic vector equations on complex upper half-plane},
  Preprint arXiv:1506.05095.

\bibitem{AEK3}
\bysame, \emph{Singularities of solutions to quadratic vector equations on
  complex upper half-plane}, Preprint arXiv:1512.03703.

\bibitem{AEK2}
\bysame, \emph{Universality for general {W}igner-type matrices}, Preprint
  arXiv:1506.05098.

\bibitem{AEK4}
\bysame, \emph{Local spectral statistics of {G}aussian matrices with correlated
  entries}, J. Stat. Phys. \textbf{163} (2016), 280--302.

\bibitem{AGZ}
G.W. Anderson, A.~Guionnet, and O.~Zeitouni, \emph{An introduction to random
  matrices}, vol. 118, Cambridge university press, 2010.

\bibitem{BES15}
Z.~Bao, L.~Erd\H{o}s, and K.~Schnelli, \emph{Local law of addition of random
  matrices on optimal scale}, Preprint arXiv:1509.07080.

\bibitem{BLS}
\bysame, \emph{Convergence rate for spectral distribution of addition of random
  matrices}, Preprint arXiv:1606.03076 (2016).

\bibitem{BKY15}
R.~Bauerschmidt, A.~Knowles, and H.-T. Yau, \emph{Local semicircle law for
  random regular graphs}, Preprint arXiv:1503.08702.

\bibitem{BK16}
F.~Benaych-Georges and A.~Knowles, \emph{Lectures on the local semicircle law
  for {W}igner matrices}, Preprint arXiv:1601.04055.

\bibitem{BEKYY}
A.~Bloemendal, L.~Erd{\H{o}}s, A.~Knowles, H.-T. Yau, and J.~Yin,
  \emph{Isotropic local laws for sample covariance and generalized {W}igner
  matrices}, Electron. J. Probab \textbf{19} (2014), 1--53.

\bibitem{KYY}
A.~Bloemendal, A.~Knowles, H.-T. Yau, and J.~Yin, \emph{On the principal
  components of sample covariance matrices}, Prob. Theor. Rel. Fields
  \textbf{164} (2016), 459--552.

\bibitem{BHY}
P.~Bourgade, J.~Huang, and H.-T. Yau, \emph{Eigenvector statistics of sparse
  random matrices}, Preprint arXiv:1609.09022.

\bibitem{BoY1}
P.~Bourgade and H.-T. Yau, \emph{The eigenvector moment flow and local quantum
  unique ergodicity}, Comm. in Math. Phys. \textbf{350} (2017), 231--278.

\bibitem{Che}
Z.~Che, \emph{Universality of random matrices with correlated entries},
  Preprint arXiv:1604.05709.

\bibitem{Kho2}
A.~Boutet de~Monvel and A.~Khorunzhy, \emph{Asymptotic distribution of smoothed
  eigenvalue density. {II}. {W}igner random matrices}, Random Oper. and Stoch.
  Equ. \textbf{7} (1999), 149--168.

\bibitem{EK}
L.~Erd\H{o}s and T.~Kr{\"u}ger, In preparation.

\bibitem{EKY2}
L.~Erd{\H{o}}s, A.~Knowles, and H.-T. Yau, \emph{Averaging fluctuations in
  resolvents of random band matrices}, Ann.\ H.\ Poincar{\'e} \textbf{14}
  (2013), 1837--1926.

\bibitem{EKYY3}
L.~Erd{\H{o}}s, A.~Knowles, H.-T. Yau, and J.~Yin, \emph{Delocalization and
  diffusion profile for random band matrices}, Comm. Math. Phys. \textbf{323}
  (2013), 367--416.

\bibitem{EKYY4}
\bysame, \emph{The local semicircle law for a general class of random
  matrices}, Electron. J. Probab \textbf{18} (2013), 1--58.

\bibitem{EKYY1}
\bysame, \emph{Spectral statistics of {E}rd{\H{o}}s-{R}\'enyi graphs {I}: Local
  semicircle law}, Ann. Prob. \textbf{41} (2013), 2279--2375.

\bibitem{ESY2}
L.~Erd{\H{o}}s, B.~Schlein, and H.-T. Yau, \emph{Local semicircle law and
  complete delocalization for {W}igner random matrices}, Comm.\ Math.\ Phys.\
  \textbf{287} (2009), 641--655.

\bibitem{EYY2}
L.~Erd{\H{o}}s, H.-T. Yau, and J.~Yin, \emph{Universality for generalized
  {W}igner matrices with {B}ernoulli distribution}, J.\ Combinatorics
  \textbf{1} (2011), no.~2, 15--85.

\bibitem{EYY1}
\bysame, \emph{Bulk universality for generalized {W}igner matrices}, Prob.\
  Theor.\ Rel.\ Fields \textbf{154} (2012), 341--407.

\bibitem{EYY3}
\bysame, \emph{Rigidity of eigenvalues of generalized {W}igner matrices}, Adv.
  Math \textbf{229} (2012), 1435--1515.

\bibitem{G01}
V.L. Girko, \emph{Theory of stochastic canonical equations}, vol.~2, Springer,
  2001.

\bibitem{HK}
Y.~He and A.~Knowles, \emph{Mesoscopic eigenvalue statistics of {W}igner
  matrices}, Preprint arXiv:1603.01499.

\bibitem{HFS}
J.W. Helton, R.R. Far, and R.~Speicher, \emph{Operator-valued semicircular
  elements: solving a quadratic matrix equation with positivity constraints},
  Int. Math. Res. Not. \textbf{2007} (2007).

\bibitem{KKP}
A.M. Khorunzhy, B.A. Khoruzhenko, and L.A. Pastur, \emph{Asymptotic properties
  of large random matrices with independent entries}, J. Math. Phys.
  \textbf{37} (1996), 5033--5060.

\bibitem{KY4}
A.~Knowles and J.~Yin, \emph{Anisotropic local laws for random matrices},
  Preprint arXiv:1410.3516.

\bibitem{KY2}
A.~Knowles and J.~Yin, \emph{The isotropic semicircle law and deformation of
  {W}igner matrices}, Comm. Pure Appl. Math. \textbf{66} (2013), 1663--1749.

\bibitem{LSY}
J.~O. Lee and K.~Schnelli, \emph{Edge universality for deformed {W}igner
  matrices}, Rev. Math. Phys. \textbf{27} (2015), 1550018.

\bibitem{LS1}
J.O. Lee and K.~Schnelli, \emph{Local law and {T}racy-{W}idom limit for sparse
  random matrices}, Preprint arXiv:1605.08767.

\bibitem{LeeSch}
\bysame, \emph{Local deformed semicircle law and complete delocalization for
  {W}igner matrices with random potential}, J. of Math. Phys. \textbf{54}
  (2013), 103504.

\bibitem{LSSY}
J.O. Lee, K.~Schnelli, B.~Stetler, and H.-T. Yau, \emph{Bulk universality for
  deformed {W}igner matrices}, Ann. Prob. \textbf{44} (2016), 2349--2425.

\bibitem{Meh}
M.~L. Mehta, \emph{Random matrices}, Academic press, 2004.

\bibitem{PS11}
L.A. Pastur and M.~Shcherbina, \emph{Eigenvalue distribution of large random
  matrices}, vol. 171, Amer. Math. Soc., 2011.

\bibitem{PY}
N.~S. Pillai and J.~Yin, \emph{Universality of covariance matrices}, Ann. Appl.
  Probab. \textbf{24} (2014), 935--1001.

\bibitem{VDN}
D.~V. Voiculescu, K.~J. Dykema, and A.~Nica, \emph{Free random variables: A
  noncommutative probability approach to free products with applications to
  random matrices, operator algebras and harmonic analysis on free groups},
  vol.~1, American Mathematical Society, 1992.

\bibitem{Wig}
E.P. Wigner, \emph{Characteristic vectors of bordered matrices with infinite
  dimensions}, Ann. Math. \textbf{62} (1955), 548--564.

\end{thebibliography}

\bibliographystyle{amsplain}

\end{document}